\pgfplotsset{compat=1.6}
\theoremstyle{plain}%
\newtheorem{theorem}{Theorem}[section]
\newtheorem{lemma}[theorem]{Lemma}
\newtheorem{proposition}[theorem]{Proposition}
\newtheorem*{conjecture*}{Conjecture}
 \numberwithin{equation}{section}
\theoremstyle{definition}
\newtheorem{definition}[theorem]{Definition}
\theoremstyle{remark}
\newtheorem{remark}[theorem]{Remark}
\let \leq \leqslant
\let \geq \geqslant
\DeclareMathOperator{\tr}{tr}
\DeclareMathOperator{\sgn}{sgn} 
\DeclareMathOperator{\Cov}{Cov} 
\DeclareMathOperator{\support}{supp} 
\DeclareMathOperator{\image}{Image} 
\DeclareMathOperator{\dist}{dist}
\title{Decay of correlations in finite Abelian lattice gauge theories}
\author{Malin P. Forsstr\"om}
\address[Malin P. Forsstr\"om]{Department of Mathematics, KTH Royal Institute of Technology, 100 44 Stockholm, Sweden.}
\email{malinpf@kth.se}
\begin{document}

\begin{abstract}
    In this paper, we study lattice gauge theory on \( \mathbb{Z}^4 \) with finite Abelian structure group. When the inverse coupling strength is sufficiently large, we give an upper bound on the decay of correlations of local functions and compute the leading order term for both the expected value of the spin at a given plaquette as well as for the two-point correlation function. Moreover, we give an upper bound on the dependency of the size of the box on which the model is defined. The results in this paper extend and refine results by Chatterjee and Borgs.
\end{abstract}

 \maketitle

\section{Introduction}

\subsection{Background}
 
Gauge theories are crucial tools in modern physics. For instance, they are used to formulate the Standard Model.
These quantum field theories describe how different types of elementary particles interact. Even though such models have proved to be very successful in physics, they are not mathematically well-defined. This problem was considered important enough to be chosen as one of the Millenium Problems by the Clay Mathematics Institute~\cite{jw}.

Euclidean lattice gauge theories, with underlying structure group given by e.g. \( U(1)\), \( SU(2) \) or \( SU(3) \), appear as natural and well-defined discretizations of gauge theories on hyper-cubic lattices~\cite{wilson}. These discrete models have been proven to be very useful as tools to study the corresponding quantum field theories using e.g.\ simulations, high temperature expansions and low temperature expansions \cite{k1979}. However, there is also hope that one would be able to take a scaling limit and in this way obtain a rigorously defined continuum gauge theory.
As a first step in this direction, it is often instructive to try to understand relevant properties of slightly simpler models of the same type. 
The decay of correlations is an important property to try to understand in any model in statistical physics. For lattice gauge theories, this type of property is given further relevance due its connection with the mass gap problem in Yang-Mills theories. 
This is the main motivation for the current paper, where we study the decay of correlations in Abelian lattice gauge theories. 

While lattice gauge theories with a finite Abelian structure group are not of known direct physical significance in the context of the Standard Model, they provide toy models for development of tools and ideas which can later be generalized to more physically relevant models. For this reason, they have been studied in the physics literature, see, e.g., \cite{frolich-spencer82, b1984, mp1979} and the references therein, as well as in the mathematical literature, see e.g.\ \cite{c2018b, flv2020, b1984}.

\subsection{Lattice gauge theories with Wilson action}

The lattice $\mathbb{Z}^4$ has a vertex at each point in $\mathbb{R}^4$ with integer coordinates, and an edge between nearest neighbors, oriented in the positive direction, so that there are exactly four positively oriented edges emerging from each vertex \( x \), denoted by \( dx_i , \,i=1,\ldots,4 \). We will let \( -dx_i \) denote the edge with the same end points as \( dx_i \) but with opposite orientation.
Each pair \( dx_i \) and \( dx_j \) of directed edges defines an \emph{oriented plaquette} \( dx_i \wedge dx_j \). If \( i<j \), we say that the plaquette \( dx_i \wedge dx_j \) is positively oriented, and if \( i >j \), we say that the plaquette   \( dx_i \wedge dx_j = - dx_j \wedge dx_i \) is negatively oriented. 

Given real numbers \( a_1<b_1 \), \( a_2<b_2 \), \( a_3<b_3 \) and \( a_4<b_4 \), we say that \( B = \bigl( [a_1,b_1] \times [a_2,b_2] \times [a_3,b_3] \times [a_4,b_4] \bigr) \cap \mathbb{Z}^4 \) is a box. When \( B \) is a box, we write $E_B$ for the set of (positively and negatively) oriented edges both of whose endpoints are  contained in $B$, and $P_B$ for set of the oriented plaquettes whose edges are all contained in $E_B$. We will often write $e$ and $p$ for elements of $E_B$ and $P_B$, respectively. 

In this paper we will always assume that a finite and Abelian group \( G \) has been given. This group will be referred to as the \emph{structure group}.
We let $\Sigma_{E_B}$ be the set of \( G \)-valued 1-forms on  $E_B$, i.e., the set of functions \( \sigma \colon E_B \to G \) with the property that \( \sigma(e) = -\sigma(-e) \). Whenever \( \sigma \in \Sigma_{E_B} \) and \( e \in E_B \), we write \( \sigma_e \coloneqq \sigma(e) \).
Each element $\sigma  \in \Sigma_{E_B}$ induces a configuration \( d\sigma \) on \( P_B \) by assigning
\begin{equation}\label{p-e}
(d\sigma)_p:= \sigma_{e_1}+\sigma_{e_2}+\sigma_{e_3} +\sigma_{e_4}, \quad p \in  P_B,
\end{equation}
where $e_1,e_2, e_3, e_4$ are the edges in the boundary \( \partial p\) of $ p$, directed according to the orientation of the plaquette \( p \), see Section~\ref{sec: plaquettes}. The set of all configurations on \( P_B \) which arise in this way will be denoted by \( \Sigma_{P_B} \).

Next, we let \( \rho \) be a faithful, irreducible and unitary representation of $G$. 
With \( G \) and \(\rho\) fixed, we define the \emph{Wilson action} by
\begin{equation}\label{eq: Wilson action action}
    S(\sigma) 
    \coloneqq - \sum_{p \in P_B}  \Re  \tr \rho\bigl((d\sigma)_p\bigl), \quad \sigma \in \Sigma_{E_B}.
\end{equation} 
Letting \( \mu_H \) denote the uniform measure on \( \Sigma_{E_B}\) and fixing some \( \beta \geq 0 \), we obtain an associated probability measure $\mu_{B,\beta}$ on $\Sigma_{E_B}$ by weighting \( \mu_H \) by the Wilson action:
\begin{equation}\label{eq: Wilson action}
    \mu_{B,\beta}(\sigma) \coloneqq Z^{-1}_{B,\beta} e^{-\beta S(\sigma)} \,  \mu_H(\sigma), \quad \sigma \in \Sigma_{E_B},
\end{equation} 
where \( Z_{B,\beta} \) is a constant that ensures that \( \mu_{B,\beta} \) is a probability measure. The probability measure \( \mu_{B,\beta} \) describes lattice gauge theory on \( B \) with structure group \( G \), representation \( \rho \), coupling parameter \( \beta \) and \emph{free boundary conditions}.
We let \( \mathbb{E}_{B,\beta} \) denote expectation with respect to \( \mu_{B,\beta} \).

\subsection{A distance between sets}
Let \( B \) and \( B' \) be two boxes in \( \mathbb{Z}^4 \) with \( B' \subseteq B \).
In all of our results, we need a measure of the distance between sets of plaquettes \( P_1,P_2 \subseteq P_B \). To be able to define such a measure, we now introduce the following graph.
Given \( \omega \in \Sigma_{P_B} \) and \( \omega' \in \Sigma_{P_{B'}} \), let \( \mathcal{G}(\omega,\omega') \) be the graph with vertex set \(  \support \omega  \cup \support \omega' \), and an edge between two distinct plaquettes \( p_1,p_2 \in \support \omega \cup \support \omega'\) if \( p_1 \) and \( \pm p_2 \) are both in the boundary of some common 3-cell (see also Definition~\ref{definition: graph definition}). 
For distinct \( p_1,p_2 \in P_B \), let
\begin{equation}\label{eq: definition of distance between points}
    \begin{split}
        \dist_{B,B'}(p_1,p_2) \coloneqq  \frac{1}{2} \min \Bigl\{ |\support \omega| + |\support \omega'| \colon \omega\in \Sigma_{P_B}, \, \omega'\in \Sigma_{P_{B'}} \text{ s.t. } p_1 \text{ and } p_2 & \\\text{are in the same  connected component of } \mathcal{G}\bigl(\omega,\omega'\bigr) &\Bigr\},
    \end{split}
\end{equation}
for \( p \in P_B \), let \( \dist_{B,B'}(p,p) \coloneqq 0 \),
and for sets \( P_1,P_2 \subseteq P_B \),  let
\begin{equation}\label{eq: definition of distance}
    \begin{split}
        \dist_{B,B'}(P_1,P_2) \coloneqq \min_{p_1 \in P_1,\, p_2 \in P_2} \dist_{B,B'}(p_1,p_2).
    \end{split}
\end{equation} 
When \( B' = B \), we write \( \dist_{B} \) instead of \( \dist_{B,B} \).
We mention that for any two distinct plaquettes \( p_1 \) and \( p_2 \), one can show that \( \dist_{B,B'}(p_1,p_2) \) is bounded from above and below by some constant times the graph distance (in the lattice \( \mathbb{Z}^4 \)) between the corners of \( p_1 \) and the corners of \( p_2 \).

\subsection{Preliminary notation}
To simplify notation, for \( \beta \geq 0 \) and \( g \in G \), we let 
\begin{equation}\label{eq: phi def}
    \phi_\beta(g) \coloneqq \frac{e^{\beta \Re \tr \rho(g)}}{e^{\beta \Re \tr \rho(0)}},
\end{equation} 
and
\begin{equation*}
    \alpha(\beta) \coloneqq   \sum_{g \in G\smallsetminus \{ 0 \}} \phi_\beta(g)^2.
\end{equation*} 
The function \( \alpha(\beta) \) will be used to express upper bounds on error terms in our main results. We mention that for any finite Abelian group \( G \) with a faithful representation \( \rho \), there are constants \( C> 0\) and \( \xi>0 \) such that \( \alpha(\beta) \leq C e^{- \beta \xi} \). In other words, \( \alpha(\beta) \) decays exponentially in \( \beta \).

Next, for \( \beta \geq 0 \) such that \(  30 \alpha(\beta) < 1 \), we define
     \begin{equation}\label{eq: C1}
        C_1(\beta) \coloneqq   \frac{20 }{15^2 (1 - 5\alpha(\beta))^2} \biggl[
            1
            + 
            \frac{2 }{1-30 \alpha(\beta)  }
            \biggr] ,
    \end{equation}
    and
    \begin{equation}\label{eq: C2}
        C_2 \coloneqq  30.
    \end{equation}  
    We note that as \( \beta \rightarrow \infty \), \( C_1(\beta) \searrow 4/9 \).
    
    When \( P \) is a set of plaquettes, we let \( \delta P \) denote the set of all plaquettes in \( P \) which shares a 3-cell with some plaquette which does not belong to \( P \).

\subsection{Main results}
 
In several recent papers, the expected value of  Wilson loop observables have been rigorously analyzed with probabilistic techniques, for different structure groups~\cite{c2018b,cao2020,flv2020,gs2021}. The Wilson loop is an important observable in lattice gauge theories because it is believed to be related to the energy required to separate a pair of quarks~\cite{wilson,c2016}. Another important observable is the spin-spin-correlation function, which is thought to be related to the so-called mass gap of the model~\cite{c2016}. In the first three main results of this paper, we study variants of this function in the low-temperature regime, by giving results which describe the decay of correlations of local functions.  The first of these results is the following theorem. To give the statement, when \( \omega \in \Sigma_{P_B} \) and \( P \subseteq P_B \) and \( P = -P \), we let \( \omega|_{P} \coloneqq (\omega_p \cdot \mathbb{1}_{p \in P})\) denote the restriction of \( \omega \) to \( P \) in the natural way (see also Section~\ref{section: restrictions}).

\begin{theorem}\label{theorem: correlation length}
    Let \( B \) be a box in \( \mathbb{Z}^4 \), and let  \( \beta \geq 0 \) be such that \(  30 \alpha(\beta) < 1 \). Further, let \( f_1,f_2 \colon \Sigma_{P_B} \to \mathbb{C} \) and assume that there are disjoint sets \( P_1,P_2 \subseteq P_B \) such that for all \( \omega \in \Sigma_{P_B} \) we have \( f_1(\omega ) = f_1(\omega|_{P_1})\) and \( f_2(\omega) = f_2(\omega|_{P_2}) \). Then, if \( \sigma \sim \mu_{B,\beta} \), we have
    \begin{equation}\label{eq: correlation length ii}
        \Bigl| \Cov \bigl( f_1( d\sigma),f_2 (d\sigma) \bigr)\Bigr| \leq  C_1  \| f_1\|_\infty \, \|f_2 \|_\infty \bigl( C_2 \alpha(\beta) \bigr)^{ \dist_{B}(P_1,P_2) } ,
    \end{equation} 
    where \( C_1 = C_1(\beta)  \)  and \( C_2 \) are given by in~\eqref{eq: C1} and~\eqref{eq: C2} respectively.
    In particular, if \( p_1 \in P_B \) and \( p_2 \in P_B \) are distinct, then 
    \begin{equation}\label{eq: correlation length}
        \Bigl| \Cov \bigl( \tr \rho( (d\sigma)_{p_1}),\tr \rho ((d\sigma)_{p_2}) \bigr)\Bigr| \leq  C_1 (\dim \rho )^2\bigl( C_2 \alpha(\beta) \bigr)^{ \dist_{B}( p_1 , p_2 ) } .
    \end{equation}  
\end{theorem}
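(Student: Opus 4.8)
The plan is to set up a high-temperature-style polymer/cluster expansion in the variable $d\sigma$, treating $\alpha(\beta)$ as the small parameter. Since $\beta$ large corresponds to the weights $\phi_\beta(g)$ being small for $g \neq 0$, the natural ``activity'' of a plaquette configuration $\omega \in \Sigma_{P_B}$ is $\prod_{p \in \support\omega}\phi_\beta(\omega_p)$, and the partition function $Z_{B,\beta}$ (after normalising by $e^{\beta\Re\tr\rho(0)|P_B|}$) becomes a sum over $\omega$ of these activities, restricted to those $\omega$ that are of the form $d\sigma$. The key structural fact, which I would isolate as a lemma, is that $\Sigma_{P_B}$ is exactly the kernel of the coboundary map $d\colon \Sigma_{P_B}\to\Sigma_{3\text{-cells}}$, i.e.\ a configuration $\omega$ on plaquettes is $d\sigma$ for some $\sigma$ iff $d\omega = 0$ on every $3$-cell; this is the relevant cohomological statement on the (simply connected, in the right sense) box $B$. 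Consequently the constraint ``$\omega$ is realisable'' is local, expressed through the graph $\mathcal{G}$ from the introduction: the $3$-cell relations only couple plaquettes that share a $3$-cell, which is exactly the adjacency in $\mathcal{G}$.

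The main estimate will come from a standard but careful argument: write the covariance as
\[
\Cov(f_1(d\sigma), f_2(d\sigma)) = \mathbb{E}[f_1 f_2] - \mathbb{E}[f_1]\mathbb{E}[f_2],
\]
expand each expectation as a ratio of sums over $\omega \in \Sigma_{P_B}$ weighted by activities, and then use the cluster-expansion machinery to cancel all contributions from clusters that do not simultaneously touch $P_1$ and $P_2$. A connected cluster contributing to the non-cancelling part must contain a connected subgraph of $\mathcal{G}$ joining a plaquette of $P_1$ to a plaquette of $P_2$; by definition of $\dist_B$, any such subgraph has support of size at least $2\dist_B(P_1,P_2)$, and each plaquette in the support contributes a factor bounded by $\max_{g\ne 0}\phi_\beta(g) \le \alpha(\beta)^{1/2}$ (here one uses $\phi_\beta(g)\le 1$ and the definition of $\alpha$). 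Summing over all such connected animals with a Kotecký--Preiss type criterion — the combinatorial factor for the number of connected subgraphs of given size in $\mathcal{G}$ is at most $C_2^{\text{size}}$ with $C_2 = 30$, reflecting the bounded degree of $\mathcal{G}$ — produces the geometric bound $(C_2\alpha(\beta))^{\dist_B(P_1,P_2)}$, with the prefactor $C_1(\beta)$ absorbing the convergence constants of the expansion (the $(1-5\alpha)^{-2}$ and $(1-30\alpha)^{-1}$ factors in~\eqref{eq: C1} are exactly what one expects from bounding geometric series of cluster weights, and the condition $30\alpha(\beta)<1$ is the convergence threshold). The $\|f_1\|_\infty\|f_2\|_\infty$ factors come out because $f_i$ depends only on $\omega|_{P_i}$, so it is constant on the fibres we are resumming over.

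The hard part will be making the cluster expansion rigorous in the presence of the \emph{global} linear constraint defining $\Sigma_{P_B}$ — unlike an ordinary lattice spin system, the configurations here are not a product space but a subgroup, so one cannot naively factor activities over connected components of $\support\omega$. The resolution I would pursue is to quotient out by the image of $d$ (i.e.\ sum over $\sigma$ up to gauge transformations, equivalently parametrise $\Sigma_{P_B}$ by a spanning structure), or alternatively to use the Fourier/character dual description of $\mu_{B,\beta}$ in which the constraint becomes a sum over ``magnetic'' defect configurations supported on co-closed sets — this is the Abelian duality that underlies Chatterjee's and Borgs's earlier work, and it is where the exponent $\dist_B$ with its particular geometric meaning (minimising $|\support\omega|+|\support\omega'|$) naturally appears. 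Once the correct dual variables are in place, the locality of the interaction is manifest and the rest is the bounded-degree cluster-expansion bookkeeping sketched above. For the special case~\eqref{eq: correlation length}, one simply takes $f_i = \tr\rho(\cdot_{p_i})$, notes $\|f_i\|_\infty \le \dim\rho$ since $\rho$ is unitary, and applies~\eqref{eq: correlation length ii} with $P_i = \{p_i, -p_i\}$, observing $\dist_B(\{p_1,-p_1\},\{p_2,-p_2\}) = \dist_B(p_1,p_2)$.
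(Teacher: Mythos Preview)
Your proposal takes a genuinely different route from the paper, and the hardest step in your route is left unresolved.

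The paper does \emph{not} use a cluster expansion. Instead it uses a two-copy coupling argument: draw independent $\omega,\omega'\sim\mu_{B,\beta}$ and observe (Lemma~\ref{lemma: disconnected expectations}) that on the event $\{P_1\nleftrightarrow P_2\text{ in }\mathcal{G}(\omega,\omega')\}$ one can swap the configuration beyond the ``reach'' of $P_1$ between the two copies without changing the law, so that
\[
\mathbb{E}\bigl[f_1(\omega)f_2(\omega)\,\mathbb{1}[P_1\nleftrightarrow P_2]\bigr]
=\mathbb{E}\bigl[f_1(\omega)f_2(\omega')\,\mathbb{1}[P_1\nleftrightarrow P_2]\bigr].
\]
This immediately gives $|\Cov(f_1,f_2)|\le 2\|f_1\|_\infty\|f_2\|_\infty\,\mu\times\mu(P_1\leftrightarrow P_2)$ (Lemma~\ref{lemma: covariance and chains}). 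The rest is a Peierls-type bound on the connection probability (Proposition~\ref{proposition: probability of connected vs probability of path}): if $P_1\leftrightarrow P_2$ there is an \emph{optimal} geodesic in $\mathcal{G}(\omega,\omega')$; the number of such paths of length $m$ is at most $40\cdot 15^{m-2}$ (this, together with a factor from Proposition~\ref{proposition: vortex probability 2}, is where $C_2=30$ actually comes from, not a generic degree bound), and the probability that a given path lies in $\support\omega\cup\support\omega'$ is controlled by the activity bound in Proposition~\ref{proposition: vortex probability 2}.

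The point is that the coupling argument \emph{sidesteps} precisely the difficulty you flag: the global constraint $d\omega=0$ never needs to be unraveled into a polymer gas, because the swap in Lemma~\ref{lemma: disconnected expectations} preserves membership in $\Sigma_{P_B}$ automatically (the cluster touching $P_1$ is closed, so its complement is too). Your sketch, by contrast, pushes all the work into ``make the cluster expansion rigorous in the presence of the global linear constraint'' and then gestures at duality or gauge-fixing without carrying it out. That step is not routine --- it is the content of Borgs's earlier polymer approach --- and without it your argument does not close. A Kotecky--Preiss expansion could presumably be made to work here (and would likely recover a bound of the same shape, though not necessarily with the same constants), but the paper's coupling is both shorter and avoids the issue altogether.
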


\begin{remark}
    Since \( \alpha(\beta) \) decays exponentially in \( \beta \), Theorem~\ref{theorem: correlation length} shows that the covariance of two local functions decays exponentially both in \( \beta \) and in the distance between the corresponding sets \( P_1 \) and \( P_2 \).
\end{remark}

\begin{remark}
    We mention that if one for disjoint plaquettes \( p_1,p_2 \in P_B \) knew which configurations attained the minimum in~\eqref{eq: definition of distance between points}, then the methods used in this paper could be adapted slightly to describe the first order behaviour of \( \Cov\bigl(\tr \rho((d\sigma)_{p_1}),  \tr \rho((d\sigma)_{p_2}) \bigr) \) for sufficiently large \( \beta \).
\end{remark}

With Theorem~\ref{theorem: correlation length} at hand, it is natural to ask how the decay of the covariance in~\eqref{eq: correlation length} relates to the so-called spin-spin correlation \( \mathbb{E}_{B,\beta}\bigl[\tr \rho\bigl((d\sigma)_p\bigr) \tr\rho\bigl((d\sigma)_{p'}\bigr)\bigr] \). The next theorem, which improves upon a special case of Lemma~4.2 in~\cite{b1984}, is a first step towards answering this question.

\begin{theorem}\label{theorem: typical spin at plaquette}
    Let \( B \) be a box in \( \mathbb{Z}^4 \), and let \( \beta \geq 0 \) be such that \(  5 \alpha(\beta) < 1 \).
    Further, let \( p \in P_B \) and \( f \colon G \to \mathbb{C} \). Then, if \( \dist_{B}\bigl(\{ p \}, \delta P_B\bigr) >11 \), we have
    \begin{equation}\label{eq: spin expectation}
        \begin{split}
            &\biggl| \mathbb{E}_{B,\beta}\bigl[ f\bigl((d\sigma)_p\bigr)\bigr] - \Bigl(  f(0) +
            \sum_{e \in \partial p} \sum_{g \in G} 
            \bigl(f(g)-f(0) \bigr) \,  \phi_\beta(g)^{12} \Bigr) \biggr|
            \leq
            \frac{\bigl(5\alpha(\beta)\bigr)^{11}}{1 - 5\alpha(\beta) }  
            \, \max_{g \in G} \bigl|f(g) -f(0)\bigr|.
        \end{split}
    \end{equation} 
\end{theorem}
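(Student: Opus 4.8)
The plan is to reduce the statement to a sharp estimate on the law of the single spin $(d\sigma)_p$ under the measure that $\mu_{B,\beta}$ induces on $\Sigma_{P_B}$, and then to isolate the cheapest configurations contributing to that law. Since the Wilson action depends on $\sigma$ only through $d\sigma$, and the number of $\sigma\in\Sigma_{E_B}$ with a prescribed value of $d\sigma$ is independent of that value, the law of $d\sigma$ under $\mu_{B,\beta}$ is the probability measure $\nu_{B,\beta}$ on $\Sigma_{P_B}$ proportional to $\prod_{p'\in P_B}\phi_\beta(\omega_{p'})$; here the product runs over the (positively and negatively oriented) plaquettes, $\phi_\beta(0)=1$, and $\phi_\beta(-g)=\phi_\beta(g)$ since $\rho$ is unitary. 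Writing $f\bigl((d\sigma)_p\bigr)=f(0)+\sum_{g\in G\smallsetminus\{0\}}\bigl(f(g)-f(0)\bigr)\mathbb{1}\bigl[(d\sigma)_p=g\bigr]$ and taking expectations, the theorem becomes equivalent, after bounding $|f(g)-f(0)|$ by its maximum, to the estimate
\[
\sum_{g\in G\smallsetminus\{0\}}\Bigl|\,\mu_{B,\beta}\bigl((d\sigma)_p=g\bigr)-\sum_{e\in\partial p}\phi_\beta(g)^{12}\,\Bigr|\;\le\;\frac{\bigl(5\alpha(\beta)\bigr)^{11}}{1-5\alpha(\beta)}.
\]

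Next I would pin down the dominant configurations. For an edge $e$ whose endpoints lie in $B$ and a group element $h$, let $\omega^{e,h}\in\Sigma_{P_B}$ be $d$ of the $1$-form equal to $h$ on $e$; its support is exactly the set of plaquettes having $e$ in their boundary — six unoriented plaquettes, each carrying a spin $\pm h$, so $|\support\omega^{e,h}|=12$ — and its $\nu_{B,\beta}$-weight is therefore $\phi_\beta(h)^{12}$. The boundary $\partial p$ has four edges, and for each $e\in\partial p$ there is exactly one $h$ with $(\omega^{e,h})_p=g$; thus these four configurations are precisely the configurations with $(\,\cdot\,)_p=g$ of smallest support, and they contribute $\sum_{e\in\partial p}\phi_\beta(g)^{12}$ to the numerator $\sum_{\omega\colon\omega_p=g}\nu_{B,\beta}(\omega)$. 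A short geometric lemma is required at this point: any nonzero $\omega\in\Sigma_{P_B}$ whose support does not meet $\delta P_B$ has $|\support\omega|\ge 12$, with equality only for the $\omega^{e,h}$. The hypothesis $\dist_{B}\bigl(\{p\},\delta P_B\bigr)>11$ guarantees that everything relevant near $p$ is of this ``bulk'' type and that no boundary configuration intrudes into the analysis.

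The technical heart is to control all the other configurations. Decomposing an arbitrary $\omega$ with $\omega_p=g$ as $\omega^{e,h}+\omega'$ with $(\omega')_p=0$, for a canonical choice of $e$ and $h$, the part of the sum over $\omega'$ whose support is non-adjacent to $\support\omega^{e,h}$ factorizes, and after dividing by the partition function this contribution is exactly $\sum_{e\in\partial p}\phi_\beta(g)^{12}$; in particular the large, volume-dependent parts of the numerator and denominator cancel. What remains is a sum over configurations whose connected component (in the graph $\mathcal G$) through $p$ is not one of the four elementary cocycles, together with configurations that reach $\delta P_B$ or whose remainder $\omega'$ touches the neighbourhood of the elementary cocycle. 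I would bound this by combining (i) a geometric estimate — using $\dist_{B}\bigl(\{p\},\delta P_B\bigr)>11$ — that each such configuration has large support, with (ii) a Peierls-type entropy bound on the number of connected subconfigurations of $P_B$ of a given support size that contain a fixed plaquette, the relevant combinatorial constant being $5$, which produces the factor $5\alpha(\beta)$. Summing the ensuing geometric series $\sum_{k\ge 11}\bigl(5\alpha(\beta)\bigr)^k$ and then summing over $g$ with $\sum_{g\ne 0}\phi_\beta(g)^{2k}\le\alpha(\beta)^k$ yields the displayed bound.

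I expect the main obstacle to be precisely this last step. First, one must make rigorous, uniformly in the size of $B$, the cancellation of the volume-dependent bulk contributions between numerator and denominator — essentially a switching or cluster-expansion argument. Second, one must carry out the combinatorial estimate on the residual configurations with the right constants, taking care that the four elementary cocycles around the edges of $p$, and the (support-overlapping) configurations built out of them, are handled without double counting. By contrast, the reduction in the first paragraph and the identification of the weight $\phi_\beta(g)^{12}$ of an elementary cocycle should be routine once the induced measure $\nu_{B,\beta}$ is in hand.
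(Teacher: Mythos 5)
Your proposal is correct and follows essentially the same route as the paper: reduce to the induced measure on plaquette configurations, identify the minimal vortices \( d(g\,dx_j) \) around the four edges of \( p \) as the leading contributions with activity \( \phi_\beta(g)^{12} \), extract that leading term exactly via the switching/ratio identity (the paper's Lemma~\ref{lemma: ratio}), and control everything else — larger irreducible configurations through \( p \), overlaps between the four elementary cocycles, and perturbations of them — by the Peierls-type entropy bound with combinatorial constant \( 5 \) (Proposition~\ref{proposition: vortex probability}). The steps you flag as the main obstacles (the volume cancellation and the double counting) are precisely what the paper resolves with Lemma~\ref{lemma: ratio} and the inclusion--exclusion over subsets of \( \partial p \), so your outline matches the actual argument.
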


\begin{remark}
    When \( \beta \) tends to infinity, we have \( \sum_{e \in \partial p} \sum_{g \in G}  \phi_\beta(g)^{12} \asymp \alpha(\beta)^6 \), and hence the right hand side of~\eqref{eq: spin expectation} will in general tend to zero much faster than the term \( \sum_{e \in \partial p} \sum_{g \in G} \bigl(f(g)-f(0) \bigr) \,  \phi_\beta(g)^{12} \) on the left hand side of the same equation. Consequently, Theorem~\ref{theorem: typical spin at plaquette} captures the first- and second-order behaviour of \( \mathbb{E}_{B,\beta}\bigl[ f\bigl((d\sigma)_p\bigr)\bigr] \).
\end{remark}

Combining Theorem~\ref{theorem: correlation length} and Theorem~\ref{theorem: typical spin at plaquette}, we obtain the following result as a corollary.
 
\begin{theorem}\label{theorem: correlation length ii}
    Let \( B \) be a box in \( \mathbb{Z}^4 \), and let \( \beta \geq 0 \) be such that \(  30  \alpha(\beta) < 1 \). Further, let \( p_1,p_2 \in P_B \) be distinct, and let \( f_1,f_2 \colon G \to \mathbb{C} \). Then, if \( \dist_{B}\bigl(\{ p_1,p_2 \}, \delta P_B\bigr) >11 \), we have
    \begin{equation} \label{eq: eq in last theorem}
        \begin{split}
            &\biggl| \mathbb{E}_{B,\beta}\bigl[f_1\bigl((d\sigma)_{p_1}\bigr)f_2\bigl((d\sigma)_{p_2}\bigr)\bigr] - \prod_{j\in\{1,2 \}}\Bigl( f_j(0) + \sum_{e \in \partial p}  \sum_{g \in G} \bigl(f_j(g)-f_j(0) \bigr) \, \phi_\beta(g)^{12}  \Bigr) \biggr|
            \\&\qquad\leq
            C_1 \| f_1 \|_\infty \| f_2\|_\infty \bigl( C_2 \alpha(\beta)\bigr)^{ \dist_{B}(\{ p_1 \},\{ p_2 \}) } 
            +  
            8 \|f_1\|_\infty    \|f_{2}\|_\infty  \frac{ (5 \alpha(\beta))^{11}}{1 - 5\alpha(\beta) } .
        \end{split}
    \end{equation}
    where \( C_1 = C_1(\beta)  \) and \( C_2 \),  are given by~\eqref{eq: C1}, and~\eqref{eq: C2} respectively.
\end{theorem}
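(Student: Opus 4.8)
The plan is to decompose the joint expectation into a covariance plus a product of one-plaquette expectations, bound the covariance with Theorem~\ref{theorem: correlation length}, and bound each one-plaquette expectation with Theorem~\ref{theorem: typical spin at plaquette}. Throughout let \( \sigma\sim\mu_{B,\beta} \) and, for \( j\in\{1,2\} \), write \( F_j\coloneqq f_j\bigl((d\sigma)_{p_j}\bigr) \); I would start from the identity
\[
\mathbb{E}_{B,\beta}[F_1 F_2]=\Cov(F_1,F_2)+\mathbb{E}_{B,\beta}[F_1]\,\mathbb{E}_{B,\beta}[F_2]
\]
and treat the two summands separately.

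For the covariance term, note that \( \omega\mapsto f_j(\omega_{p_j}) \) defines a function on \( \Sigma_{P_B} \) depending on \( \omega \) only through its restriction to \( P_j\coloneqq\{p_j,-p_j\} \) (since \( \omega_{-p_j}=-\omega_{p_j} \)), with sup-norm at most \( \|f_j\|_\infty \). As \( p_1\neq\pm p_2 \), the sets \( P_1 \) and \( P_2 \) are disjoint and satisfy \( P_j=-P_j \), so Theorem~\ref{theorem: correlation length} gives \( |\Cov(F_1,F_2)|\le C_1\|f_1\|_\infty\|f_2\|_\infty\bigl(C_2\alpha(\beta)\bigr)^{\dist_B(P_1,P_2)} \). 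Exactly as in the derivation of~\eqref{eq: correlation length} from~\eqref{eq: correlation length ii}, both \( \Sigma_{P_B} \) and every graph \( \mathcal{G}(\omega,\omega') \) are invariant under the reversal \( p\mapsto -p \), and \( p \) and \( -p \) are joined by an edge of \( \mathcal{G}(\omega,\omega') \) whenever both occur; hence \( \dist_B(\pm p_1,\pm p_2) \) does not depend on the two signs, so \( \dist_B(P_1,P_2)=\dist_B(\{p_1\},\{p_2\}) \). This produces the first term on the right-hand side of~\eqref{eq: eq in last theorem}.

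For the product term, set
\[
A_j\coloneqq f_j(0)+\sum_{e\in\partial p_j}\sum_{g\in G}\bigl(f_j(g)-f_j(0)\bigr)\phi_\beta(g)^{12},\qquad \varepsilon_j\coloneqq\mathbb{E}_{B,\beta}[F_j]-A_j,
\]
so that \( A_1 A_2 \) is precisely the product subtracted in~\eqref{eq: eq in last theorem}. Since \( \dist_B(\{p_1,p_2\},\delta P_B)>11 \) gives \( \dist_B(\{p_j\},\delta P_B)>11 \) for each \( j \), and \( 30\alpha(\beta)<1 \) gives \( 5\alpha(\beta)<1 \), Theorem~\ref{theorem: typical spin at plaquette} applied to \( p_1 \) and to \( p_2 \), together with \( \max_{g\in G}|f_j(g)-f_j(0)|\le 2\|f_j\|_\infty \), yields \( |\varepsilon_j|\le 2\|f_j\|_\infty\,t \), where \( t\coloneqq(5\alpha(\beta))^{11}/(1-5\alpha(\beta)) \). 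Writing \( A_j=\mathbb{E}_{B,\beta}[F_j]-\varepsilon_j \), the identity
\[
\mathbb{E}_{B,\beta}[F_1]\,\mathbb{E}_{B,\beta}[F_2]-A_1 A_2=\varepsilon_1\,\mathbb{E}_{B,\beta}[F_2]+\varepsilon_2\,\mathbb{E}_{B,\beta}[F_1]-\varepsilon_1\varepsilon_2
\]
combined with \( |\mathbb{E}_{B,\beta}[F_j]|\le\|f_j\|_\infty \) gives a bound of \( 4\|f_1\|_\infty\|f_2\|_\infty\,t(1+t) \). Since \( 30\alpha(\beta)<1 \) forces \( 5\alpha(\beta)<1/6 \), hence \( t<1 \), this is at most \( 8\|f_1\|_\infty\|f_2\|_\infty\,t \); adding it to the covariance bound via the triangle inequality yields~\eqref{eq: eq in last theorem}.

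This is essentially a bookkeeping argument and I do not expect a genuine obstacle. The two points that require attention are the identity \( \dist_B(P_1,P_2)=\dist_B(\{p_1\},\{p_2\}) \), which forces one to record the reversal-invariance of the graphs \( \mathcal{G}(\omega,\omega') \) (this is needed, not optional, because \( \dist_B(P_1,P_2)\le\dist_B(\{p_1\},\{p_2\}) \) holds trivially, so only equality yields the claimed exponent), and the use of the stronger hypothesis \( 30\alpha(\beta)<1 \) rather than merely \( 5\alpha(\beta)<1 \) in order to absorb the quadratic term \( \varepsilon_1\varepsilon_2 \) into the linear error --- which costs nothing, since the covariance estimate of Theorem~\ref{theorem: correlation length} already requires \( 30\alpha(\beta)<1 \).
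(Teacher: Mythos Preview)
Your proposal is correct and follows essentially the same approach as the paper: decompose into covariance plus product of expectations, bound the covariance by Theorem~\ref{theorem: correlation length}, and control the product term via Theorem~\ref{theorem: typical spin at plaquette}. The only cosmetic difference is the algebraic identity used for the product: the paper writes \( xy-ab=(x-a)(y-b)+a(y-b)+b(x-a) \) with \( a,b \) the approximations \( A_j \), whereas you expand around the expectations \( \mathbb{E}_{B,\beta}[F_j] \); your version is arguably cleaner since \( |\mathbb{E}_{B,\beta}[F_j]|\le\|f_j\|_\infty \) is immediate, while bounding \( |A_j| \) requires a short extra remark. Your explicit justification of \( \dist_B(\{p_1,-p_1\},\{p_2,-p_2\})=\dist_B(\{p_1\},\{p_2\}) \) is also a point the paper leaves implicit.
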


\begin{remark}
For \( N \geq 1 \), let  \( B_N \) be the box \( [-N,N]^4 \cap \mathbb{Z}^4\).
By applying Ginibre's inequality~\cite{g1970}, one can show that whenever \( f \) is a real-valued function which depends only on a finite number of plaquettes, then the limit
\begin{align}\label{wilsonlimit}
\lim_{N \to \infty} \mathbb{E}_{B_N,\beta} \bigl[ f(d\sigma)\bigr]
\end{align}
exists and is translation invariant (see e.g. Section~2.3 in~\cite{flv2020}). From this result, it follows that Theorem~\ref{theorem: correlation length}, Theorem~\ref{theorem: typical spin at plaquette} and Theorem~\ref{theorem: correlation length ii} holds also in this limit.
\end{remark}

By using the same strategy as for the proof of Theorem~\ref{theorem: correlation length}, we obtain the following result, which extends Theorem~5.3~in~\cite{c2018b}. In this result, \( \dist_{TV}(X,Y) \) denotes the total variation distance between two random variables \( X \) and \( Y \).

\begin{theorem}[Compare with Theorem~5.3 in~\cite{c2018b} and Theorem~2.4 in~\cite{b1984}]\label{theorem: chatterjees correlation result}
    Let \( B \) and \( B' \) be two boxes in \( \mathbb{Z}^4 \) with \( B' \subsetneq B \), and let \( \beta \geq 0 \) be such that \(  30 \alpha(\beta) < 1 \).
    Further, let \( P\subseteq P_{B'} \), and let \( \sigma \sim \mu_{B,\beta} \) and \( \sigma'\sim \mu_{B',\beta} \).
    Then
    \begin{equation*}
        \begin{split}
            &
            \dist_{TV}\bigl((d\sigma)|_{P}, (d\sigma')|_{P}\bigr) 
            \leq  
            C_1  |P| \bigl( C_2 \alpha(\beta) \bigr)^{ \dist_{B,B'}(P,P_B\smallsetminus P_{B'}) },
        \end{split}
    \end{equation*}  
    where \( C_1 = C_1(\beta)  \) and \( C_2 \) are given by~\eqref{eq: C1}  and~\eqref{eq: C2} respectively. 
\end{theorem}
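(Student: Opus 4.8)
The plan is to deduce the estimate from a coupling argument: I would reduce the total variation distance to a union bound over the plaquettes of $P$, and then bound each term by replaying the combinatorial argument behind Theorem~\ref{theorem: correlation length}, with the role of the second local function played by the set of \emph{extra} plaquettes $P_B \smallsetminus P_{B'}$. The first observation is that this set is precisely where the two models differ: writing the Wilson weight through~\eqref{eq: phi def}, one has $\mu_{B,\beta}(\sigma) \propto \prod_{p \in P_B} \phi_\beta\bigl((d\sigma)_p\bigr)$ and $\mu_{B',\beta}(\sigma') \propto \prod_{p \in P_{B'}} \phi_\beta\bigl((d\sigma')_p\bigr)$, so the two normalised weights differ only through the factor $\prod_{p \in P_B \smallsetminus P_{B'}} \phi_\beta\bigl((d\sigma)_p\bigr)$. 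Since $\phi_\beta(0) = 1$ and $0 < \phi_\beta(g) < 1$ for $g \neq 0$, this factor is close to $1$ unless $d\sigma$ is non-trivial on plaquettes near the outer part of $B'$, so the discrepancy between $(d\sigma)|_P$ and $(d\sigma')|_P$ originates in $P_B \smallsetminus P_{B'}$ and should propagate into $P$ only with probability exponentially small in the distance.

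For the reduction I would use $\dist_{TV}(X,Y) \le \mathbb{P}(X \neq Y)$ together with $\mathbb{P}\bigl((d\sigma)|_P \neq (d\sigma')|_P\bigr) \le \sum_{p \in P} \mathbb{P}\bigl((d\sigma)_p \neq (d\sigma')_p\bigr)$, so that it suffices to build one coupling of $\mu_{B,\beta}$ and $\mu_{B',\beta}$ for which
\[
\mathbb{P}\bigl((d\sigma)_p \neq (d\sigma')_p\bigr) \le C_1(\beta) \bigl(C_2 \alpha(\beta)\bigr)^{\dist_{B,B'}(\{p\},\, P_B \smallsetminus P_{B'})} \qquad \text{for every } p \in P .
\]
Because $\dist_{B,B'}(\{p\}, P_B \smallsetminus P_{B'}) \geq \dist_{B,B'}(P, P_B \smallsetminus P_{B'})$ for each $p \in P$ by~\eqref{eq: definition of distance} and $C_2 \alpha(\beta) < 1$, summing over the $|P|$ plaquettes of $P$ gives the asserted bound; this is the origin of the factor $|P|$. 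The coupling would be the \emph{doubled} coupling underlying the proof of Theorem~\ref{theorem: correlation length} (realise $\sigma$ and $\sigma'$ on a common space by a simultaneous update dynamics, or by revealing both configurations in nested layers around $P$), arranged so that on the event $(d\sigma)_p \neq (d\sigma')_p$ the combined support $\support(d\sigma) \cup \support(d\sigma')$ must contain a connected subset of the graph $\mathcal{G}(d\sigma, d\sigma')$ linking $p$ to some plaquette of $P_B \smallsetminus P_{B'}$; on the complement of such a connection the local environments of $\sigma$ and $\sigma'$ coincide and the two can be coupled to agree at $p$.

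Bounding this probability then reproduces the estimate in Theorem~\ref{theorem: correlation length}. Expanding the weight of the pair $(d\sigma, d\sigma') \in \Sigma_{P_B} \times \Sigma_{P_{B'}}$ and discarding, by the observation above, all pairs in which $p$ is not in the same connected component of $\mathcal{G}(d\sigma, d\sigma')$ as some plaquette of $P_B \smallsetminus P_{B'}$, one is left with pairs for which, by~\eqref{eq: definition of distance between points}--\eqref{eq: definition of distance}, $|\support(d\sigma)| + |\support(d\sigma')| \geq 2\dist_{B,B'}(\{p\}, P_B \smallsetminus P_{B'})$. Each plaquette in the support contributes a factor controlled by $\alpha(\beta)$, the number of connected droplets of a given size is controlled by a fixed branching constant absorbed into $C_2 = 30$, and the partition function ratios are absorbed into $C_1(\beta)$; resumming over all admissible pairs yields the geometric series whose sum is the right-hand side of the displayed inequality.

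The step I expect to be the main obstacle is running this expansion with the two domains $B$ and $B'$ present simultaneously. In Theorem~\ref{theorem: correlation length} the two sources are symmetric local functions on one box, whereas here one source is the set $P_B \smallsetminus P_{B'}$ of plaquettes that do not exist in $B'$, which breaks the symmetry between $d\sigma$ (which may be supported there) and $d\sigma'$ (confined to $\Sigma_{P_{B'}}$). One must verify that the droplet expansion still closes and --- crucially --- that all normalising constants remain bounded uniformly in how much larger $B$ is than $B'$, in particular the ratio $Z_{B,\beta}/Z_{B',\beta}$ and the contributions of $3$-cells straddling the boundary of $B'$, so that the very same constants $C_1(\beta)$ and $C_2$ as in Theorem~\ref{theorem: correlation length} can be used. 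A further point to check is that the combined graph $\mathcal{G}(d\sigma, d\sigma')$, and hence the distance $\dist_{B,B'}$, really is the sharp object recording a discrepancy travelling from the boundary region of $B'$ into $P$, i.e.\ that it cannot be transmitted along any shorter path in $\mathbb{Z}^4$.
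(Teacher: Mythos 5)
Your overall strategy is the paper's: bound the total variation distance by the probability, under a suitable coupling of \( \mu_{B,\beta} \) and \( \mu_{B',\beta} \), that \( P \) is connected to \( P_B\smallsetminus P_{B'} \) in the graph \( \mathcal{G}(d\sigma,d\sigma') \), and then estimate that connection probability by the path-counting and activity bounds behind Theorem~\ref{theorem: correlation length} (in the paper this is exactly Lemma~\ref{lemma: total variation bound} followed by Proposition~\ref{proposition: probability of connected vs probability of path}). Two remarks. First, the one genuinely non-trivial step --- the construction of a coupling with the property that \( (d\sigma)|_P \neq (d\sigma')|_P \) forces \( P \leftrightarrow P_B\smallsetminus P_{B'} \) --- is asserted in your proposal ("simultaneous update dynamics, or revealing in nested layers") rather than carried out. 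The paper's construction is worth internalising: take \( \omega\sim\mu_{B,\beta} \) and \( \omega'\sim\mu_{B',\beta} \) \emph{independent}, let \( \hat P \) be the union of the connected components of \( \mathcal{G}(\omega,\omega') \) meeting \( P_B\smallsetminus P_{B'} \), observe that \( \hat P \) is measurable with respect to the restrictions of \( \omega,\omega' \) to a one-cell-thick enlargement \( \hat{\hat P} \) of \( \hat P \) and that \( d(\omega|_{\hat P}) = d(\omega'|_{\hat P}) = 0 \), so that conditionally on this information the laws of \( \omega|_{P_{B'}\smallsetminus\hat P} \) and \( \omega'|_{P_{B'}\smallsetminus\hat P} \) coincide; then set \( \hat\omega\coloneqq\omega \) and \( \hat\omega'\coloneqq\omega'|_{\hat P}+\omega|_{P_{B'}\smallsetminus\hat P} \). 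This makes your "agree at \( p \) unless connected" claim precise without any dynamics. Second, your closing worry about the ratio \( Z_{B,\beta}/Z_{B',\beta} \) and about uniformity in \( |B\smallsetminus B'| \) is moot: the connection probability is computed under the \emph{product} of the two separately normalised measures, and the two-box version of the activity bound (Proposition~\ref{proposition: vortex probability 2}) already handles the asymmetry between \( \Sigma_{P_B} \) and \( \Sigma_{P_{B'}} \), so no partition-function ratio ever appears. Your per-plaquette union bound over \( p\in P \) is a harmless reshuffling of where the factor \( |P| \) enters (the paper gets it from counting starting points of optimal geodesics inside Proposition~\ref{proposition: probability of connected vs probability of path}); with the coupling made explicit as above, your argument closes.
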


\begin{remark}
In contrast to Theorem~5.3 in~\cite{c2018b}, which hold only for \( G = \mathbb{Z}_2 \), Theorem~\ref{theorem: chatterjees correlation result} is valid for any finite Abelian group. Moreover, our proof can easily be adapted to work for other lattices such as \( \mathbb{Z}^n \) for \( n \geq 3 \), as well as for other actions such as the Villain action.
Moreover, we mention that even in the case of \( {G} = \mathbb{Z}_2 \), we use a completely different proof strategy than the strategy used in the corresponding proof in~\cite{c2018b}.
\end{remark}

\begin{remark}
    By~\cite{bdi1975}, a critical value for \( \beta \) in the case \( G = \mathbb{Z}_2 \) is given by \( 0.22 \). In comparison,  when \( G = \mathbb{Z}_2 \), the assumption on \( \beta \) in the above results is either that \( 5 e^{-4\beta} < 1\) (equivalently, \( \beta \geq 0.40 \)) or that \( 30 e^{-4\beta} < 1\) (equivalently, \( \beta \geq 0.8 \)).
\end{remark}

\begin{remark}
    In this paper, we always use the measure given by~\eqref{eq: Wilson action}, corresponding to free boundary conditions. However, with minor changes to the proofs in the paper, one can obtain results analogous to our main results for zero or periodic boundary conditions.
\end{remark}

\begin{remark}
    Using the theory as outlined in~\cite{cao2020}, some of the ideas in this paper might extend to finite non-Abelian structure groups as well. However, in some of the proofs, we use tools from discrete exterior calculus which are not valid in a non-Abelian setting. Consequently,  such a generalization would be non-trivial. 
\end{remark}
 
\begin{remark}
    The main novelty of this paper is the use of a coupling argument to obtain upper bounds on both the covariance of local functions and on the total variation distance. Also, we give natural extensions and generalizations of several technical but useful lemmas from~\cite{flv2020}.
\end{remark}

\subsection{Structure of the paper}

In Section~\ref{sec: preliminaries}, we give a brief summary of the foundations of discrete external calculus on hypercubic lattices, which we will use throughout the rest of this paper.
Next, in Section~\ref{sec: frustrated sets}, we give upper and lower bounds for the probability that certain plaquette configurations arise in spin configurations. 
In Section~\ref{section: paths}, we discuss the structure and properties of paths in the graph \( \mathcal{G}(\omega^{(0)},\omega^{(1)}) \) for \( \omega^{(0)},\omega^{(1)} \in \Sigma_{P_B} \).
In Section~\ref{sec: connected sets}, we define a notion of two sets being connected, and prove a lemma suggesting the usefulness of this concept.
In Section~\ref{sec: distances}, we define another measure \( \dist_B^*(p_1,p_2) \) of the distance between two plaquettes, and state and prove a lemma which gives a relationship between this function and the function \( \dist_{B,B'}(p_1,p_2) \) defined in the introduction.
These results are then used in Section~\ref{sec: probability of being connected} to give an upper bound on the probability that two sets are connected.
In Section~\ref{sec: covariance}, we give a connection between the event that two sets are connected and the covariance of local functions supported on these sets, and then use this connection to give a proof of Theorem~\ref{theorem: correlation length}.
In Section~\ref{sec: total variation}, we give a similar connection between the event that two sets are connected and the total variation distance, and then use this observation to give a proof of Theorem~\ref{theorem: chatterjees correlation result}.
Finally, in Section~\ref{sec: expected spin}, we prove Theorem~\ref{theorem: typical spin at plaquette} and Theorem~\ref{theorem: correlation length ii}.

\section{Preliminaries}\label{sec: preliminaries}

\subsection{Discrete exterior calculus}

In this section, we give a very brief overview of discrete exterior calculus on the cell complexes of \( \mathbb{Z}^n \) for \( n \in \mathbb{N} \). For a more thorough background on discrete exterior calculus, we refer the reader to~\cite{c2018b}. 

All of the results in this section are obtained under the assumption that an Abelian group \( G \), which is not necessarily finite, has been given. In particular, they all hold for \( G=\mathbb{Z} \).

\subsubsection{Oriented edges (1-cells)}

The graph $\mathbb{Z}^n$ has a vertex at each point \( x \in \mathbb{Z}^n \) with integer coordinates and an (undirected) edge between nearest neighbors. We associate to each undirected edge \( \bar e \) in \( \mathbb{Z}^n \) exactly two directed or \emph{oriented} edges \( e \) and \( -e \) with the same endpoints as \( \bar e \); $e$ is directed so that the coordinate increases when traversing $e$, and $-e$ is directed in the opposite way.

Let \( \mathbf{e}_1 \coloneqq (1,0,0,\ldots,0)\), \( \mathbf{e}_2 \coloneqq (0,1,0,\ldots, 0) \), \ldots, \( \mathbf{e}_n \coloneqq (0,\ldots,0,1) \) and let \( d{\mathbf{e}}_1 \), \ldots, \( d{\mathbf{e}}_n \)  denote the \( n \) oriented edges with one endpoint at the origin which naturally correspond to these unit vectors (oriented away from the origin). We say that an oriented edge \( e \) is \emph{positively oriented} if it is equal to a translation of one of these unit vectors, i.e., if there exists a point \( x \in \mathbb{Z}^n \) and an index \( j \in \{ 1,2, \ldots, n\} \) such that \( e = x + d{\mathbf{e}}_j \). If \( x \in \mathbb{Z}^n \) and  \( j \in \{ 1,2, \ldots, n\} \), then we let \( dx_j \coloneqq x + d{\mathbf{e}}_j  \).

Given a box \( B \), we let \( E_B \) denote the set of oriented edges whose end-points are both in \( B \), and let \( E_B^+ \) denote the set of positively oriented edges in \( E_B \).

\subsubsection{Oriented \( k \)-cells}

For any two oriented edges \( e_1 \in E_B \) and \( e_2\in E_B \), we consider the wedge product \( e_1 \wedge e_2  \) satisfying \( e_1 \wedge e_1 = 0 \) and
\begin{equation}\label{eq: reversing wedge order}
     e_1 \wedge e_2 = -(e_2 \wedge e_1) = (-e_2) \wedge e_1 = e_2 \wedge (-e_1).
\end{equation}
If \( e_1 \), \( e_2 \), \ldots, \( e_k \) are oriented edges which  do not share a common endpoint, we  set \( e_1 \wedge e_2  \wedge \cdots \wedge e_k = 0 \).

If \( e_1 \), \( e_2 \), \ldots, \( e_k \) are oriented edges and \( e_1 \wedge \cdots \wedge e_k \neq 0 \), we say that \( e_1 \wedge \cdots  \wedge e_k \) is an oriented \( k \)-cell. If there exists an \( x \in \mathbb{Z}^n \) and \( j_1 <j_2 <\cdots< j_k \) such that \( e_i = d{x}_{j_i} \), then we say that \( e_1 \wedge \cdots \wedge e_k \) is \emph{positively oriented} and that \( -( e_1 \wedge \cdots \wedge e_k) \) is \emph{negatively oriented}. Using~\eqref{eq: reversing wedge order}, this defines an orientation for all \( k \)-cells. If not stated otherwise, we will always consider $k$-cells as being oriented.

If \( A \) is a set of oriented \( k \)-cells, we let \( A^+ \) denote the set of positively oriented \( k \)-cells in \( A \).  If \( A = -A \) , then we say that \( A \) is \emph{symmetric}.

\subsubsection{Oriented plaquettes} \label{sec: plaquettes}
We will usually say \emph{oriented plaquette} instead of oriented \(2\)-cell.
If \( x \in \mathbb{Z}^n \) and \( 1 \leq j_1 <   j_2 \leq  \), then  \( p \coloneqq dx_{j_1} \wedge d{{x}}_{j_2} \) is a positively oriented plaquette, and we define 
\begin{equation*}
    \partial p \coloneqq \{ 
    dx_{j_1},  
    (d(x +  \mathbf{e}_{j_1}))_{j_2}, 
    -(d(x + \mathbf{e}_{j_2}))_{j_1}, 
    - dx_{j_2}
    \}.
\end{equation*} 
If \( e \) is an oriented edge, we let \( \hat \partial e \) denote the set of oriented plaquettes \( p \) such that \( e \in \partial p \).

We let \( P_B \) denote the set of oriented plaquettes whose edges are all in \( E_B \).

\subsubsection{Discrete differential forms}

A \( G \)-valued function \( f \) defined on a subset of the set of \( k \)-cells in \( \mathbb{Z}^n \) with the property that \( f(c) = -f(-c) \) is called a \emph{\( k \)-form}. 
If \( f \) is a \( k \)-form which takes the value \( f_{j_1,\ldots, j_k}(x) \) on \( dx_{j_1} \wedge \cdots \wedge dx_{j_k} \), it is useful to represent its values on the $k$-cells at $x \in \mathbb{Z}^n$ by the formal expression
\begin{equation*}
    f(x) = \sum_{1 \leq j_1 < \cdots < j_k\leq n} f_{ j_1,\ldots, j_k}(x) \,  dx_{j_1} \wedge \cdots \wedge dx_{j_k}.
\end{equation*}
To simplify notation, if \( c \coloneqq  dx_{j_1} \wedge \cdots \wedge dx_{j_k} \) is a \( k \)-cell and \( f \) is a \( k \)-form we often write \( f_c \) instead of \( f_{j_1,\ldots, j_k}(x) \).

Given a \( k \)-form \( f \), we let \( \support f \) denote the support of \( f \), i.e.\ the set of all oriented \( k \)-cells \( c \) such that \( f(c) \neq 0 \).

Now let \( B \) be a box, and recall that for \( k \in \{ 1,2,\ldots, n \} \), a \( k \)-cell \( c \) is said to be in \( B \) if all its corners are in \( B \). The set of \( G \)-valued \( k \)-forms with support in the set of \( k \)-cells that are contained in a \( B \) will be denoted by \( \Sigma_{B,k} \).
The set of \( G \)-valued 1-forms with support in \( E_B \) will also be denoted by \( \Sigma_{E_B} =\Sigma_{B,k}\), and will referred to as \emph{spin configurations}, and the set of \( G \)-valued 2-forms with support in \( \Sigma_{P_B} \) will be referred to as \emph{plaquette configurations}.

\subsubsection{The exterior derivative}
Given \( h \colon \mathbb{Z}^n \to G \), \( x \in \mathbb{Z}^n \), and \( i \in \{1,2, \ldots, n \} \), we let 
\begin{equation*}
    \partial_i h(x) \coloneqq h(x+\mathbf{e}_i) - h(x) .
\end{equation*}
If \( k \in \{ 0,1,2, \ldots, n-1 \} \) and \( f \) is a \( G \)-valued \( k \)-form, we define the \( (k+1) \)-form \( df \) via the formal expression
\begin{equation*}
    df(x) = \sum_{1 \leq j_1 < \cdots < j_k\leq n} \sum_{i=1}^n \partial_i f_{j_1,\ldots, j_k} (x) \, dx_i \wedge (dx_{j_1} \wedge \cdots \wedge dx_{j_k}), \quad x \in \mathbb{Z}^n.
\end{equation*}
The operator \( d \) is called the \emph{exterior derivative.}

\subsubsection{Boundary operators}\label{sec: boundary}

If \( \hat x  \in \mathbb{Z}^n \) and \( j \in \{ 1,2, \ldots, n \} \), then 
\begin{equation*}\label{equation: simplest differential form}
\begin{split}
    &d(\mathbb{1}_{x = \hat x} \, dx_j) 
    = \sum_{i=1}^n (\partial_i \mathbb{1}_{x = \hat x} )\,  dx_i \wedge dx_j
    = \sum_{i=1}^n (\mathbb{1}_{x + \mathbf{e}_i= \hat x} -  \mathbb{1}_{x = \hat x} )\,  dx_i \wedge dx_j
    \\&\qquad = \sum_{i=1}^{j-1} (\mathbb{1}_{x + \mathbf{e}_i = \hat x} -  \mathbb{1}_{x = \hat{x}} )\,  dx_i \wedge dx_j
    - \sum_{i=j+1}^{n} (\mathbb{1}_{x + \mathbf{e}_i = \hat x} -  \mathbb{1}_{x = \hat x} )\,    dx_j \wedge dx_i.
    \end{split}
\end{equation*}
Here we are writing $\mathbb{1}_{x = \hat x}$ for the Dirac delta function of $x$ with mass at $\hat x$.
From this equation, it follows that whenever \(  e = \hat{x} + d\mathbf{e}_j = d\hat x_j \) is an oriented edge and \( p \) is an oriented plaquette, we have
\begin{equation}\label{d1dxjp}
     \bigl(d(\mathbb{1}_{x = \hat x} \, dx_j )\bigr)_p =
     \begin{cases}1 &\text{if }  e \in \partial p, \cr 
     -1 &\text{if } - e \in \partial p,
     \cr 0 &\text{else.} \end{cases}
\end{equation}
 Note that this implies in particular that if \( 1 \leq j_1 < j_2 \leq n \), \( p =   dx_{j_1} \wedge dx_{j_2}  \) is a plaquette, and \( f \) is a \( 1 \)-form, then
 \begin{equation*}
     (df)_p = (df)_{j_1,j_2}(x) = \sum_{e \in \partial p}  f_e.
 \end{equation*}
Analogously, if \( k \in \{ 1,2, \ldots, n \} \) and \(  c  \) is a \( k \)-cell, we define \( \partial  c \) as the set of all \( (k-1) \)-cells \( \hat c =   d\hat x_{j_1} \wedge \cdots \wedge d\hat x_{j_{k-1}}\) such that
  \begin{equation*}
       \bigl(d(\mathbb{1}_{x = \hat x} \, d\hat x_{j_1} \wedge \dots \wedge d\hat x_{j_{k-1}}) \bigr)_{ c} = 1.
\end{equation*}
Using this notation, one can show that if \( f \) is a \( k \)-form and \( c_0 \) is a \( (k+1)\)-cell, then
\begin{equation*}
    (d f)_{c_0} = \sum_{c \in  \partial c_0} f_c.
\end{equation*}
If \( k \in \{ 1,2, \ldots, n \} \) and \( \hat c \) is a \( k \)-cell, the set \( \partial \hat c \) will be referred to as the \emph{boundary} of \( \hat c \). When \( k \in \{ 0,1,2,3, \ldots, n-1 \} \) and \( c \) is a \( k \)-cell, we also define the \emph{co-boundary} \( \hat \partial c \) of \( c \) as the set of all \( (k+1) \)-cells \( \hat c \) such that \( c \in \partial \hat c \).

Finally, when \( k \in \{ 1,2, \ldots, n \} \) and \( c \) is a \( k \)-cell, we will abuse notation and let
\begin{equation*}
    \hat \partial (\partial c) \coloneqq \bigcup_{c' \in \partial c} \hat \partial c'.
\end{equation*}
Similarly, when \( k \in \{ 0,1,2, \ldots, n-1 \} \) and \( c \) is a \( k \)-cell, we let
\begin{equation*}
     \partial (\hat \partial c) \coloneqq \bigcup_{c' \in \hat \partial c}  \partial c'.
\end{equation*}

The following lemma will be useful to us.

\begin{lemma}[The Bianchi lemma, see e.g. Lemma~2.5 in~\cite{flv2020}]\label{lemma: Bianchi}
    Let \( B \) be a box in \( \mathbb{Z}^n \) for some \( n \geq 3 \), and let \( \omega \in \Sigma_{P_B} \). Then, for any oriented  3-cell \( c \) in \( B \), we have  
    \begin{equation}\label{eq: Bianchi}
        \sum_{p \in \partial c} \omega_p = 0.
    \end{equation}
\end{lemma}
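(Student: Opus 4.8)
The statement is that for any $\omega \in \Sigma_{P_B}$ and any oriented $3$-cell $c$ in $B$, $\sum_{p \in \partial c} \omega_p = 0$. Since $\omega \in \Sigma_{P_B}$, by definition $\omega = d\sigma$ for some $1$-form $\sigma \in \Sigma_{E_B}$. The plan is therefore to reduce the identity to the fact that $d \circ d = 0$ on discrete forms, and to prove the latter directly from the definition of the exterior derivative given in the excerpt.

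Concretely, I would first argue that it suffices to treat $\omega = d\sigma$. Then the left-hand side of \eqref{eq: Bianchi} is $(d\omega)_c = (d(d\sigma))_c$ by the formula $(df)_{c_0} = \sum_{c \in \partial c_0} f_c$ applied with $f = \omega$ and $c_0 = c$. So the lemma is exactly the assertion that $(dd\sigma)_c = 0$ for every $3$-cell $c$. To prove $dd = 0$, I would work with the formal expression for $d$: for a $1$-form $\sigma$ with components $\sigma_{j}(x)$ one has $d\sigma(x) = \sum_{j} \sum_i \partial_i \sigma_j(x)\, dx_i \wedge dx_j$, and applying $d$ again gives $dd\sigma(x) = \sum_{j}\sum_i \sum_k \partial_k \partial_i \sigma_j(x)\, dx_k \wedge dx_i \wedge dx_j$. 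The key points are that the finite-difference operators commute, $\partial_k \partial_i = \partial_i \partial_k$, while the wedge product is antisymmetric, $dx_k \wedge dx_i = -(dx_i \wedge dx_k)$; pairing up the terms $(k,i)$ and $(i,k)$ for $k \ne i$ shows they cancel, and the terms with $k = i$ vanish because $dx_i \wedge dx_i = 0$. Hence every component of $dd\sigma$ is zero, which is precisely $\sum_{p \in \partial c}\omega_p = 0$ for each $3$-cell $c$.

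Alternatively, and perhaps more in keeping with the combinatorial flavour of the excerpt, one can prove it at the level of incidence numbers without invoking $\omega = d\sigma$: expand $\sum_{p \in \partial c} \omega_p$ using \eqref{d1dxjp}-type bookkeeping, and show that each oriented edge $e$ appearing in $\partial p$ for some $p \in \partial c$ appears in exactly two such boundaries, once with each sign, so that all contributions telescope. This is the standard fact that the boundary of the boundary is empty, $\partial \partial c = 0$, for the cubical complex. I would likely present the $dd = 0$ computation as the cleaner route, since the definition of $d$ is already spelled out.

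The main obstacle is essentially bookkeeping: making the index manipulation in $dd\sigma$ fully rigorous requires being careful about the ordering constraints $j_1 < \cdots < j_k$ built into the component notation, since the double sum over unordered pairs $(k,i)$ has to be reorganized into the canonically ordered basis of $3$-cells before the cancellation is visible. Concretely, for a fixed triple of distinct indices, the three cyclic shifts and three transpositions must be collected and seen to cancel in pairs using antisymmetry of $\wedge$ together with $\partial_k\partial_i = \partial_i\partial_k$. None of this is deep, but it is the only place where an error could creep in, so I would write that step out carefully and leave the rest terse.
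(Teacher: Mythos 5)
Your argument is correct: since \( \Sigma_{P_B} \) is by definition the image of \( d \), writing \( \omega = d\sigma \) and observing that \( \sum_{p\in\partial c}\omega_p = (dd\sigma)_c = 0 \) (via commutativity of the difference operators \( \partial_i \) and antisymmetry of \( \wedge \), or equivalently via \( \partial\partial c = \emptyset \)) is exactly the standard proof. The paper itself gives no proof, deferring to Lemma~2.5 of the cited reference, which uses this same \( dd=0 \) argument, so there is nothing to add.
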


\subsubsection{Boundary cells}

Let \( k \in \{ 1,2,\ldots, n \} \). 
When \( A \) is a symmetric set of \( k \)-cells, we define the boundary of \( A \) by
\begin{equation*}
    \delta A \coloneqq \{ c \in A \colon \partial \hat \partial c \not \subseteq A \}.
\end{equation*}

If \( B \) is a box and  \( e \in \delta E_B \), we say that \( e \) is a \emph{boundary edge} of \( B \).
Analogously, a plaquette \( p \in \delta P_B \) is said to be a \emph{boundary plaquette} of \( B \).
More generally, for \( k \in \{ 0,1, \ldots, n-1 \} \),  a \( k \)-cell \( c \) in \( B \) is said to be a \emph{boundary cell} of \( B \), or equivalently to be in \emph{the boundary} of \( B \), if there is a \( (k+1) \)-cell \( \hat c \in \hat \partial c \) which contains a \( k \)-cell that is not in \( B \).

\subsubsection{Closed forms}
If \( k \in \{ 1,\ldots, n-1 \} \) and \( f \) is a \( k \)-form such that \( df = 0 \), then we say that \( f \) is \emph{closed}.

\subsubsection{The Poincar\'e lemma}

\begin{lemma}[The Poincar\'e lemma, Lemma 2.2 in~\cite{c2018b}]\label{lemma: poincare}
    Let \( k \in \{ 0,1, \ldots, n-1\} \) and let \( B \) be a box in \( \mathbb{Z}^n \). Then the exterior derivative \( d \) is a surjective map from the set of \(G\)-valued \( k \)-forms with support contained in \(B\) onto the set of G-valued closed \((k+1)\)-forms with support contained in \(B\). Moreover, if \(G\) is finite and \( m \) is the number of closed \(G\)-valued \( k \)-forms with support contained in \(B\), then this map is an \(m\)-to-\(1\) correspondence. Lastly, if \( k \in \{ 0,1,2, \ldots, n-1 \} \) and \(f\) is a closed \( (k+1) \)-form that vanishes on the boundary of \(B\), then there is a \(k\)-form \( h\) that also vanishes on the boundary of \(B\) and satisfies \(dh = f\).
\end{lemma}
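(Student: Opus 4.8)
The plan is to dispatch the three assertions in turn; only the first (``closed implies exact'') requires real work. The $m$-to-$1$ count is formal once surjectivity is known: $d$ is a homomorphism of abelian groups $\Sigma_{B,k}\to\Sigma_{B,k+1}$, its image consists of closed $(k+1)$-forms because $d\circ d=0$, and its kernel is exactly the set of closed $k$-forms with support in $B$, which is finite of cardinality $m$ when $G$ is finite. Hence, assuming that every closed $(k+1)$-form with support in $B$ is hit, the preimage of any such form is a coset of this kernel and so has exactly $m$ elements, which is the claimed correspondence. It therefore remains to prove (i) surjectivity and (ii) the relative version for forms vanishing on $\partial B$.

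For (i) I would build an explicit discrete homotopy operator, using that a box is star-shaped and, concretely, that it deformation-retracts onto a corner one coordinate direction at a time. Write $B=\prod_{j=1}^{n}I_j$ with $I_j=[a_j,b_j]\cap\mathbb{Z}$, fix the corner $v_0\coloneqq(a_1,\dots,a_n)$, and let $K_j$ be the degree-lowering operator associated with collapsing $I_j$ onto its lower endpoint, given explicitly by a telescoping (``partial summation in direction $j$'') formula; set $K\coloneqq K_n\circ\cdots\circ K_1$. The core of the argument is the discrete prism formula $dK+Kd=\mathrm{id}$ on forms of degree at least $1$: I would verify it first for a single collapse $K_j$ by a direct telescoping computation, in which the boundary terms of the sum reproduce the form, and then obtain the general identity by composing the $K_j$, inducting on the number of collapsed directions. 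One also checks that $K$ maps forms with support in $B$ to forms with support in $B$, since a prism over a cell of $B$ formed by sliding toward $v_0$ stays in $B$. Given a closed $(k+1)$-form $f$ with support in $B$, the prism formula then gives $f=d(Kf)+K(df)=d(Kf)$, so $h\coloneqq Kf\in\Sigma_{B,k}$ is the required primitive; no ``constants'' obstruction intervenes because we only invert $d$ on forms of degree at least $1$, where the pullback to a point vanishes.

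For (ii) I expect the real obstacle. A workable route is to extend $f$ by zero to a box $\hat B$ obtained from $B$ by adding one layer in each direction: the hypothesis that $f$ vanishes on every boundary cell of $B$ is precisely what makes this zero-extension $\hat f$ closed on $\hat B$, since the only new closedness constraints involve $(k+2)$-cells not contained in $B$, and every face of such a cell that does lie in $B$ is a boundary cell of $B$, where $f$ vanishes. Applying (i) on $\hat B$ produces $\hat h$ with $d\hat h=\hat f$; since $\hat f$ is supported on $(k+1)$-cells of $B$ that are not boundary cells of $B$, the form $\hat h$ is closed on the complementary collar, and one then has to correct $\hat h$ by an exact form so that it vanishes on $\partial B$ without changing $d\hat h$ on the interior. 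Alternatively, one can reprove (i) with the telescoping directions chosen to preserve ``vanishing on $\partial B$'', or carry out the whole proof as a single induction on $n$ by peeling off the last coordinate, decomposing $f=f'+dx_n\wedge g$, integrating out $x_n$, and invoking the inductive hypothesis on the $(n-1)$-dimensional box $A\coloneqq\prod_{j<n}I_j$, with the base cases $n\le 2$ or a flat box done by hand. In every version, the delicate point — and where the bulk of the effort goes — is the bookkeeping of which cells count as boundary cells, for $B$, for $\hat B$, and for the intermediate boxes, so that the primitive is guaranteed to vanish on all of $\partial B$ rather than merely on part of it.
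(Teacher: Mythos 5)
First, a point of comparison: the paper does not prove this lemma at all --- it is imported verbatim as Lemma~2.2 of~\cite{c2018b} --- so there is no in-paper argument to measure you against; you are effectively re-proving a quoted result. Your treatment of the first two assertions is sound and is the standard argument: the $m$-to-$1$ count is indeed immediate from the fact that $d$ is a group homomorphism whose kernel on $k$-forms supported in $B$ is exactly the set of closed $k$-forms supported in $B$, and surjectivity via a discrete homotopy operator $K$ with $dK+Kd=\mathrm{id}$ on forms of degree at least one (built by collapsing the box one coordinate at a time, with $K$ preserving support in $B$) is correct; your remark that the degree-$\geq 1$ restriction is what removes the ``constants'' term is the right observation.

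The gap is in part (ii), and it is not merely bookkeeping. Your zero-extension step is fine (the verification that $\hat f$ is closed on $\hat B$ goes through exactly as you say), but the subsequent step --- ``correct $\hat h$ by an exact form so that it vanishes on $\partial B$'' --- is the entire content of the relative statement and cannot be waved through. The collar $\hat B\smallsetminus B^{\circ}$ on which $\hat h$ is closed is homotopy equivalent to $S^{n-1}$, so a closed $k$-form there need not be exact, and one must actually identify and kill the obstruction. For $k\leq n-2$ the relevant cohomology vanishes and the correction exists, but your sketch gives no argument for exactness on the collar nor for how to extend the primitive $g$ so that $\hat h - dg$ is supported in $B$. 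Worse, for $k=n-1$ the obstruction is $\sum_{c\subseteq B} f_c\in G$, which is \emph{not} forced to vanish by the hypotheses as stated (take $n=1$, $B=\{0,1,2\}$, $G=\mathbb{Z}$ and $f$ the indicator of a single edge: $f$ is closed, no $1$-cell is a boundary cell under the paper's definition, yet no $h$ vanishing at the endpoints satisfies $dh=f$); so your route (ii) would have to fail there, and in fact the statement as transcribed needs either the restriction $k+1\leq n-1$ or the extra hypothesis $\sum_{c\subseteq B}f_c=0$ in top degree. The robust way to prove the relative version --- and what the standard discrete arguments do --- is your ``alternative'': carry out the coordinate-by-coordinate integration from the start while tracking that each partial primitive vanishes on $\partial B$, rather than post hoc correction on a non-contractible collar. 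As written, the proposal does not establish part (ii).
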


Recall from the introduction that when \( B \) is a box in \( \mathbb{Z}^n \), we defined
\begin{equation*}
    \Sigma_{P_B} = \bigl\{ \omega \in \Sigma_{B,2} \colon \exists \sigma \in \Sigma_{E_B} \text{ such that } \omega = d\sigma \bigr\}.
\end{equation*}
From Lemma~\ref{lemma: poincare}, it follows that \( \omega \in \Sigma_{P_B} \) if and only if \( d\omega = 0 \).

\subsubsection{Restrictions of forms}\label{section: restrictions}

If \( \sigma \in \Sigma_{E_B} \), \( E \subseteq E_B \) is symmetric,  we define \( \sigma|_E \in \Sigma_{E_B} \) for \( e \in E_B \) by
\begin{equation*}
    (\sigma|_E)_e \coloneqq \begin{cases}
    \sigma_e &\text{if } e \in E  \cr 0 &\text{else.} 
    \end{cases} 
\end{equation*}
Similarly, if \( \nu \in \Sigma_{P_B} \), \( P \subseteq P_B \) is symmetric,  we define \( \omega|_P \in \Sigma_{B,2} \) for \( p \in P_B \) by
\begin{equation*}
    (\nu|_P)_p \coloneqq \begin{cases}
    \nu_p &\text{if } p \in P  \cr 0 &\text{else.}
    \end{cases}
\end{equation*}

\subsubsection{Non-trivial forms}

Let \( k \in \{ 1,2, \ldots, n \} \). A \( k \)-form \( f \) is said to be \emph{non-trivial} if it is not identically equal to zero.

\subsubsection{Irreducible forms}

Let \( B \) be a box in \( \mathbb{Z}^n \),  and let \( \omega \in \Sigma_{P_B}\).
If \( P \subseteq \support \omega\) is symmetric, and there is no symmetric set \( P_0 \subseteq P_B \) such that
\begin{enumerate}[label=(\roman*)]
    \item \( P \subseteq P_0 \subsetneq \support \omega \), and
    \item    \( \omega|_{P_0} \in \Sigma_{P_B}  \) (equivalently \( d(\omega|_{P_0}) =0 \)),
\end{enumerate}
then \( \omega \) is said to be \emph{\( P \)-irreducible}. If \( \omega \in \Sigma_{P_B}\) is \( P \)-irreducible for all non-empty, symmetric sets \( P \subseteq \support \omega \), then we say that \( \omega \) is \emph{irreducible}.
Finally, note that if \( \omega \in \Sigma_{P_B} \) is \( \emptyset \)-irreducible, then \( \omega \equiv 0 \).

\begin{lemma}\label{lemma: irreducible form relative set}
    Let \( B \) be a box in \( \mathbb{Z}^n \),  let \( \omega \in \Sigma_{P_B} \), and let \( P \subseteq \support \omega \) be non-empty and symmetric. Then there is a symmetric set \( P' \subseteq \support \omega\)  such that \( \omega|_{P'} \in \Sigma_{P_B} \) and  \( \omega|_{P'} \) is \( P \)-irreducible. 
\end{lemma}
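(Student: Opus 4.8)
The plan is to obtain $P'$ as a minimal element of a suitable finite poset. Let $\mathcal{S}$ denote the collection of all symmetric sets $Q$ with $P \subseteq Q \subseteq \support \omega$ and $\omega|_Q \in \Sigma_{P_B}$. First I would observe that $\mathcal{S}$ is non-empty: the choice $Q = \support \omega$ works, since $\omega|_{\support \omega} = \omega \in \Sigma_{P_B}$ by hypothesis, and $\support \omega$ is symmetric (as $\omega$ is a $2$-form, $\omega_p \neq 0$ iff $\omega_{-p} = -\omega_p \neq 0$). Because $P_B$ is finite, $\mathcal{S}$ is a finite poset under inclusion and hence has a minimal element; call it $P'$.

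Next I would verify that $P'$ has the required properties. By construction $P' \subseteq \support \omega$ and $\omega|_{P'} \in \Sigma_{P_B}$. Since $P' \subseteq \support \omega$, every $p \in P'$ satisfies $\omega_p \neq 0$, so $\support(\omega|_{P'}) = P'$; in particular $P \subseteq \support(\omega|_{P'})$, which is what is needed for $P$-irreducibility of $\omega|_{P'}$ to be meaningful. It then remains to show there is no symmetric $P_0 \subseteq P_B$ with $P \subseteq P_0 \subsetneq \support(\omega|_{P'}) = P'$ and $(\omega|_{P'})|_{P_0} \in \Sigma_{P_B}$. The key (elementary) point is that restriction is transitive in the sense that for $P_0 \subseteq P'$ one has $(\omega|_{P'})|_{P_0} = \omega|_{P_0}$, since both sides equal $\omega_p$ on $P_0$ and vanish off $P_0$. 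Hence such a $P_0$ would lie in $\mathcal{S}$ and satisfy $P_0 \subsetneq P'$, contradicting minimality of $P'$. Therefore $\omega|_{P'}$ is $P$-irreducible, which finishes the proof.

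I do not expect a genuine obstacle here; the only thing that needs care is the bookkeeping with the definitions — namely identifying $\support(\omega|_{P'})$ with $P'$ and checking the transitivity of restriction so that the two layers of restriction appearing in the definition of $P$-irreducibility match up. An equivalent, more hands-on route, which I would mention as an alternative, is iterated descent: put $Q_0 = \support \omega$, and as long as $\omega|_{Q_i}$ fails to be $P$-irreducible the definition supplies a symmetric $Q_{i+1}$ with $P \subseteq Q_{i+1} \subsetneq Q_i$ and $\omega|_{Q_{i+1}} \in \Sigma_{P_B}$; since $|Q_i|$ strictly decreases and remains non-negative, the process terminates, and the final set is the desired $P'$. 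Throughout, one may if convenient replace the condition ``$\omega|_Q \in \Sigma_{P_B}$'' by ``$d(\omega|_Q) = 0$'', using the characterization of $\Sigma_{P_B}$ recorded just after the Poincar\'e lemma.
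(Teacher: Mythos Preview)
Your proof is correct and follows essentially the same approach as the paper: both define the family \(\mathcal{S}\) of symmetric sets \(Q\) with \(P \subseteq Q \subseteq \support \omega\) and \(\omega|_Q \in \Sigma_{P_B}\), observe that \(\support \omega \in \mathcal{S}\), and take a minimal element of this finite poset. Your write-up is in fact more detailed than the paper's, which simply asserts that a minimal element is \(P\)-irreducible without spelling out the transitivity of restriction or the identification \(\support(\omega|_{P'}) = P'\).
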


\begin{proof}
    Consider the set \( \mathcal{S} \) of all symmetric sets \( P' \subseteq P_B \) which are such that 
    \begin{enumerate}[label=(\arabic*)] 
        \item \( P \subseteq P' \subseteq \support \omega \), and 
        \item \(  \omega|_{P'} \in \Sigma_{P_B}\). 
    \end{enumerate}
    Since  \( \support \omega \) is symmetric and \( \omega \in \Sigma_{P_B} \), we have   \(    \omega \in \mathcal{S}\), and hence \( \mathcal{S} \) is non-empty. Moreover, if we order the elements in \( \mathcal{S} \) using set inclusion, \( \mathcal{S} \) is a partially ordered set. Since \( P_B \) is finite, \( \mathcal{S} \) is finite, and hence there is a minimal element \( P' \in \mathcal{S} \). By definition, any such minimal element is \( P\)-irreducible, and hence the desired conclusion follows. 
\end{proof}

\subsubsection{The dual lattice}

The lattice \( \mathbb{Z}^n \) has a natural dual, called the \( \emph{dual lattice} \) and denoted by \( *\mathbb{Z}^n \). In this context, the lattice \( \mathbb{Z}^n \) is called the \emph{primal lattice.} 

The vertices of the dual lattice \( *\mathbb{Z}^n \) are placed at the centers of the \( n \)-cells of the primal lattice.

For \( k \in \{ 0,1,\ldots, n \} \), there is a bijection between the set of \( k \)-cells of \( \mathbb{Z}^n \) and the set of \( (n-k) \)-cells of \(*\mathbb{Z}^n \) defined as follows. For each \( x \in \mathbb{Z}^n \), let \( y \coloneqq *(dx_1 \wedge \cdots \wedge dx_n) \in *\mathbb{Z}^n\) be the point at the centre of the primal lattice \( n \)-cell \( dx_1 \wedge \cdots \wedge dx_n \).
Let \( dy_1 = y-d\mathbf{e}_1, \ldots, dy_n = y-d\mathbf{e}_n \) be the edges coming out of \( y \) in the \emph{negative} direction.
Next,  let \( k \in \{ 0,1, \ldots, n \} \) and assume that \( 1 \leq i_1 < \cdots < i_k \leq n \) are given. If $x \in \mathbb{Z}^n$, then \( c = dx_{i_1} \wedge \cdots \wedge dx_{i_k} \) is a \( k \)-cell in \( \mathbb{Z}^n \).  Let \( {j_1}, \ldots, j_{n-k} \) be any enumeration of \( \{ 1,2, \ldots, n \} \smallsetminus \{ i_1, \ldots, i_k \} \), and let \(  \sgn (i_1,\ldots, i_k, j_{1}, \ldots, j_{n-k} )\) denote the sign of the permutation that maps $(1,2,\ldots, n)$ to \( (i_1,\ldots, i_k, j_{1}, \ldots, j_{n-k} ) \). Define 
\begin{equation*}
    *(dx_{i_1} \wedge \cdots \wedge dx_{i_k}) =  \sgn (i_1,\ldots, i_k, j_{1}, \ldots, j_{n-k} )\, dy_{j_1} \wedge \cdots \wedge dy_{j_{n-k}}
\end{equation*}
and, analogously, define
\begin{equation*}
    \begin{split}
         *&(dy_{j_1} \wedge \cdots \wedge dy_{j_{n-k}}) =
        \sgn (j_{1}, \ldots, j_{n-k},i_1,\ldots, i_k)\, dx_{i_1} \wedge \cdots \wedge dx_{i_k}
        \\&\qquad =
        (-1)^{k(n-k)}  \sgn (i_1,\ldots, i_k, j_{1}, \ldots, j_{n-k} )\, dx_{i_1} \wedge \cdots \wedge dx_{i_k}.
    \end{split}
\end{equation*}

\subsubsection{Minimal non-trivial configurations}

The purpose of the next two lemmas is to describe the non-trivial plaquette configurations in \( \Sigma_{P_B} \) with smallest support.

\begin{lemma}\label{lemma: small vortices}
    Let \( B \) be a box in \( \mathbb{Z}^4 \), and let \( \omega \in \Sigma_{P_B} \). If \( \omega \neq 0 \) and the support of \( \omega \) does not contain any boundary plaquettes of \( P_B \), then either \( |\support \omega| = 12 \), or \( |\support \omega| \geq 22 \). 
\end{lemma}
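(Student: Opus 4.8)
The plan is to analyze the structure of a nonzero plaquette configuration $\omega \in \Sigma_{P_B}$ whose support avoids boundary plaquettes, using the Poincaré lemma (Lemma \ref{lemma: poincare}) to reduce to a statement about $1$-forms. Since $\support\omega$ contains no boundary plaquettes of $P_B$, the configuration $\omega$ vanishes on $\delta P_B$; by the last part of Lemma \ref{lemma: poincare}, there is a $1$-form $\sigma$ that also vanishes on the boundary of $B$ with $d\sigma = \omega$. The key point is that passing to the dual lattice $*\mathbb{Z}^4$ turns this into a statement about $3$-forms: the support of $*\sigma$ is a set of $3$-cells, and $d\sigma = \omega$ becomes (up to signs) the statement $\delta(*\sigma) = *\omega$ for the codifferential, i.e. $*\sigma$ is a $3$-chain whose boundary is the $2$-chain $*\omega$. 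More concretely, I would work directly on the primal lattice: let $\sigma$ be chosen (via Poincaré) with minimal support among all $1$-forms with $d\sigma = \omega$, and let $V := \support\sigma$ be the set of edges carrying nonzero value, with $\hat V$ the vertices they touch.

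The first main step is to show that $|\support\omega| \geq 12$ unless $\omega = 0$, and more importantly that $12$ is attained only by the ``elementary cube'' configuration. Here I would use the Bianchi lemma (Lemma \ref{lemma: Bianchi}): for every $3$-cell $c$, $\sum_{p\in\partial c}\omega_p = 0$. If $p\in\support\omega$, then $p$ is the face of two $3$-cells $c_1, c_2$ in each of the four ``transverse'' directions — wait, in $\mathbb{Z}^4$ a $2$-cell lies in the boundary of exactly $2\cdot(4-2) = 4$ distinct $3$-cells. For each such $3$-cell $c$, since $\omega_p\neq 0$, the Bianchi relation forces at least one other plaquette of $\partial c$ to be in $\support\omega$. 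A counting/propagation argument of this type — each plaquette in the support forces neighbours in several directions, and the ``frontier'' cannot close up too quickly — should yield the lower bound $12$, and an analysis of the equality case shows the support must be exactly the six faces (with both orientations, hence $12$ signed plaquettes) of a single unit $3$-cube sitting in one of the six coordinate $2$-planes' complementary... I mean, the boundary $\partial c$ of a single $3$-cell has exactly $6$ faces, each appearing with both orientations once we symmetrize, giving $|\support\omega| = 12$; and $\omega = d\sigma$ where $\sigma$ is supported on the $12$ edges of that cube is the canonical example.

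The second, harder main step is the \emph{gap}: ruling out $13 \leq |\support\omega| \leq 21$. The plan is to assume $\omega$ is irreducible (by Lemma \ref{lemma: irreducible form relative set}, decomposing $\omega$ into irreducible pieces, each still avoiding the boundary, so it suffices to treat the irreducible case — any reducible $\omega$ splits into pieces each of size $\geq 12$, giving $\geq 24 > 22$ unless one piece is everything) and then do a case analysis on the ``geometric spread'' of $\support\omega$ in the four coordinate directions. If $\support\omega$ uses only $2$ of the $4$ coordinate directions it is a closed $2$-form on a $2$-dimensional slice of $\mathbb{Z}^4$, which by Poincaré on that slice is a sum of unit squares; a nonzero such thing, being a $2$-cycle in the plane, must... actually a closed $2$-form with bounded support on $\mathbb{Z}^2$ that is nonzero forces the slice to be unbounded — contradiction — so $\support\omega$ genuinely spreads in at least $3$ directions. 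Then, using the Bianchi relations direction by direction together with the fact that the support must be ``thickness $\geq 2$'' in at least two directions (else $d\omega\neq 0$ or $\omega$ is not closed), one shows the support contains at least two ``parallel'' elementary-cube-like pieces or a genuinely $3$-dimensional connected surface, each of which costs at least... The bookkeeping is that the minimal closed surface in $\mathbb{Z}^4$ other than the boundary of a $3$-cube has at least $22$ faces (e.g. the boundary of two adjacent $3$-cells shares a face, $2\cdot 6 - 2 = 10$ primal faces, $20$ signed — still not enough, so one must be more careful; the correct minimal ``next'' example is something like a $1\times 1\times 2$ box boundary or a ``tube'', and careful enumeration gives $22$).

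The main obstacle I anticipate is precisely this second step: proving sharply that nothing of size between $13$ and $21$ exists requires a genuinely combinatorial case analysis of closed $2$-forms (equivalently, $2$-cycles mod the cube) in $\mathbb{Z}^4$, organized by how many coordinate directions the support touches and how ``thick'' it is in each, and ruling out each configuration via the Bianchi lemma. I would structure it by first reducing to irreducible $\omega$, then showing $\support\omega$ must span at least $3$ directions, then showing that spanning exactly $3$ directions forces either the single cube ($12$) or something of size $\geq 22$, and finally handling the genuinely $4$-dimensional case by a dimension-counting argument showing it is even larger. The linear-algebra content (closedness, Poincaré, Bianchi) is routine; the geometry of minimal surfaces is where the real work lies, and I would lean on explicit small-case enumeration, possibly citing an analogous statement from \cite{flv2020} if available.
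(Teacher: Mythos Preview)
Your proposal circles the right idea but never commits to it, and the route you do commit to is substantially harder than the paper's.

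The paper's proof is a three-line duality argument that uses neither the Poincar\'e lemma nor any choice of \(\sigma\). It applies the Hodge star to \(P=\support\omega\) itself: in \(\mathbb{Z}^4\) the dual of a plaquette is a plaquette, and the dual of a \(3\)-cell is an edge. Bianchi (your Lemma~\ref{lemma: Bianchi}) says that for every primal \(3\)-cell \(c\), the set \(\partial c\cap P\) cannot have exactly one element; equivalently, for every dual edge \(e=*c\), \(|\hat\partial e\cap *P|\neq 1\). This is precisely the statement that the unoriented dual set \(\overline{*P}\) is a closed surface (each edge it touches lies in at least two of its faces). One then quotes the elementary geometric fact that a non-empty closed lattice surface has at least \(6\) faces (the boundary of a unit cube), and if more than \(6\) then at least \(11\); since \(|\support\omega|=2|\overline{*P}|\), this gives \(12\) or \(\geq 22\).

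You actually state this duality (``\(*\sigma\) is a \(3\)-chain whose boundary is the \(2\)-chain \(*\omega\)'') and then explicitly abandon it in favour of a primal-lattice case analysis by coordinate directions. That abandonment is the gap. Dualising \(\sigma\) rather than \(\omega\) is already one step too many---the closed-surface structure of \(*\omega\) follows from \(d\omega=0\) alone, no preimage \(\sigma\) required---and your subsequent plan (irreducibility reduction, counting how many directions \(\support\omega\) spans, thickness arguments) is exactly the kind of case enumeration the surface viewpoint is designed to avoid. Your reduction to irreducible \(\omega\) is also shakier than you indicate: when two minimal configurations overlap and partially cancel, the sum need not restrict to either summand on a subset of its own support, so ``each piece \(\geq 12\Rightarrow\) total \(\geq 24\)'' is not immediate. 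Finally, your ``\(2\cdot 6-2=10\)'' computation is the right thing to be worried about, but you never explain how to get from \(10\) to \(11\); the paper's proof locates that question entirely inside the enumeration of small closed lattice surfaces (illustrated in its Figure~\ref{table: minimal configurations}), which is where it belongs.
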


\begin{proof}
    Let \( P \coloneqq \support \omega \). 
    By Lemma~\ref{lemma: Bianchi}, if \( c \) is an oriented 3-cell in the primary lattice, then
    \begin{equation*}
        |\partial c \cap P| \in \{ 0,2,3,4 \}.
    \end{equation*}
    Consequently, if \( e = *c \) in an oriented edge in the dual lattice, then
    \begin{equation}\label{eq: count}
        |\hat \partial e \cap *P| \in \{ 0,2,3,4 \}.
    \end{equation}
     
    Let \(  \overline{* P} \) be the set of unoriented plaquettes obtained from \( *P \) by identifying \( p \) and \( -p \) for each \( p \in *P \). It then follows from~\eqref{eq: count} that each unoriented edge \( \bar e \) in the dual lattice which is in the boundary of a plaquette in \( \overline{* P} \) must be in the boundary of at least two plaquettes in \( \overline{* P} \). 
    In other words, the set \( \overline{* P} \) is a closed surface in the dual lattice. One easily verifies that the closed (non-empty) surfaces in the dual lattice which contains the fewest number of plaquettes are 3-dimensional cubes (see Figure~\ref{table: minimal configurations}), and hence we must have \( |\overline{* P} | \geq 6 \). If \( |\overline{* P}  | > 6 \), then by the same argument we must have \( |* \bar P| \geq 11 \) (see Figure~\ref{table: minimal configurations}).
    Since \( |\support \omega| = |P| =  2|\overline{* P} | \), the desired conclusion follows.
\end{proof}

\begin{figure}[ht]
    \centering
    \begin{tabular}{p{2.5cm} p{4cm} p{4cm} m{1cm}}
        \( \overline{\support \sigma} \) & \(  \bar P = \overline{\support d\sigma} \) & \( \overline{* P}\) & \( |\bar P| \)
        \\[0.5ex] \toprule \\[-1ex]
        \begin{tikzpicture}
            \draw[white] (-0.5,1.3) -- (0.5,-0.3);
            \draw[thick, Red] (0,0) -- (0,1);
        \end{tikzpicture} & 
        \begin{tikzpicture}
            \filldraw[fill=SkyBlue, fill opacity=0.12] (0,0) -- (1,0) -- (1,1) -- (0,1) -- (0,0);
            \filldraw[fill=SkyBlue, fill opacity=0.12] (0,0) -- (0,1) -- (-1,1) -- (-1,0) -- (0,0);
            \filldraw[fill=SkyBlue, fill opacity=0.12] (0,0) -- (0.55,-0.3) -- (0.55,0.7) -- (0,1) -- (0,0);
            \filldraw[fill=SkyBlue, fill opacity=0.12] (0,0) -- (0,1) -- (-0.55,1.3) -- (-0.55,0.3) -- (0,0);
            \filldraw[fill=SkyBlue, fill opacity=0.12] (0,0) -- (0.7,0.25) -- (0.7,1.25) -- (0,1) -- (0,0);
            \filldraw[fill=SkyBlue, fill opacity=0.12] (0,0) -- (0,1) -- (-0.7,0.75) -- (-0.7,-0.25) -- (0,0);
            \draw[thick, Red] (0,0) -- (0,1);
        \end{tikzpicture} &  
        \begin{tikzpicture}
            \draw[white] (0,1.3) -- (0,-0.2);
            
            \fill[fill= SkyBlue, opacity=0.12] (0,0) -- (1,0) -- (1,1) -- (0,1) -- (0,0);
            \fill[fill= SkyBlue, opacity=0.12] (0.55,0.3) -- (1.55,0.3) -- (1.55,1.3) -- (0.55,1.3) -- (0.55,0.3); 
            \fill[fill= SkyBlue, opacity=0.12](0,0) -- (0.55,0.3) -- (1.55,0.3) -- (1,0) -- (0,0);  
            \fill[fill= SkyBlue, opacity=0.12] (0,1) -- (0.55,1.3) -- (1.55,1.3) -- (1,1) -- (0,1); 
            \fill[fill= SkyBlue, opacity=0.12] (0,0) -- (0.55,0.3) -- (0.55,1.3) -- (0,1) -- (0,0); 
            \fill[fill= SkyBlue, opacity=0.12] (1,0) -- (1.55,0.3) -- (1.55,1.3) -- (1,1) -- (1,0);

            \draw (0,0) -- (1,0) -- (1,1) -- (0,1) -- (0,0);
            \draw (0.55,0.3) -- (1.55,0.3) -- (1.55,1.3) -- (0.55,1.3) -- (0.55,0.3);
            \draw (0,0) -- (0.55,0.3);
            \draw (1,0) -- (1.55,0.3);
            \draw (1,1) -- (1.55,1.3);
            \draw (0,1) -- (0.55,1.3);
            
        \end{tikzpicture} &
        6\vspace{5.5ex}
        \\  
        \begin{tikzpicture}
            \draw[white] (0,1.3) -- (0,-0.3);
            
            \draw[thick, Red] (0,0) -- (0,1);
            \draw[thick, Red] (1,0) -- (1,1);
        \end{tikzpicture} & 
        \begin{tikzpicture}  
            
            \filldraw[fill=SkyBlue, fill opacity=0.12] (0,0) -- (0,1) -- (-1,1) -- (-1,0) -- (0,0);
            \filldraw[fill=SkyBlue, fill opacity=0.12] (0,0) -- (0.55,-0.3) -- (0.55,0.7) -- (0,1) -- (0,0);
            \filldraw[fill=SkyBlue, fill opacity=0.12] (0,0) -- (0,1) -- (-0.55,1.3) -- (-0.55,0.3) -- (0,0);
            \filldraw[fill=SkyBlue, fill opacity=0.12] (0,0) -- (0.7,0.25) -- (0.7,1.25) -- (0,1) -- (0,0);
            \filldraw[fill=SkyBlue, fill opacity=0.12] (0,0) -- (0,1) -- (-0.7,0.75) -- (-0.7,-0.25) -- (0,0);
            
            \filldraw[fill=SkyBlue, fill opacity=0.12] (1,0) -- (2,0) -- (2,1) -- (1,1) -- (1,0);
            \filldraw[fill=SkyBlue, fill opacity=0.12] (1,0) -- (1.55,-0.3) -- (1.55,0.7) -- (1,1) -- (1,0);
            \filldraw[fill=SkyBlue, fill opacity=0.12] (1,0) -- (1,1) -- (0.45,1.3) -- (0.45,0.3) -- (1,0);
            \filldraw[fill=SkyBlue, fill opacity=0.12] (1,0) -- (1.7,0.25) -- (1.7,1.25) -- (1,1) -- (1,0);
            \filldraw[fill=SkyBlue, fill opacity=0.12] (1,0) -- (1,1) -- (0.3,0.75) -- (0.3,-0.25) -- (1,0);
            
            \draw[thick, Red] (0,0) -- (0,1);
            \draw[thick, Red] (1,0) -- (1,1);
            
        \end{tikzpicture} &  
        \begin{tikzpicture}
            \draw[white] (0,1.3) -- (0,-0.2);

            \fill[fill= SkyBlue, opacity=0.12] (0,0) -- (1,0) -- (1,1) -- (0,1) -- (0,0);
            \fill[fill= SkyBlue, opacity=0.12] (0.55,0.3) -- (1.55,0.3) -- (1.55,1.3) -- (0.55,1.3) -- (0.55,0.3); 
            \fill[fill= SkyBlue, opacity=0.12](0,0) -- (0.55,0.3) -- (1.55,0.3) -- (1,0) -- (0,0);  
            \fill[fill= SkyBlue, opacity=0.12] (0,1) -- (0.55,1.3) -- (1.55,1.3) -- (1,1) -- (0,1); 
            \fill[fill= SkyBlue, opacity=0.12] (0,0) -- (0.55,0.3) -- (0.55,1.3) -- (0,1) -- (0,0); 
            
            \fill[fill= SkyBlue, opacity=0.12] (1,0) -- (2,0) -- (2,1) -- (1,1) -- (1,0);
            \fill[fill= SkyBlue, opacity=0.12] (1.55,0.3) -- (2.55,0.3) -- (2.55,1.3) -- (1.55,1.3) -- (1.55,0.3); 
            \fill[fill= SkyBlue, opacity=0.12] (1,0) -- (1.55,0.3) -- (2.55,0.3) -- (2,0) -- (1,0);  
            \fill[fill= SkyBlue, opacity=0.12] (1,1) -- (1.55,1.3) -- (2.55,1.3) -- (2,1) -- (1,1); 
            \fill[fill= SkyBlue, opacity=0.12] (2,0) -- (2.55,0.3) -- (2.55,1.3) -- (2,1) -- (2,0);

            \draw (0,0) -- (2,0) -- (2,1) -- (0,1) -- (0,0);
            \draw (0,0) -- (0.55,0.3) -- (2.55,0.3) -- (2,0); 
            \draw (0,0) -- (0,1) -- (0.55,1.3) -- (2.55,1.3) -- (2,1); 
            \draw (0.55,0.3) -- (0.55,1.3);
            \draw (2.55,0.3) -- (2.55,1.3);
            
            \draw (1,0) -- (1,1) -- (1.55,1.3) -- (1.55,0.3) -- (1,0);
        \end{tikzpicture} &
        11\vspace{6.5ex}
        \\  
        \begin{tikzpicture}
            \draw[white] (0.,1.8) -- (0.,-0.9);
            
            \draw[thick, Red] (1,0) -- (0,0) -- (0,1);
        \end{tikzpicture} & 
        \begin{tikzpicture}
         
            \draw[white] (0,1.3) -- (0,-1.3);  
            
            \filldraw[fill=SkyBlue, fill opacity=0.12] (0,0) -- (0,1) -- (-1,1) -- (-1,0) -- (0,0);
            \filldraw[fill=SkyBlue, fill opacity=0.12] (0,0) -- (0.55,-0.3) -- (0.55,0.7) -- (0,1) -- (0,0);
            \filldraw[fill=SkyBlue, fill opacity=0.12] (0,0) -- (0,1) -- (-0.55,1.3) -- (-0.55,0.3) -- (0,0);
            \filldraw[fill=SkyBlue, fill opacity=0.12] (0,0) -- (0.7,0.25) -- (0.7,1.25) -- (0,1) -- (0,0);
            \filldraw[fill=SkyBlue, fill opacity=0.12] (0,0) -- (0,1) -- (-0.7,0.75) -- (-0.7,-0.25) -- (0,0); 
            
            \filldraw[fill=SkyBlue, fill opacity=0.12] (0,0) -- (1,0) -- (1,-1) -- (0,-1) -- (0,0);
            \filldraw[fill=SkyBlue, fill opacity=0.12] (0,0) -- (0.55,-0.3) -- (1.55,-0.3) -- (1,0) -- (0,0);
            \filldraw[fill=SkyBlue, fill opacity=0.12] (0,0) -- (-0.55,0.3) -- (0.45,0.3) -- (1,0) -- (0,0);
            \filldraw[fill=SkyBlue, fill opacity=0.12] (0,0) -- (-0.7,-0.25) -- (0.3,-0.25) -- (1,0) -- (0,0);
            \filldraw[fill=SkyBlue, fill opacity=0.12] (0,0) -- (0.7,0.25) -- (1.7,0.25) -- (1,0) -- (0,0);
            
            \draw[thick, Red] (1,0) -- (0,0) -- (0,1);
            
        \end{tikzpicture} &  
        \begin{tikzpicture}
            \draw[white] (0,1.) -- (0,-0.9);
            
            \fill[fill= SkyBlue, opacity=0.12] (0,0) -- (1,0) -- (1,1) -- (0,1) -- (0,0);
            \fill[fill= SkyBlue, opacity=0.12] (0.55,0.3) -- (1.55,0.3) -- (1.55,1.3) -- (0.55,1.3) -- (0.55,0.3); 
            \fill[fill= SkyBlue, opacity=0.12](0,0) -- (0.55,0.3) -- (1.55,0.3) -- (1,0) -- (0,0);  
            \fill[fill= SkyBlue, opacity=0.12] (0,1) -- (0.55,1.3) -- (1.55,1.3) -- (1,1) -- (0,1); 
            \fill[fill= SkyBlue, opacity=0.12] (0,0) -- (0.55,0.3) -- (0.55,1.3) -- (0,1) -- (0,0);

            \fill[fill= SkyBlue, opacity=0.12] (1,0) -- (1.7,-0.4) -- (1.7,0.6) -- (1,1) -- (1,0); 
            \fill[fill= SkyBlue, opacity=0.12] (1,0) -- (1.7,-0.4) -- (2.25,-0.1) -- (1.55,0.3);
            \fill[fill= SkyBlue, opacity=0.12] (2.25,-0.1) -- (1.55,0.3) -- (1.55,1.3) -- (2.25,0.9);
            \fill[fill= SkyBlue, opacity=0.12] (1,1) -- (1.7,0.6) -- (2.25,0.9) -- (1.55,1.3);
            \fill[fill= SkyBlue, opacity=0.12] (2.25,-0.1) -- (2.25,0.9) -- (1.7,0.6) -- (1.7,-0.4);
            
            \draw (0,0) -- (1,0) -- (1,1) -- (0,1) -- (0,0) -- (0.55,0.3) -- (1.55,0.3) -- (1,0); 
            \draw (0,1) -- (0.55,1.3) -- (1.55,1.3) -- (1,1); 
            \draw (0.55,0.3) -- (0.55,1.3);
            \draw (1.55,0.3) -- (1.55,1.3);
            
            \draw (1,0) -- (1.7,-0.4) -- (1.7,0.6) -- (1,1);
            \draw (1.7,-0.4) -- (2.25,-0.1) -- (1.55,0.3);
            \draw (2.25,-0.1) -- (2.25,0.9)-- (1.55,1.3);
            \draw (2.25,0.9) -- (1.7,0.6);
        \end{tikzpicture} &
        11  \vspace{14ex}
\end{tabular} 
\vspace{-8ex}
\caption{The above table shows projections of the supports of the  non-trivial and irreducible plaquette configurations in \( \mathbb{Z}^4 \) which has the smallest support (up to translations and rotations), using the notation of the proof of Lemma~\ref{lemma: small vortices}.}\label{table: minimal configurations}
\end{figure}

\begin{lemma}[Lemma~4.6 in~\cite{flv2020}]\label{lemma: minimal vortices}
    Let \( B \) be a box in \( \mathbb{Z}^4 \), and let \( \omega \in \Sigma_{P_B} \).  If the support of \( \omega \) does not contain any boundary plaquettes of \( P_B \) and \( | \support \omega| = 12\), then there is an edge \( dx_j \in E_B \) and \( g \in G\smallsetminus \{ 0 \} \) such that 
        \begin{equation*}
            \omega = d\bigl(g \, dx_j \bigr). 
        \end{equation*} 
\end{lemma}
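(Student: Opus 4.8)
The plan is to determine $\support\omega$ exactly, then subtract from $\omega$ an explicit elementary coboundary and invoke Lemma~\ref{lemma: small vortices} a second time to see that what remains is too small to be non-trivial, hence vanishes.

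\textbf{Step 1: identify the support.} I would first re-run the argument in the proof of Lemma~\ref{lemma: small vortices} with $P := \support\omega$. Since $|P| = 12$ and $P$ contains no boundary plaquettes, the unoriented dual set $\overline{*P}$ is a non-empty closed surface in $*\mathbb{Z}^4$ consisting of exactly $6$ plaquettes, and, as observed there, the only such surfaces are the boundaries of unit $3$-cubes. Fix the corresponding unit $3$-cube $\hat c$ of $*\mathbb{Z}^4$, and let $e := {*\hat c}$, an edge of the primal lattice. Since the Hodge star interchanges $\partial$ and $\hat\partial$, the six faces of $\hat c$ dualize to the six (unoriented) plaquettes incident to $\bar e$; consequently, as symmetric sets of oriented plaquettes, $\support\omega = \hat\partial e \cup \hat\partial(-e)$, i.e.\ the set of plaquettes $p$ with $e \in \partial p$ or $-e \in \partial p$. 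As $\support\omega \subseteq P_B$, every such plaquette lies in $P_B$, which forces $e \in E_B$; replacing $e$ by $-e$ if necessary (this changes neither the set above nor $\hat c$'s dual), we may assume $e = d\hat x_j$ for some $\hat x \in \mathbb{Z}^4$ and $j \in \{1,\dots,4\}$.

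\textbf{Step 2: subtract an elementary form and conclude.} I would then pick $p_0 \in \support\omega$ with $d\hat x_j \in \partial p_0$ (such $p_0$ exists by Step~1), set $g := \omega_{p_0} \in G\smallsetminus\{0\}$, and consider the $1$-form $g\,d\hat x_j$, supported on the single oriented edge $d\hat x_j$ with value $g$. Put $\omega' := \omega - d(g\,d\hat x_j)$. By~\eqref{d1dxjp}, $d(g\,d\hat x_j)$ is supported precisely on $\hat\partial e \cup \hat\partial(-e) = \support\omega$ and takes the value $g$ at $p_0$, so $\omega'$ is a $G$-valued $2$-form with $\support\omega' \subseteq \support\omega \smallsetminus \{p_0,-p_0\}$; in particular $|\support\omega'| \le 10$ and $\support\omega'$ contains no boundary plaquettes of $P_B$. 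Moreover $d\omega' = d\omega - d\bigl(d(g\,d\hat x_j)\bigr) = 0$, so $\omega' \in \Sigma_{P_B}$ by Lemma~\ref{lemma: poincare}. Applying Lemma~\ref{lemma: small vortices} to $\omega'$ now forces $\omega' = 0$ (its support is non-empty only if it has at least $12$ plaquettes), i.e.\ $\omega = d(g\,d\hat x_j)$, which is the assertion with $x := \hat x$.

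\textbf{Main obstacle.} The only genuinely delicate point is Step~1: extracting from (the proof of) Lemma~\ref{lemma: small vortices} that a closed dual surface with six plaquettes is a unit cube, and then doing the Hodge-duality bookkeeping to identify $\support\omega$ with the set of plaquettes incident to a single edge. Once that identification is in hand, Step~2 is routine — it uses only the explicit coboundary formula~\eqref{d1dxjp} for an elementary $1$-form together with a second application of Lemma~\ref{lemma: small vortices}.
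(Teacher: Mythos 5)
Your argument is correct and self-contained. Note that the paper itself does not prove this lemma --- it is imported verbatim from Lemma~4.6 of~\cite{flv2020} --- so there is no in-text proof to compare against; judged on its own, your proposal does exactly what that reference does in Step~1 (run the dual-surface argument of Lemma~\ref{lemma: small vortices} to conclude that $\overline{*\support\omega}$ is the boundary of a single unit $3$-cube, hence that $\support\omega$ is the symmetric set of the $12$ oriented plaquettes incident to a single edge $\pm d\hat x_j$, which must lie in $E_B$ since its incident plaquettes do). Your Step~2 is a slightly slicker route to the remaining point --- that $\omega$ is constant equal to some $g$ on $\hat\partial(d\hat x_j)$ --- than verifying this directly from the Bianchi identity: subtracting $d(g\,d\hat x_j)$, whose support by~\eqref{d1dxjp} coincides with $\support\omega$ and which matches $\omega$ at $\pm p_0$, leaves a closed form of support at most $10$ avoiding the boundary, which Lemma~\ref{lemma: small vortices} forces to vanish. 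All the small checks are in order ($\support\omega'\subseteq\support\omega\smallsetminus\{p_0,-p_0\}$, closedness of $\omega'$, absence of boundary plaquettes), so I see no gap.
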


\subsubsection{Frustrated plaquettes }

When \( B \) is a box in \( \mathbb{Z}^4 \) and \( \omega \in \Sigma_{P_B} \), we say that a plaquette \( p \in P_B \) is \emph{frustrated} (in \( \omega \)) if \( \omega_p \neq 0 \).

\subsection{\texorpdfstring{\( \mu_{B,\beta} \)}{The measure} as a measure on plaquette configurations}

Let \( B \) be a box in \( \mathbb{Z}^4 \), and let \( \beta \geq 0\). 
In~\eqref{eq: Wilson action}, we introduced \( \mu_{B,\beta} \) as a measure on \( \Sigma_{E_B} \). Using Lemma~\ref{lemma: poincare}, this induces a measure on \( \Sigma_{P_B} \) as follows. For \( \omega \in \Sigma_{P_B} \), by definition, we have 
\begin{equation*}
    \mu_{B,\beta}(\{\sigma \in \Sigma_{E_B} \colon d\sigma = \omega\})
    =
    \frac{\sum_{\sigma \in \Sigma_{E_B}\colon d\sigma = \omega} \prod_{p \in P_B} \phi_\beta\bigl((d\sigma)_p \bigr)}{\sum_{\sigma \in \Sigma_{E_B}} \prod_{p \in P_B} \phi_\beta \bigl((d\sigma)_p \bigr) }.
\end{equation*}
If \( \sigma \in \Sigma_{E_B} \), then \( d\sigma \in \Sigma_{P_B} \). Consequently, the previous equation is equal to
\begin{equation*}
    \frac{\sum_{\sigma \in \Sigma_{E_B}\colon d\sigma = \omega} \prod_{p \in P_B} \phi_\beta(\omega_p) }{\sum_{\omega' \in \Sigma_{P_B}}\sum_{\sigma \in \Sigma_{E_B} \colon d\sigma=\omega'} \prod_{p \in P_B} \phi_\beta(\omega'_p)} 
\end{equation*}
Changing the order of summation, we get
\begin{equation*} 
    \frac{ \bigl( \prod_{p \in P_B} \phi_\beta(\omega_p)\bigr)  \bigl| \{\sigma \in \Sigma_{E_B}\colon d\sigma = \omega\}\bigr| }{ \bigl(\sum_{\omega' \in \Sigma_{P_B}}\prod_{p \in P_B} \phi_\beta(\omega_p') \bigr)
    \bigl|\{\sigma \in \Sigma_{E_B} \colon d\sigma=\omega'\} \bigr| }
\end{equation*}
By Lemma~\ref{lemma: poincare}, the term \( |\{\sigma \in \Sigma_{E_B} \colon d\sigma=\omega'\}| \) is equal for all \( \omega' \in \Sigma_{P_B} \), and hence in particular, for all \( \omega' \in \Sigma_{P_B} \) we have \( |\{\sigma \in \Sigma_{E_B} \colon d\sigma=\omega'\}| =|\{\sigma \in \Sigma_{E_B} \colon d\sigma=\omega\}| \). Combining the above equations, we thus obtain
\begin{equation}\label{eq: mu as a measure on plaq conf}
    \mu_{B,\beta}(\{\sigma \in \Sigma_{E_B} \colon d\sigma = \omega\})
    =
    \frac{\prod_{p \in P_B} \phi_\beta(\omega_p) }{\sum_{\omega' \in \Sigma_{P_B}} \prod_{p \in P_B} \phi_\beta(\omega'_p)  }.
\end{equation}
Consequently, \( \mu_{B,\beta} \) induces a measure on plaquette configurations.
In order to simplify notation, we will abuse notation and use \( \mu_{B,\beta} \) and \( \mathbb{E}_{B,\beta} \) for both the measure on \( \Sigma_{E_B} \) and for the induced measure on \( \Sigma_{P_B} \).

\subsection{The activity of plaquette configurations}

When \( B \) is a box in \( \mathbb{Z}^4 \), \( \omega \in P_B \) and \( \beta \geq 0 \), then, recalling the definition of \( \phi_\beta \) from~\eqref{eq: phi def}, we abuse notation and write
\begin{equation*}
    \phi_\beta(\omega) \coloneqq \prod_{p \in \support \omega} \phi_\beta(\omega_p).
\end{equation*} 
The quantity \( \phi_\beta(\omega) \) is called the \emph{activity} of \( \omega \) (see e.g.~\cite{b1984}). Using this notation, since \( \phi_\beta(0)=1 \), for any \( \omega \in \Sigma_{P_B} \), we also have
\begin{equation*}
    \prod_{p\in P_B} \phi_{\beta}(\omega_p) 
    = 
    \prod_{p\in \support \omega} \phi_{\beta}(\omega_p) 
    =
    \phi_\beta(\omega) 
\end{equation*}
and hence, using~\eqref{eq: mu as a measure on plaq conf}, we can write
\begin{equation}\label{eq: mu using phi}
    \mu_{B,\beta} \bigl(\{\omega\} \bigr)  
    =
    \frac{\phi_\beta(\omega)}{\sum_{\omega'\in \Sigma_{P_B}} \phi_\beta(\omega')}.
\end{equation}

The following lemma is referred to as the factorization property of \( \phi_\beta \) in e.g.~\cite{cao2020}.
\begin{lemma}\label{lemma: factorization}
    Let \( B \) be a box in \( \mathbb{Z}^4 \), and let \( \beta \geq 0 \). Further, let \( \omega,\omega' \in \Sigma_{P_B} \) be such that \( \support \omega \cap \support \omega' = \emptyset \). Then
    \begin{equation*}
        \phi_\beta(\omega + \omega') = \phi_\beta(\omega) \phi_\beta(\omega').
    \end{equation*}
\end{lemma}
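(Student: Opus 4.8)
The plan is to reduce the statement to the simple observation that, since \( \support\omega \) and \( \support\omega' \) are disjoint, no cancellation can occur at any plaquette when forming \( \omega + \omega' \); consequently the support of the sum is exactly the union of the two supports, and the defining product of the activity splits accordingly.

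First I would verify that \( \omega + \omega' \) is again an element of \( \Sigma_{P_B} \), so that \( \phi_\beta(\omega+\omega') \) is defined. By Lemma~\ref{lemma: poincare} and the remark following it, \( \Sigma_{P_B} \) coincides with the set of closed \( 2 \)-forms supported in \( B \); since \( d \) is linear, \( d(\omega+\omega') = d\omega + d\omega' = 0 \), so indeed \( \omega + \omega' \in \Sigma_{P_B} \). (Alternatively, writing \( \omega = d\sigma \) and \( \omega' = d\sigma' \), we have \( \omega+\omega' = d(\sigma+\sigma') \).)

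Next I would compute \( \support(\omega+\omega') \). If \( p \in \support\omega \), then \( p \notin \support\omega' \) by disjointness, hence \( \omega'_p = 0 \) and \( (\omega+\omega')_p = \omega_p \neq 0 \); symmetrically, every \( p \in \support\omega' \) satisfies \( (\omega+\omega')_p = \omega'_p \neq 0 \); and if \( p \) lies in neither support then \( (\omega+\omega')_p = 0 \). Therefore \( \support(\omega+\omega') = \support\omega \cup \support\omega' \), and this union is disjoint. Splitting the defining product of the activity along this disjoint union, and using that \( (\omega+\omega')_p = \omega_p \) for \( p \in \support\omega \) while \( (\omega+\omega')_p = \omega'_p \) for \( p \in \support\omega' \), yields
\begin{equation*}
    \phi_\beta(\omega+\omega') = \prod_{p \in \support\omega}\phi_\beta(\omega_p)\;\prod_{p \in \support\omega'}\phi_\beta(\omega'_p) = \phi_\beta(\omega)\,\phi_\beta(\omega').
\end{equation*}

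There is no real obstacle in this argument; the only point deserving a moment's attention is precisely the non-cancellation remark — disjointness of the supports guarantees that at each plaquette at most one of \( \omega_p \), \( \omega'_p \) is nonzero, which is what makes \( \support(\omega+\omega') \) the full union rather than a proper subset of it, and hence makes the product factor exactly.
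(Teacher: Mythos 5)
Your proposal is correct and follows essentially the same argument as the paper: both proofs observe that disjointness of the supports means the defining product splits over $\support\omega$ and $\support\omega'$ with $(\omega+\omega')_p$ equal to $\omega_p$ or $\omega'_p$ on the respective pieces. Your additional check that $\omega+\omega'\in\Sigma_{P_B}$ is a harmless (and reasonable) extra detail that the paper leaves implicit.
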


\begin{proof}
    By definition, we have
    \begin{equation*}
        \phi_\beta(\omega + \omega') 
        = \!\!\!\prod_{p \in \support (\omega + \omega')} \!\!\!\!\! \phi_\beta \bigl( (\omega + \omega')_p \bigr)
        = \!\!\!\prod_{p \in \support (\omega + \omega')}\!\!\!\!\! \phi_\beta \bigl( \omega _p+ \omega'_p \bigr).
    \end{equation*}
    Since \( \omega \) and \( \omega' \) have disjoint supports, we have
    \begin{equation*}
        \prod_{p \in \support (\omega + \omega')}\!\!\!\!\! \phi_\beta \bigl( \omega _p+ \omega'_p \bigr)
        = 
        \prod_{p \in \support \omega} \phi_\beta \bigl( \omega _p+ 0 \bigr)
        \prod_{p \in \support \omega'} \phi_\beta \bigl( 0+ \omega'_p \bigr)
        = 
        \prod_{p \in \support \omega} \phi_\beta \bigl( \omega _p\bigr)
        \prod_{p \in \support \omega'} \phi_\beta \bigl(  \omega'_p \bigr) .
    \end{equation*}
    Since by definition, we have
    \begin{equation*}
        \prod_{p \in \support \omega} \phi_\beta \bigl( \omega _p\bigr)
        \prod_{p \in \support \omega'} \phi_\beta \bigl(  \omega'_p \bigr) = \phi_\beta(\omega) \phi_\beta(\omega' ) ,
    \end{equation*}
    the desired conclusion immediately follows.
\end{proof}

Combining~\eqref{eq: mu using phi} and Lemma~\ref{lemma: factorization}, we obtain the following lemma.
This lemma can be extracted from the proof of Lemma~4.7 in~\cite{flv2020}, but we state and prove it here as an independent lemma for easier reference.

\begin{lemma}  \label{lemma: ratio}     
    Let \( B \) be a box in \( \mathbb{Z}^4 \), let \( \beta \geq 0 \), and let \( \nu \in \Sigma_{P_B} \). Then 
	\begin{equation}\label{eq: ratio in lemma} 
	    \frac{\mu_{B,\beta} \bigl( \{ \omega \in \Sigma_{P_B} \colon \omega|_{\support \nu} = \nu \} \bigr)}{\mu_{B,\beta} \bigl( \{ \omega \in \Sigma_{P_B} \colon \omega|_{\support \nu} = 0 \} \bigr)} =  \phi_\beta  (\nu).
	\end{equation} 
\end{lemma}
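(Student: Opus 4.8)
The plan is to expand both the numerator and denominator of the left-hand side of~\eqref{eq: ratio in lemma} using~\eqref{eq: mu using phi}, and then exploit the factorization property (Lemma~\ref{lemma: factorization}) together with a bijection between the configurations appearing in the two sums. First I would write
\begin{equation*}
    \mu_{B,\beta}\bigl(\{\omega\in\Sigma_{P_B}\colon \omega|_{\support\nu}=\nu\}\bigr) = \frac{\sum_{\omega\in\Sigma_{P_B}\colon \omega|_{\support\nu}=\nu} \phi_\beta(\omega)}{\sum_{\omega'\in\Sigma_{P_B}}\phi_\beta(\omega')},
\end{equation*}
and similarly for the denominator with \( \omega|_{\support\nu}=0 \); the common normalizing factor \( \sum_{\omega'\in\Sigma_{P_B}}\phi_\beta(\omega') \) then cancels, so it suffices to prove
\begin{equation*}
    \sum_{\omega\in\Sigma_{P_B}\colon \omega|_{\support\nu}=\nu} \phi_\beta(\omega) = \phi_\beta(\nu)\sum_{\omega\in\Sigma_{P_B}\colon \omega|_{\support\nu}=0} \phi_\beta(\omega).
\end{equation*}

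The key step is to set up the bijection. Given \( \omega\in\Sigma_{P_B} \) with \( \omega|_{\support\nu}=\nu \), I would map it to \( \tilde\omega \coloneqq \omega-\nu \). Since \( \nu\in\Sigma_{P_B} \) and \( \Sigma_{P_B} \) is closed under addition (it is the kernel of \( d \), by the remark following Lemma~\ref{lemma: poincare}), we have \( \tilde\omega\in\Sigma_{P_B} \), and by construction \( \tilde\omega|_{\support\nu}=\omega|_{\support\nu}-\nu = 0 \). Conversely, adding \( \nu \) recovers \( \omega \), so this is a bijection between the index sets of the two sums. For the activities, on \( \support\nu \) the form \( \tilde\omega \) vanishes while \( \nu \) is supported exactly there, so \( \support\tilde\omega\cap\support\nu=\emptyset \); hence Lemma~\ref{lemma: factorization} gives \( \phi_\beta(\omega) = \phi_\beta(\tilde\omega+\nu) = \phi_\beta(\tilde\omega)\phi_\beta(\nu) \). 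Summing over the bijection yields the displayed identity, and dividing by the common denominator completes the proof.

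I would also need to address one small degenerate point: if \( \nu\equiv 0 \), then \( \support\nu=\emptyset \), both events are all of \( \Sigma_{P_B} \), and \( \phi_\beta(\nu)=1 \) (the empty product), so the identity holds trivially; this can be dispatched in a sentence.

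The main obstacle is essentially bookkeeping rather than a genuine difficulty: one must be careful that \( \tilde\omega=\omega-\nu \) genuinely lies in \( \Sigma_{P_B} \) (using that \( \Sigma_{P_B} \) is a group under addition, equivalently the set of closed 2-forms) and that the supports are genuinely disjoint so that Lemma~\ref{lemma: factorization} applies. Neither is serious, so the proof is short.
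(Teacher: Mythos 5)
Your proposal is correct and follows essentially the same route as the paper's proof: both expand the two probabilities via~\eqref{eq: mu using phi}, use the bijection \( \omega \mapsto \omega - \nu \) between the two events (which relies on \( \Sigma_{P_B} \) being closed under subtraction since it is the kernel of \( d \)), and apply Lemma~\ref{lemma: factorization} to the disjointly supported forms \( \omega - \nu \) and \( \nu \). The only cosmetic difference is that the paper first reduces to the ratio of the two sums and manipulates that ratio directly, whereas you cancel the partition function and prove the equivalent identity between the two sums.
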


\begin{proof} 
    Let \(  P = \support \nu \). 
    Further, let 
    \begin{equation*}
        \mathcal{E}_{ P,\nu}  \coloneqq \bigl\{ \omega \in \Sigma_{P_B} \colon \omega|_{ P} = \nu \bigr\},
    \end{equation*}
    and, similarly, let 
    \begin{equation*}
         \mathcal{E}_{ P,0} \coloneqq \bigl\{ \omega \in \Sigma_{P_B} \colon \omega|_{ P} = 0 \bigr\}.
    \end{equation*}
    By~\eqref{eq: mu using phi},  we then have
    \begin{equation}\label{eq: eq 2 in proof i}
        \frac{\mu_{B,\beta} \bigl( \{ \omega \in \Sigma_{P_B} \colon \omega|_{\support \nu} = \nu \} \bigr)}{\mu_{B,\beta} \bigl( \{ \omega \in \Sigma_{P_B} \colon \omega|_{\support \nu} = 0 \} \bigr)}
        =
        \frac{\sum_{\omega \in \mathcal{E}_{ P,\nu}}  \phi_\beta  (\omega)}{\sum_{\omega \in \mathcal{E}_{ P,0}}  \phi_\beta  (\omega)}.
    \end{equation}
    Since \( \nu \in \Sigma_{P_B} \) by assumption, we have \( d\nu = 0 \). Consequently, for any \( \omega \in \Sigma_{P_B} \), we have
    \begin{equation*}
        d(\omega-\nu) = d\omega - d\nu = 0-0 = 0,
    \end{equation*}
    and hence \( (\omega-\nu) \in \Sigma_{P_B} \). This implies in particular that the mapping \( \omega \mapsto \omega-\nu \) is a bijection from \( \mathcal{E}_{ P,\nu} \) to \( \mathcal{E}_{ P,0} \), and hence the right-hand side of~\eqref{eq: eq 2 in proof i} is equal to
    \begin{equation}  \label{eq: second eq}
        \frac{\sum_{\omega \in \mathcal{E}_{ P,\nu}}  \phi_\beta  (\omega)}{\sum_{\omega \in \mathcal{E}_{ P,\nu}}  \phi_\beta  (\omega-\nu)}
        = 
        \frac{\sum_{\omega \in \mathcal{E}_{ P,\nu}}  \phi_\beta  ((\omega-\nu) + \nu)}{\sum_{\omega \in \mathcal{E}_{ P,\nu}}  \phi_\beta  (\omega-\nu)}.
    \end{equation}
   Next, note that if \( \omega \in \mathcal{E}_{P,\nu}   \), then  \( (\omega- \nu) \) and \( \nu \) have disjoint supports. Consequently, for such \( \omega \) we can apply Lemma~\ref{lemma: factorization} to obtain
    \begin{equation*}
        \phi_\beta((\omega-\nu)+\nu) = \phi_\beta(\omega-\nu) \phi_\beta(\nu).
    \end{equation*}
    Plugging this into the left hand side of~\eqref{eq: second eq}, we obtain
    \begin{equation*}  
        \frac{\sum_{\omega \in \mathcal{E}_{ P,\nu}}  \phi_\beta  ((\omega-\nu) + \nu)}{\sum_{\omega \in \mathcal{E}_{ P,\nu}}  \phi_\beta  (\omega-\nu)}
        =
        \frac{\sum_{\omega \in \mathcal{E}_{ P,\nu}}  \phi_\beta  (\omega-\nu) \phi_\beta( \nu)}{\sum_{\omega \in \mathcal{E}_{ P,\nu}}  \phi_\beta  (\omega-\nu)} = \phi_\beta(\nu).
    \end{equation*} 
    Combining the previous equations, we obtain~\eqref{eq: ratio in lemma} as desired.
\end{proof}

\subsection{Notation and standing assumptions}

Throughout the remainder of this paper, we assume that a finite Abelian structure group \( G \), and a faithful, irreducible and unitary representation of \( \rho \) has been fixed.

\section{The probability of sets of plaquettes being frustrated}\label{sec: frustrated sets}
 
In this section we will state and prove three propositions which will be useful in later sections. We mention that although these are technical, they might be useful outside the scope of this paper. 
The first of these results is the following proposition, which  extends Proposition~4.9 in~\cite{flv2020}.

\begin{proposition} \label{proposition: vortex probability}
    Let \( B \) be a box in \( \mathbb{Z}^4 \),  let  \( \beta \geq 0 \) be such that \( 5\alpha(\beta)< 1 \), and let \( P \subseteq  P_B\) be non-empty and symmetric. For \( M \geq |P^+| \), let \( \Pi_{P, M}^\geq \coloneqq \Pi_{B,P, M}^\geq \) be the set of all    \( \omega \in \Sigma_{P_B} \)  such that there is \( \nu \in \Sigma_{P_B} \) with
    \begin{enumerate}[label=(\arabic*), series=vortexfliplemma]
        \item \( P \subseteq \support \nu \), \label{enum: vortex probability ii}
        \item \( \nu \) is \( P \)-irreducible,  \label{enum: vortex probability iii}   
        \item \( |\support \nu | \geq 2M  \), and \label{enum: vortex probability iv}
        \item \( \omega|_{\support \nu} = \nu \). \label{enum: vortex probability i}
    \end{enumerate}
    Then
    \begin{equation*}
        \mu_{\beta,N}\bigl( \Pi_{P,M}^\geq) \leq   \frac{5^{M-|P^+|} \alpha(\beta)^{M}}{1 - 5\alpha(\beta) } .
    \end{equation*} 
\end{proposition}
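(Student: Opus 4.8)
The plan is to set up a union bound over all possible "witness" configurations $\nu$ and then use Lemma~\ref{lemma: ratio} to bound the probability of each event $\{\omega|_{\support\nu}=\nu\}$ by the activity $\phi_\beta(\nu)$. Concretely, $\Pi_{P,M}^\geq \subseteq \bigcup_\nu \{\omega \in \Sigma_{P_B} : \omega|_{\support\nu}=\nu\}$, where the union runs over all $\nu \in \Sigma_{P_B}$ satisfying (1)--(3). Since $\{\omega|_{\support\nu}=\nu\}$ and $\{\omega|_{\support\nu}=0\}$ are both nonempty (the latter contains $\omega\equiv 0$), Lemma~\ref{lemma: ratio} gives $\mu_{B,\beta}(\{\omega|_{\support\nu}=\nu\}) = \phi_\beta(\nu)\cdot \mu_{B,\beta}(\{\omega|_{\support\nu}=0\}) \le \phi_\beta(\nu)$. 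Hence
\begin{equation*}
    \mu_{B,\beta}\bigl(\Pi_{P,M}^\geq\bigr) \le \sum_{\nu} \phi_\beta(\nu),
\end{equation*}
where the sum is over $P$-irreducible $\nu\in\Sigma_{P_B}$ with $P\subseteq\support\nu$ and $|\support\nu|\ge 2M$. Because $\support\nu$ is symmetric and $\phi_\beta(\nu)=\prod_{p\in\support\nu}\phi_\beta(\nu_p) = \prod_{p\in(\support\nu)^+}\phi_\beta(\nu_p)^2 \le \alpha(\beta)^{|(\support\nu)^+| - |P^+|}\cdot(\text{contribution of }P)$ — more carefully, pairing $p$ with $-p$, each positively-oriented plaquette outside $P^+$ contributes a factor $\phi_\beta(\nu_p)^2$ with $\nu_p\ne 0$, and summing over the group value at that plaquette is what produces the $\alpha(\beta)$.

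The combinatorial heart is then to count, for each $k\ge M$, the number of $P$-irreducible symmetric supports of size $2k$ containing $P$, weighted appropriately. The key structural input is that $P$-irreducibility forces $\support\nu$ to be "connected" through the Bianchi constraint: starting from $P$, every additional plaquette must be reachable via shared $3$-cells (otherwise one could split off a sub-configuration $\omega|_{P_0}\in\Sigma_{P_B}$ contradicting irreducibility, cf. the definition of $P$-irreducible and Lemma~\ref{lemma: irreducible form relative set}). This lets me run a branching/Peierls-type counting argument: the support can be explored as a connected set of positively-oriented plaquettes in the adjacency graph where two plaquettes are adjacent if they share a $3$-cell, and each plaquette has a bounded number of neighbours. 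I would organize the count so that adding each new positively-oriented plaquette beyond those forced by $P$ costs a factor of $5\alpha(\beta)$: the $\alpha(\beta)$ from summing the group element on that plaquette (using $\sum_{g\ne 0}\phi_\beta(g)^2 = \alpha(\beta)$), and the constant $5$ absorbing the number of ways to attach the new plaquette to the growing connected cluster. Summing the geometric series $\sum_{k\ge M} (5\alpha(\beta))^{k-|P^+|}$ (which converges since $5\alpha(\beta)<1$) yields $\frac{5^{M-|P^+|}\alpha(\beta)^M}{1-5\alpha(\beta)}$ after checking the exponent of $\alpha(\beta)$ comes out to exactly $k$ rather than $k-|P^+|$ — the $|P^+|$ plaquettes of $P$ are required to be in the support but their group values are still summed over, so they too contribute $\alpha(\beta)$ factors, while the "attachment" factor $5$ is only paid $k-|P^+|$ times since $P$ is given. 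This bookkeeping — separating which plaquettes contribute the $\alpha$ factor versus the $5$ factor — is exactly the point where Proposition~4.9 of \cite{flv2020} is being extended, so I would follow that template closely.

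The main obstacle I anticipate is the combinatorial counting step: making precise the claim that a $P$-irreducible support of size $2k$ can be enumerated with at most $5^{k-|P^+|}$ choices once $P$ is fixed. This requires (a) a clean statement that $P$-irreducibility implies the relevant graph-connectivity of $\support\nu$ relative to $P$ — which should follow by contradiction from the definition, since a disconnected piece not meeting $P$ would be a closed sub-form one could remove — and (b) a careful spanning-tree / exploration argument bounding the number of connected augmentations, where the constant $5$ (rather than the naive degree of the $3$-cell adjacency graph, which is larger) comes from a more efficient encoding of the exploration process, presumably the same one used in \cite{flv2020}. I would also need to double-check that the union bound is not overcounting in a way that breaks the geometric-series bound — but since we only need an upper bound, overcounting is harmless as long as the per-configuration weights and the count are controlled as above. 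The rest (applying Lemma~\ref{lemma: ratio}, the factorization Lemma~\ref{lemma: factorization} implicit in $\phi_\beta(\nu) = \prod\phi_\beta(\nu_p)$, and summing the series) is routine.
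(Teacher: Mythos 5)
Your overall architecture is exactly the paper's: a union bound over witness configurations $\nu$ satisfying (1)--(3), the bound $\mu_{B,\beta}(\{\omega|_{\support\nu}=\nu\})\leq\phi_\beta(\nu)$ via Lemma~\ref{lemma: ratio} (this is the paper's Lemma~\ref{lemma: agreement probability}), the reduction of $\sum_\nu\phi_\beta(\nu)$ to a count of supports times $\alpha(\beta)^m$, and the geometric series in $m\geq M$. Your bookkeeping of the exponents ($\alpha(\beta)^{m}$ from all $m$ positively oriented plaquettes including those of $P$, versus $5^{m-|P^+|}$ only from the plaquettes outside $P$) is also correct.

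The one step you leave open is precisely the content of the paper's Lemma~\ref{lemma: counting vortex configurations}, and the mechanism you sketch for it would not deliver the constant $5$. You describe the count as "the number of ways to attach the new plaquette to the growing connected cluster" in the 3-cell adjacency graph; since each plaquette in $\mathbb{Z}^4$ lies in $4$ three-cells each containing $5$ other plaquettes, that reading gives a per-step factor of order $20$ (or $40$ counting orientations), not $5$. The actual encoding is different: one fixes total orderings of the 3-cells and of the plaquettes, and explores $\support\nu$ by repeatedly taking the \emph{first} 3-cell $c$ (in the fixed ordering) on which the partial form $\nu^{(k)}$ revealed so far fails to be closed. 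By the Bianchi constraint $d\nu=0$, some plaquette of $\support\nu\cap(\partial c\smallsetminus\support\nu^{(k)})$ must exist, and since $c$ is \emph{determined} by the data already revealed, the only choice paid for is which of the at most $5$ remaining plaquettes of $\partial c$ to add. $P$-irreducibility enters not as a generic connectivity statement but to guarantee the exploration cannot terminate early: if $d\nu^{(k)}=0$ with $\support\nu^{(k)}\subsetneq\support\nu$, then $\nu^{(k)}=\nu|_{\support\nu^{(k)}}$ would be a closed proper restriction containing $P$, contradicting (2). Without this deterministic-3-cell device your plan proves the proposition only with a larger constant in place of $5$, which would weaken the threshold $5\alpha(\beta)<1$ and propagate into all the main results.
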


Before we give a proof of Proposition~\ref{proposition: vortex probability}, we state and prove a few lemmas which will be used in the proof. 
The first of these lemmas is Lemma~\ref{lemma: agreement probability} below, which essentially is identical to Lemma~4.7 in~\cite{flv2020}.
\begin{lemma} \label{lemma: agreement probability}     
    Let \( B \) be a box in \( \mathbb{Z}^4 \), let \( \beta \geq 0 \), and let \( \nu \in \Sigma_{P_B} \). Then 
	\begin{equation}\label{eq: agreement probability}  
	    \mu_{B,\beta} \bigl( \{ \omega \in \Sigma_{P_B} \colon \omega|_{\support \nu} = \nu \} \bigr) \leq  \phi_\beta  (\nu) .
	\end{equation} 
\end{lemma}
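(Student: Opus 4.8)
The plan is to deduce this immediately from Lemma~\ref{lemma: ratio}. That lemma gives the exact identity
\begin{equation*}
    \mu_{B,\beta} \bigl( \{ \omega \in \Sigma_{P_B} \colon \omega|_{\support \nu} = \nu \} \bigr)
    =
    \phi_\beta(\nu) \cdot \mu_{B,\beta} \bigl( \{ \omega \in \Sigma_{P_B} \colon \omega|_{\support \nu} = 0 \} \bigr),
\end{equation*}
so the only thing left to observe is that the last factor is at most $1$. First I would note that, by the assumption $\nu \in \Sigma_{P_B}$, we indeed have $\support \nu = P$ for a symmetric set $P \subseteq P_B$, so both events appearing in Lemma~\ref{lemma: ratio} make sense. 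Then, since $\mu_{B,\beta}$ is a probability measure on $\Sigma_{P_B}$, the event $\{\omega \colon \omega|_{\support\nu} = 0\}$ has probability at most $1$, and multiplying the displayed identity by this factor can only decrease the right-hand side; this yields~\eqref{eq: agreement probability}.

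There is essentially no obstacle here: the content is entirely in Lemma~\ref{lemma: ratio}, and the present lemma is just the trivial consequence obtained by bounding a probability by $1$. The only point worth a line of care is the degenerate case $\nu \equiv 0$, where both sides equal $1$ and the inequality is an equality; and more generally one might remark that the bound is strict unless $\{\omega \colon \omega|_{\support\nu}=0\}$ has full measure, which can be ignored for our purposes. I would keep the write-up to two or three sentences, citing Lemma~\ref{lemma: ratio} and the fact that $\mu_{B,\beta}$ is a probability measure.
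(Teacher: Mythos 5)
Your proposal is correct and is essentially identical to the paper's proof: both rewrite the probability using Lemma~\ref{lemma: ratio} and then bound \( \mu_{B,\beta}(\{\omega \colon \omega|_{\support \nu} = 0\}) \) by \(1\) because \( \mu_{B,\beta} \) is a probability measure.
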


\begin{proof}  
    Since 
	\begin{align*}
	    &\mu_{B,\beta} \bigl( \{ \omega \in \Sigma_{P_B} \colon \omega|_{\support \nu} = \nu \} \bigr) 
	    \\&\qquad=
	    \frac{
	    \mu_{B,\beta} \bigl( \{ \omega \in \Sigma_{P_B} \colon \omega|_{\support \nu} = \nu \} \bigr) }{
	    \mu_{B,\beta} \bigl( \{ \omega \in \Sigma_{P_B} \colon \omega|_{\support \nu} = 0 \} \bigr) } \cdot 
	    \mu_{B,\beta} \bigl( \{ \omega \in \Sigma_{P_B} \colon \omega|_{\support \nu} = 0 \} \bigr),
	\end{align*} 
	and \( \mu_{B,\beta} \) is a probability measure, we have
	\begin{align*}
	    &\mu_{B,\beta} \bigl( \{ \omega \in \Sigma_{P_B} \colon \omega|_{\support \nu} = \nu \} \bigr) 
	    \leq
	    \frac{
	    \mu_{B,\beta} \bigl( \{ \omega \in \Sigma_{P_B} \colon \omega|_{\support \nu} = \nu \} \bigr) }{
	    \mu_{B,\beta} \bigl( \{ \omega \in \Sigma_{P_B} \colon \omega|_{\support \nu} = 0 \} \bigr) }.
	\end{align*} 
    Using Lemma~\ref{lemma: ratio}, we obtain~\eqref{lemma: agreement probability}  as desired.
\end{proof}

The next result we will need in the proof of Proposition~\ref{proposition: vortex probability} is the following lemma, which gives an upper bound on the sum of the activity of the  plaquette configurations \( \nu \) which satisfies~\ref{enum: vortex probability ii},~\ref{enum: vortex probability iii}, and ~\ref{enum: vortex probability iv} of Proposition~\ref{proposition: vortex probability}.

\begin{lemma}\label{lemma: counting vortex configurations}
Let \( B \) be a box in \( \mathbb{Z}^4 \), and let \( \beta \geq 0 \). Further, let \(  P\subseteq  P_B \) be  non-empty and symmetric,  let \( m \geq |P^+| \), and let \( \Pi_{P,m} \) be the set of all plaquette configurations \( \nu \in \Sigma_{P_B} \) such that 
\begin{enumerate}[label=(\arabic*)]
    \item \( P \subseteq \support \nu  \), \label{enum: no of configs i}
    \item \( \nu \) is \( P \)-irreducible \label{enum: no of configs iii}
    \item \( |\support \nu| = 2m \). \label{enum: no of configs ii}
\end{enumerate}
Then
\begin{equation*}
    \sum_{\nu \in \Pi_{P,m}}  \phi_\beta  (\nu)\leq 5^{m-|P^+|} \alpha(\beta)^m.
\end{equation*}
\end{lemma}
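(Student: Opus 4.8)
The plan is to turn the sum into a recursive \emph{exploration} estimate. First I would record an elementary reduction: since $\rho$ is unitary we have $\Re\tr\rho(-g)=\Re\tr\rho(g)$, hence $\phi_\beta(-g)=\phi_\beta(g)$; together with $\phi_\beta(0)=1$ and the symmetry of the support of a $2$-form this gives, for every $\nu\in\Pi_{P,m}$,
\[
\phi_\beta(\nu)=\prod_{p\in\support\nu}\phi_\beta(\nu_p)=\prod_{p\in(\support\nu)^+}\phi_\beta(\nu_p)^2,
\]
where $(\support\nu)^+$ has exactly $m$ elements, contains $P^+$, and satisfies $\nu_p\neq 0$ for all $p\in(\support\nu)^+$. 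So the task is to bound $\sum_{\nu\in\Pi_{P,m}}\prod_{p\in(\support\nu)^+}\phi_\beta(\nu_p)^2$.

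Next I would fix, once and for all, total orders on the $3$-cells and on the plaquettes of $B$, and describe an algorithm which, given $\nu\in\Pi_{P,m}$, builds up $\support\nu$ one $\pm$-pair of plaquettes at a time. Starting from the symmetric set $T_0:=P$ with the values $\nu|_P$, suppose that after $j$ steps we have reached a symmetric set $T$ with $P\subseteq T\subsetneq\support\nu$. Since $T$ is symmetric with $P\subseteq T\subsetneq\support\nu$ and $\nu$ is $P$-irreducible, we must have $\nu|_T\notin\Sigma_{P_B}$, i.e.\ $d(\nu|_T)\neq 0$; let $c^\ast=c^\ast(T,\nu|_T)$ be the smallest $3$-cell with $\bigl(d(\nu|_T)\bigr)_{c^\ast}\neq 0$. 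Because $\bigl(d(\nu|_T)\bigr)_{c^\ast}=\sum_{p\in\partial c^\ast\cap T}\nu_p\neq 0$ we have $\partial c^\ast\cap T\neq\emptyset$, while the Bianchi lemma (Lemma~\ref{lemma: Bianchi}) gives $\sum_{p\in\partial c^\ast}\nu_p=0$, so $\partial c^\ast$ contains a plaquette in $\support\nu\setminus T$; let $p^\ast$ be the smallest such one and set $T':=T\cup\{p^\ast,-p^\ast\}$ (with the value $\nu_{p^\ast}$ adjoined). Here $c^\ast$ is determined by $(T,\nu|_T)$ with no branching, and since $|\partial c^\ast|=6$ and $\partial c^\ast\cap T\neq\emptyset$, there are at most $5$ candidates for $p^\ast$ once $c^\ast$ is known. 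Moreover $P$-irreducibility also forces the algorithm to stop exactly when $T=\support\nu$ (that is the only symmetric $T\supseteq P$ with $d(\nu|_T)=0$), so it performs exactly $m-|P^+|$ steps.

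The main estimate is then a downward induction on the number of remaining steps: for every symmetric $T$ with $P\subseteq T$ and every map $\tau\colon T^+\to G\setminus\{0\}$,
\[
\sum_{\substack{\nu\in\Pi_{P,m}\\ \nu_p=\tau_p\ \forall p\in T^+}}\ \prod_{p\in(\support\nu)^+}\phi_\beta(\nu_p)^2\ \leq\ \Bigl(\prod_{p\in T^+}\phi_\beta(\tau_p)^2\Bigr)\bigl(5\alpha(\beta)\bigr)^{\,m-|T^+|}.
\]
The base case $|T^+|=m$ is immediate, since then the only possible $\nu$ is the extension of $\tau$. For the inductive step one splits the sum according to the plaquette $p^\ast$ and the value $g:=\nu_{p^\ast}\in G\setminus\{0\}$ produced by the next exploration step; every such $\nu$ then satisfies $\nu_q=\tau'_q$ on $(T')^+=T^+\cup\{p^\ast\}$, so the inductive hypothesis applies to each term, and summing over the at most $5$ choices of $p^\ast$ and over $g\in G\setminus\{0\}$ multiplies the bound by at most $5\sum_{g\neq 0}\phi_\beta(g)^2=5\alpha(\beta)$ (enlarging the range of $(p^\ast,g)$ beyond the ones actually realised only weakens the inequality in the right direction). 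Applying this with $T=P$ and summing over all $\tau\colon P^+\to G\setminus\{0\}$ gives
\[
\sum_{\nu\in\Pi_{P,m}}\phi_\beta(\nu)\leq\bigl(5\alpha(\beta)\bigr)^{m-|P^+|}\prod_{p\in P^+}\sum_{g\neq 0}\phi_\beta(g)^2=\bigl(5\alpha(\beta)\bigr)^{m-|P^+|}\alpha(\beta)^{|P^+|}=5^{m-|P^+|}\alpha(\beta)^m,
\]
which is the claim. I expect the main obstacle to be the careful set-up of the exploration: making it genuinely deterministic given $\nu$, verifying that $P$-irreducibility is precisely what prevents it from getting stuck before exhausting $\support\nu$ and from running past it, and — crucially — arranging the branching so that each new plaquette costs a factor $5$ rather than the (much larger) number of plaquettes adjacent to the current set.
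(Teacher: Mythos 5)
Your proposal is correct and follows essentially the same route as the paper's proof: a deterministic exploration of \( \support \nu \) starting from \( P \), using the Bianchi lemma to find a new plaquette in the boundary of the first 3-cell where \( d(\nu|_T) \neq 0 \), with at most \( 5 \) candidates per step and a factor \( \sum_{g\neq 0}\phi_\beta(g)^2 \) per plaquette pair. The only cosmetic difference is that you organize the count as a downward induction on the explored set, whereas the paper phrases it as an injection from \( \Pi_{P,m} \) into sequences of partial configurations.
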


\begin{remark}
Lemma~\ref{lemma: counting vortex configurations} is very similar to Lemma~4.9 in~\cite{flv2020}, and is also similar to Lemma~3.11~in~\cite{s1982} and Lemma~1.5~in~\cite{b1984}. We remark however that our proof strategy yields a strictly better upper bound than the corresponding proofs in~\cite{b1984} and~\cite{s1982}. This would yield a strictly larger lower bound on \( \beta \) in the main results of this paper, and hence we give an alternative proof here.
\end{remark}

\begin{proof}[Proof of Lemma~\ref{lemma: counting vortex configurations}]
    Let \( \ell \coloneqq |P_+| \).
    We will prove that Lemma~\ref{lemma: counting vortex configurations} holds by giving a injective map from the set \( \Pi_{P,m} \) to a set of sequences \( \nu^{(\ell)} ,\nu^{(\ell+1)}, \ldots, \nu^{(m)} \) of \( G \)-valued 2-forms on \( P_B \), and then use this map to obtain the desired upper bound.
    To this end, assume that a total ordering of the plaquettes in \( P_B \) and a total ordering of the 3-cells in \( B \) are given.
    If \( \Pi_{P,m} \) is empty, then the desired conclusion trivially holds, and hence we can assume that this is not the case.

    Fix some \( \nu \in \Pi_{P,m} \).
    Let \( \{ p_1,p_2,\ldots , p_\ell \} \coloneqq P^+ \), and define 
        \begin{equation*}
            \nu^{(\ell)} \coloneqq \nu|_P.
        \end{equation*}
        
    Now assume that for some \( k\in \{ \ell,\ell+1, \ldots, m\} \), we are given 2-forms \( \nu^{(\ell)}, \nu^{(\ell+1)} , \ldots, \nu^{(k)} \)  such that 
\begin{enumerate}[label=(\roman*)]
    \item for each \( j \in \{ \ell+1, \ell+2, \ldots, k \} \), we have \( \support \nu^{(j)} = \support \nu^{(j-1)} \sqcup \{ p_j,-p_j \} \) for some \( p_j \in P_B \), and \label{enum: no of configs proof i}
    \item for each \( j \in \{ \ell, \ell+1, \ldots, k \} \) we have \( \nu|_{\support \nu^{(j)}} = \nu^{(j)} \). \label{enum: no of configs proof ii}
\end{enumerate}

Consider first the case that \( d\nu^{(k)} = 0 \). Since, by~\ref{enum: no of configs proof ii}, we have \( \nu|_{\support \nu^{(k)}} = \nu^{(k)} \), it follows from~\ref{enum: no of configs iii} that we must have \( \nu^{(k)} = \nu \).
On the other hand, by~\ref{enum: no of configs proof i}, we have  \( |\support \nu^{(k)}| = 2k \), and hence using~\ref{enum: no of configs ii} we obtain  \( k = m \).
Consequently, if \( k <m \), then \( d\nu^{(k)} \not \equiv 0 \). Equivalently, in this case there is at least one oriented 3-cell \( c  \) in \( B \) for which \( (d \nu^{(k)})_c \neq 0 \). Let \( c_{k+1} \) be the first oriented 3-cell (with respect to the ordering of the 3-cells) for which \( (d\nu^{(k)})_{c_{k+1}} \neq 0\). 
  Since \( \nu \in \Sigma_{P_B} \), we have \( (d\nu)_{c_{k+1}} = 0 \), and consequently there must be at least one plaquette \( p\in \support \nu \cap \bigl( \partial c_{k+1} \smallsetminus  \support \nu^{(k)} \bigr)\). Let \( p_{k+1}  \) be the first such plaquette (with respect to the ordering of the plaquettes).
  Define
        \begin{equation*}
            \nu^{(k+1)}_p \coloneqq \begin{cases}
            \nu_p &\text{if } p = \pm p_{k+1} \cr  
            \nu^{(k)}_p &\text{otherwise.}
            \end{cases}
        \end{equation*}
Note that if \( \nu^{(\ell)},\nu^{(\ell+1)}, \ldots, \nu^{(k)} \)   satisfies (i) and (ii), then so does \( \nu^{(k+1)} \). 
Using induction, we obtain a sequence \( \nu^{(\ell)}, \nu^{(\ell+1)},\ldots, \nu^{(m)} \) of 2-forms with \( \support \nu^{(\ell)} = P \) which satisfies~\ref{enum: no of configs proof i} and~\ref{enum: no of configs proof ii}. We now show that such a sequence must satisfy~\( \nu^{(m)} = \nu \). To this end, note that by~\ref{enum: no of configs proof i}, \( |\support \nu^{(m)} | = 2m \) and by~\ref{enum: no of configs proof ii}, \( \nu|_{\support \nu^{(m)}} = \nu^{(m)} \). Since \( |\support \nu| = 2m \), it follows that \( \nu^{(m)} = \nu \).

We now give an upper bound of the total number of sequences \( (p_{\ell+1},p_{\ell+2},\ldots, p_m ) \) which correspond, as above, to some  \( \nu \in \Pi_{P,m} \).
To obtain such an upper bound, note first that for each  \( k \in \{ \ell, \ell+1, \cdots, m-1 \} \), given the 3-cell \( c_{k+1} \) there are at most five possible choices for \( p_{k+1} \).
Consequently, the total number sequences \( (p_{\ell+1}, \ldots, p_m) \) which can correspond to some \( \nu \) as above is at most \( 5^{m-\ell} \).
Next, note that for each \( k \in \{ 1,2, \ldots, m \} \), we have \( \nu_{p_k} \in G\smallsetminus \{ 0 \} \).
Since for \( \nu \in \Pi_{P,m} \) the mapping \( \nu \mapsto (\nu^{(\ell)}, \nu^{(\ell+1)}, \ldots, \nu^{(m)}) \) 
    is injective, we obtain
\begin{equation*}
    \sum_{\nu \in \Pi_{P,m}} \phi_\beta  (\nu) 
    = 
    \sum_{\nu \in \Pi_{P,m}} \prod_{p \in \support \nu} \phi_\beta  (\nu_p) 
    \leq 
    5^{m-|P^+|} \biggl[ \, \sum_{g \in G\smallsetminus \{ 0 \}}\phi(g)^2 \biggr]^m.
\end{equation*}
Recalling the definition of \( \alpha(\beta) \), the desired conclusion follows. 
\end{proof}

\begin{proof}[Proof of Proposition~\ref{proposition: vortex probability}]
    For each \( m \geq |P^+| \) let \( \Pi_{P,m} \) be defined as in Lemma~\ref{lemma: counting vortex configurations}. Then, by definition, 
    \begin{equation*}
        \Pi_{P,M}^\geq = \bigcup_{m \geq M} \Pi_{P,m}.
    \end{equation*}
    Consequently, by a union bound, we have 
    \begin{equation*} 
        \begin{split}
            &\mu_{B,\beta}\bigl( \Pi_{P,M}^\geq\bigr)
        \leq 
        \sum_{m=M}^\infty
        \mu_{B,\beta}\bigl( \Pi_{P,m}\bigr)
        \leq
        \sum_{m=M}^\infty \sum_{\nu \in \Pi_{P,m}}
        \mu_{B,\beta} \bigl( \{ \omega \in \Sigma_{P_B} \colon \omega|_{\support \nu} = \nu \} \bigr).
        \end{split}
    \end{equation*}
    By Lemma~\ref{lemma: agreement probability}, for any \( m \geq M \) and any \( \nu \in \Pi_{P,m} \subseteq \Sigma_{P_B} \), we have
	\begin{equation*} 
        \mu_{B,\beta} \bigl( \{ \omega \in \Sigma_{P_B} \colon \omega|_{\support \nu} = \nu \} \bigr) 
	        \leq   \phi_\beta  (\nu) .
	\end{equation*}  
    Applying Lemma~\ref{lemma: counting vortex configurations}, we thus obtain
    \begin{equation*} 
        \begin{split}
            & 
        \sum_{m=M}^\infty \sum_{\nu \in \Pi_{P,m}}
        \mu_{B,\beta} \bigl( \{ \omega \in \Sigma_{P_B} \colon \omega|_{\support \nu} = \nu \} \bigr)
        \leq \sum_{m=M}^\infty 5^{m-|P^+|} \alpha(\beta)^m.
            \end{split}
    \end{equation*}
	The right-hand side in the previous equation is a geometric sum, which converges exactly if \({5\alpha(\beta)< 1.} \)
    In this case, by combining the previous equations, we obtain
    \begin{align*}
        \mu_{B,\beta}\bigl( \Pi_{P,M}^\geq\bigr)
        \leq
        \frac{5^{M-|P^+|} \alpha(\beta)^{M}}{1 - 5\alpha(\beta) }
    \end{align*}
    as desired.
\end{proof}

The next result  is a small variation of Proposition~\ref{proposition: vortex probability} which turns out to be useful when we work with pairs of plaquette configurations.

\begin{proposition} \label{proposition: vortex probability 2} 
    Let \( B \) and \( B' \) be two boxes in \( \mathbb{Z}^4 \) with \( B' \subseteq B \),  let  \( \beta \geq 0 \) be such that \( 5\alpha(\beta)< 1 \), and let \( P_0 \subseteq  P_{B'}\) be non-empty and symmetric. For \( M \geq |P_0^+| \), let \( \bar \Pi_{P_0, M}^\geq \) be the set of all pairs \( (\omega,\omega')  \in \Sigma_{P_B} \times \Sigma_{P_{B'}} \)  such that there is \( \nu \in \Sigma_{P_B} \) and \( \nu' \in \Sigma_{P_{B'}} \) with
    \begin{enumerate}[label=(\arabic*), series=vortexfliplemma]
        \item \( \omega|_{\support \nu} = \nu \) and \( \omega'|_{\support \nu'} = \nu' \),  \label{enum: vortex probability i 2}
        \item \( P_0 \subseteq \support \nu \cup \support \nu' \), \label{enum: vortex probability ii 2}
        \item for all symmetric sets \(  P \subseteq \support \nu\) and \(  P' \subseteq \support \nu'\) which satisfies satisfies \( P \sqcup P' = P_0 \),  \( \nu \) is \( P \)-irreducible and \( \nu' \) is \( P' \)-irreducible.
        \label{enum: vortex probability iii 2}  
        \item \( |\support \nu | + |\support \nu' | \geq 2M  \). \label{enum: vortex probability iv 2}
    \end{enumerate}
    Then
    \begin{equation}\label{eq: vortex probability 2} 
        \mu_{B,\beta} \times \mu_{B',\beta}\bigl( \bar \Pi_{P_0,M}^\geq) \leq  \frac{  2^{|P_0^+|} (M-|P_0^+|) \, 5^{M-|P_0^+|} \alpha(\beta)^{M}}{(1 - 5\alpha(\beta))^2 } .
    \end{equation} 
\end{proposition}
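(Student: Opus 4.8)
The plan is to reduce the statement to a counting argument in the spirit of Lemma~\ref{lemma: counting vortex configurations} and Proposition~\ref{proposition: vortex probability}, but carried out simultaneously for the pair $(\nu,\nu')$. First I would decompose $\bar\Pi^\geq_{P_0,M}$ according to the ``split'' of $P_0$: for each pair of symmetric sets $P\sqcup P'=P_0$, and each pair of sizes $m=|\support\nu^+|\geq|P^+|$, $m'=|\support{\nu'}^+|\geq|P'^+|$ with $m+m'\geq M$, let $\bar\Pi_{P,P',m,m'}$ be the set of pairs $(\omega,\omega')$ for which there exist $\nu\in\Sigma_{P_B}$, $\nu'\in\Sigma_{P_{B'}}$ with $\omega|_{\support\nu}=\nu$, $\omega'|_{\support{\nu'}}=\nu'$, $P\subseteq\support\nu$, $P'\subseteq\support{\nu'}$, $\nu$ is $P$-irreducible, $\nu'$ is $P'$-irreducible, $|\support\nu^+|=m$, $|\support{\nu'}^+|=m'$. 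Condition~\ref{enum: vortex probability iii 2} forces irreducibility for \emph{every} admissible split, so in particular $(\omega,\omega')\in\bar\Pi^\geq_{P_0,M}$ implies $(\omega,\omega')\in\bar\Pi_{P,P',m,m'}$ for the particular split it realizes; hence $\bar\Pi^\geq_{P_0,M}\subseteq\bigcup_{P\sqcup P'=P_0}\bigcup_{m+m'\geq M}\bar\Pi_{P,P',m,m'}$, and a union bound applies.

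Next, for a fixed split $(P,P')$ and fixed $(m,m')$ I would bound $\mu_{B,\beta}\times\mu_{B',\beta}(\bar\Pi_{P,P',m,m'})$ by a product. Since the two coordinates are independent under the product measure, and since $\bar\Pi_{P,P',m,m'}$ is contained in the product of $\{\omega: \exists\,\nu\ P\text{-irreducible},\ P\subseteq\support\nu,\ |\support\nu^+|=m,\ \omega|_{\support\nu}=\nu\}$ with the analogous set for $\omega'$, I can apply Lemma~\ref{lemma: agreement probability} to each candidate $\nu$ (resp.\ $\nu'$) and then Lemma~\ref{lemma: counting vortex configurations} to sum the activities: this gives
\begin{equation*}
    \mu_{B,\beta}\times\mu_{B',\beta}(\bar\Pi_{P,P',m,m'})\leq \Bigl(5^{m-|P^+|}\alpha(\beta)^m\Bigr)\Bigl(5^{m'-|P'^+|}\alpha(\beta)^{m'}\Bigr)=5^{m+m'-|P_0^+|}\alpha(\beta)^{m+m'},
\end{equation*}
using $|P^+|+|P'^+|=|P_0^+|$ since $P\sqcup P'=P_0$ is a disjoint symmetric union. (One subtlety: if $P$ or $P'$ is empty, a $P$-irreducible configuration is forced to be $0$, so either the set is trivial or that split contributes nothing; I would note this degenerate case does not cause trouble.)

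Finally I would sum over the data. The number of symmetric splits $P\sqcup P'=P_0$ is at most $2^{|P_0^+|}$ (choose, for each positively oriented plaquette in $P_0$, whether it goes to $P$ or to $P'$). Writing $k=m+m'$, the number of pairs $(m,m')$ with $m+m'=k$, $m\geq|P^+|$, $m'\geq|P'^+|$ is at most $k-|P_0^+|+1\leq$ (for the relevant range) of order $k-|P_0^+|$; summing $\sum_{k\geq M}(k-|P_0^+|)\,5^{k-|P_0^+|}\alpha(\beta)^k$ and factoring out $5^{-|P_0^+|}$, this is a series $\sum_{j\geq M-|P_0^+|}(j+\text{const})(5\alpha(\beta))^{j}\cdot(\cdots)$ which, using $5\alpha(\beta)<1$ and the standard estimate $\sum_{j\geq n} j x^{j-n}\leq$ a geometric-type bound, one arranges to be $\leq (M-|P_0^+|)\,5^{M-|P_0^+|}\alpha(\beta)^M/(1-5\alpha(\beta))^2$. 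Multiplying by the $2^{|P_0^+|}$ splits yields~\eqref{eq: vortex probability 2}.

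The main obstacle I anticipate is bookkeeping the combinatorial constants so that the final bound comes out \emph{exactly} as stated — in particular getting the linear factor $(M-|P_0^+|)$ (rather than something like $(M-|P_0^+|+1)$ or $M^2$) out of the double sum over $(m,m')$ and $k$, and correctly handling the boundary/degenerate splits where one of $P,P'$ is empty. The conceptual content — independence plus two applications of the single-configuration counting lemma — is routine given the earlier lemmas; the delicate part is the arithmetic of the geometric-type series and making the $2^{|P_0^+|}$ and $(1-5\alpha(\beta))^{-2}$ factors appear cleanly.
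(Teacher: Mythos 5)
Your proposal is correct and follows essentially the same route as the paper: decompose over symmetric splits \(P\sqcup P'=P_0\) and over how the total size is distributed between \(\nu\) and \(\nu'\), use independence of the product measure, bound each factor by Lemma~\ref{lemma: agreement probability} plus Lemma~\ref{lemma: counting vortex configurations} (the paper packages these two steps as Proposition~\ref{proposition: vortex probability}, treating the empty split as a degenerate case exactly as you do), and then sum the resulting geometric series over at most \(2^{|P_0^+|}\) splits. Your worry about the final linear factor is well founded: the paper's own computation actually produces \((M-|P_0^+|+1)\) rather than the \((M-|P_0^+|)\) appearing in the stated bound, so you should not expect to do better than the \(+1\) version, which is what is effectively used downstream anyway.
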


\begin{proof}
    For non-empty and symmetric sets \(   P \subseteq P_B \), and \( m \geq | P^+| \), recall the definition of \( \Pi_{B,  P,m}^\geq \) from Proposition~\ref{proposition: vortex probability}. 
    For \( m_0 \geq 0 \), define
    \begin{equation*}
        \Pi_{B,\emptyset,m_0}^\geq \coloneqq 
        \begin{cases}
            \Sigma_{P_B} &\text{if } m_0 = 0,\cr 
            \emptyset &\text{else.}
        \end{cases}
    \end{equation*}
    Analogously, for  non-empty and symmetric sets \(   P \subseteq P_{B'} \), and \( m \geq | P^+| \), recall the definition of \( \Pi_{B',  P,m}^\geq \) from Proposition~\ref{proposition: vortex probability}, and for \( m_0 \geq 0 \), define
    \begin{equation*}
        \Pi_{B',\emptyset,m_0}^\geq \coloneqq 
        \begin{cases}
            \Sigma_{P_{B'}} &\text{if } m_0 = 0,\cr 
            \emptyset &\text{else.}
        \end{cases}
    \end{equation*}
    Then, by definition, we have
    \begin{equation*}
        \bar \Pi_{P_0,M}^\geq \subseteq \bigcup_{\substack{P \subseteq P_0 \mathrlap{\colon}\\ P = -P}} \bigcup_{m_0 = |P^+|}^{M-|P^+\smallsetminus P^+|}  \Pi_{B,P,m_0}^\geq \times \Pi_{B',P_0\smallsetminus P,M-m_0}^\geq.
    \end{equation*} 
    Consequently, by a union bound, we have
    \begin{equation}\label{eq: step 1}
        \mu_{B,\beta} \times \mu_{B',\beta} \bigl( \bar \Pi_{P_0,M}^\geq \bigr)
        \leq 
        \sum_{\substack{P \subseteq P_0 \mathrlap{\colon}\\ P = -P}} \sum_{m_0 = |P^+|}^{M-|P_0^+\smallsetminus P^+|}   
        \mu_{B,\beta} \bigl( \Pi_{B,P,m_0}^\geq \bigr) \,  \mu_{B',\beta} \bigl( \Pi_{B',P_0\smallsetminus P,M-m_0}^\geq \bigr).
    \end{equation} 
    If \( P = \emptyset \), then 
    \begin{equation*}
        \begin{split}
            &\sum_{m_0 = |P^+|}^{M-|P_0^+\smallsetminus P^+|}   
            \mu_{B,\beta} \bigl( \Pi_{B,P,m_0}^\geq \bigr) \,  
            \mu_{B',\beta} \bigl( \Pi_{B',P_0\smallsetminus P,M-m_0}^\geq \bigr)
            =
            \mu_{B,\beta} \bigl( \Pi_{B,\emptyset, 0}^\geq \bigr) \,  \mu_{B',\beta} \bigl( \Pi_{B',P_9 ,M}^\geq \bigr) 
            \\&\qquad =
            \mu_{B',\beta} \bigl( \Pi_{B',P_0 ,M}^\geq \bigr).
        \end{split}
    \end{equation*} 
    Similarly, if \( P = P_0 \), then 
    \begin{equation*}
        \begin{split}
            &\sum_{m_0 = |P^+|}^{M-|P_0^+\smallsetminus P^+|}   
            \mu_{B,\beta} \bigl( \Pi_{B,P,m_0}^\geq \bigr) \,  
            \mu_{B',\beta} \bigl( \Pi_{B',P_0\smallsetminus P,M-m_0}^\geq \bigr)
            = 
            \mu_{B,\beta} \bigl( \Pi_{B,P_0,M}^\geq \bigr) \,  
            \mu_{B',\beta} \bigl( \Pi_{B',\emptyset,0}^\geq \bigr) 
            \\&\qquad=
            \mu_{B,\beta} \bigl( \Pi_{B,P_0,M}^\geq \bigr).
        \end{split}
    \end{equation*} 
    Consequently, we have
    \begin{equation}\label{eq: step 2}
        \begin{split} 
            &\sum_{\substack{P \subseteq P_0 \mathrlap{\colon}\\ P = -P}} \sum_{m_0 = |P^+|}^{M-|P_0^+\smallsetminus P^+|}   
            \mu_{B,\beta} \bigl( \Pi_{B,P,m_0}^\geq \bigr) \,  
            \mu_{B',\beta} \bigl( \Pi_{B',P_0\smallsetminus P,M-m_0}^\geq \bigr)
            \\&\qquad =
            \mu_{B,\beta} \bigl( \Pi_{B,P_0,M}^\geq \bigr)
            +
            \mu_{B',\beta} \bigl( \Pi_{B',P_0,M}^\geq \bigr)
            \\&\qquad\qquad+
            \sum_{\substack{\emptyset \subsetneq P \subsetneq P_0 \mathrlap{\colon}\\  P = -P}} \sum_{m_0 = |P^+|}^{M-|P_0^+\smallsetminus P^+|}   
            \mu_{B,\beta} \bigl( \Pi_{B,P,m_0}^\geq \bigr) \,  
            \mu_{B',\beta} \bigl( \Pi_{B',P_0\smallsetminus P,M-m_0}^\geq \bigr).
        \end{split}
    \end{equation}  
    Since \( 5\alpha(\beta)< 1 \), we can now apply Proposition~\ref{proposition: vortex probability} to obtain
    \begin{equation}\label{eq: step 3}
        \begin{split}
            &\mu_{B,\beta} \bigl( \Pi_{B,P_0,M}^\geq \bigr)
            +
            \mu_{B',\beta} \bigl( \Pi_{B',P_0,M}^\geq \bigr)
            +
            \sum_{\substack{\emptyset \subsetneq P \subsetneq P_0 \mathrlap{\colon}\\  P = -P}} \sum_{m_0 = |P^+|}^{M-|P_0^+\smallsetminus P^+|}   
            \mu_{B,\beta} \bigl( \Pi_{B,P,m_0}^\geq \bigr) \,  \mu_{B',\beta} \bigl( \Pi_{B',P_0\smallsetminus P,M-m_0}^\geq \bigr)
            \\&\qquad\leq
            2\frac{5^{M-|P_0^+|} \alpha(\beta)^{M}}{1 - 5\alpha(\beta) } 
            +
            \sum_{\substack{\emptyset \subsetneq P \subsetneq P_0 \mathrlap{\colon}\\  P = -P}} \sum_{m_0 = |P^+|}^{M-|P_0^+\smallsetminus P^+|}
            \frac{5^{m_0-|P^+|} \alpha(\beta)^{m_0}}{1 - 5\alpha(\beta) }
            \, \frac{5^{(M-m_0)-|P_0^+ \smallsetminus P^+|} \alpha(\beta)^{M-m_0}}{1 - 5\alpha(\beta) }
            \\&\qquad=
            2\frac{5^{M-|P^+|} \alpha(\beta)^{M}}{1 - 5\alpha(\beta) } 
            +
            \sum_{\substack{\emptyset \subsetneq P \subsetneq P_0 \mathrlap{\colon}\\  P = -P}} \sum_{m_0 = |P^+|}^{M-|P_0^+\smallsetminus P^+|}
            \frac{5^{M-|P_0^+|} \alpha(\beta)^{M}}{(1 - 5\alpha(\beta))^2 } 
            \\&\qquad=
            2\frac{5^{M-|P_0^+|} \alpha(\beta)^{M}}{1 - 5\alpha(\beta) } 
            + (2^{|P_0^+|}-2) (M-|P_0^+|+1)
            \frac{5^{M-|P_0^+|} \alpha(\beta)^{M}}{(1 - 5\alpha(\beta))^2 } 
            \\&\qquad\leq
            2^{|P_0^+|} (M-|P_0^+|+1)
            \frac{5^{M-|P_0^+|} \alpha(\beta)^{M}}{(1 - 5\alpha(\beta))^2 }.
        \end{split}
    \end{equation}
    Combining~\eqref{eq: step 1},~\eqref{eq: step 2} and~\eqref{eq: step 3}, we finally obtain~\eqref{eq: vortex probability 2} as desired.
\end{proof}

The last result of this section is the following proposition, which provides a matching lower bound for the inequality in Lemma~\ref{lemma: agreement probability}.

\begin{proposition}\label{proposition: lower bound of containing a specific vortex}
    Let \( B \) be a box in \( \mathbb{Z}^4 \), let \( \beta >0 \) be such that \( 5\alpha(\beta)< 1 \), and let \( \nu \in \Sigma_{P_B} \) be such that \( \dist_B(\support \nu, \delta B) \geq 7 \).
	Then 
	\begin{equation}\label{eq: lower bound of containing a specific vortex}
	     \biggl(1 -  \frac{5^{5} \alpha(\beta)^{6}|\support \nu | }{2(1 - 5\alpha(\beta)) } \biggr) \, \phi_\beta  (\nu)  \leq \mu_{B,\beta} \bigl( \{ \omega \in \Sigma_{P_B} \colon \omega|_{\support \nu} = \nu \} \bigr) \leq   \phi_\beta  (\nu).
	\end{equation} 
\end{proposition}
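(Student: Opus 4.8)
\emph{Approach.} The upper bound in~\eqref{eq: lower bound of containing a specific vortex} is exactly Lemma~\ref{lemma: agreement probability}, so only the lower bound requires work. The plan is to convert the identity of Lemma~\ref{lemma: ratio} into a lower bound by estimating $\mu_{B,\beta}\bigl(\{\omega\colon\omega|_{\support\nu}=0\}\bigr)$ from below through a union bound, each term of which is controlled by Proposition~\ref{proposition: vortex probability}. By Lemma~\ref{lemma: ratio},
\[
    \mu_{B,\beta}\bigl(\{\omega\in\Sigma_{P_B}\colon\omega|_{\support\nu}=\nu\}\bigr) = \phi_\beta(\nu)\,\mu_{B,\beta}\bigl(\{\omega\in\Sigma_{P_B}\colon\omega|_{\support\nu}=0\}\bigr),
\]
and since $\{\omega|_{\support\nu}=0\}$ is the complement of $\{\omega_p\neq 0\text{ for some }p\in\support\nu\}$, it suffices to prove
\[
    \mu_{B,\beta}\bigl(\{\omega\in\Sigma_{P_B}\colon\omega_p\neq 0\text{ for some }p\in\support\nu\}\bigr)\leq\frac{5^{5}\alpha(\beta)^{6}\,|\support\nu|}{2(1-5\alpha(\beta))}.
\]
Because $\{\omega_p\neq0\}=\{\omega_{-p}\neq0\}$, a union bound over the $|\support\nu|/2$ positively oriented plaquettes $p\in(\support\nu)^+$ reduces this to the estimate $\mu_{B,\beta}\bigl(\{\omega_p\neq0\}\bigr)\leq 5^{5}\alpha(\beta)^{6}/(1-5\alpha(\beta))$ for each fixed $p\in\support\nu$.

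\emph{The key inclusion.} Fix such a $p$. The heart of the proof is the inclusion $\{\omega\in\Sigma_{P_B}\colon\omega_p\neq0\}\subseteq\Pi^{\geq}_{B,\{p,-p\},6}$, with $\Pi^{\geq}_{B,P,M}$ as in Proposition~\ref{proposition: vortex probability}. Given $\omega$ with $\omega_p\neq0$, apply Lemma~\ref{lemma: irreducible form relative set} with the symmetric set $\{p,-p\}$ to get a symmetric $P'\subseteq\support\omega$ with $\nu'\coloneqq\omega|_{P'}\in\Sigma_{P_B}$, with $\{p,-p\}\subseteq\support\nu'$, and with $\nu'$ being $\{p,-p\}$-irreducible; note $\nu'\neq0$ since $\nu'_p=\omega_p\neq0$, and $\omega|_{\support\nu'}=\nu'$. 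All conditions placing $\omega$ in $\Pi^{\geq}_{B,\{p,-p\},6}$ are then immediate except the size bound $|\support\nu'|\geq 12$, which is where the hypothesis $\dist_B(\support\nu,\delta B)\geq7$ enters. First, $\{p,-p\}$-irreducibility forces $\support\nu'$ to be connected in the graph $\mathcal{G}(\nu',\nu')$: if it were disconnected, restricting $\nu'$ to the connected component containing $p$ would yield a strictly smaller element of $\Sigma_{P_B}$ still containing $\{p,-p\}$ — for each $3$-cell $c$ all plaquettes of $\partial c\cap\support\nu'$ lie in one component, so the Bianchi relation (Lemma~\ref{lemma: Bianchi}) forces the restriction to be closed — contradicting irreducibility. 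Now suppose $|\support\nu'|\leq11$. If $\support\nu'$ contained no boundary plaquette of $P_B$, then Lemma~\ref{lemma: small vortices} would force $|\support\nu'|=12$ or $|\support\nu'|\geq22$, a contradiction. Hence $\support\nu'$ contains a boundary plaquette $q$, and since $\nu'\in\Sigma_{P_B}$ is connected and contains both $p$ and $q$, the definition of $\dist_B$ gives $\dist_B(p,q)\leq\tfrac12|\support\nu'|\leq\tfrac{11}{2}<7$; but then $\dist_B(\support\nu,\delta B)\leq\dist_B(p,q)<7$, contradicting the hypothesis. Therefore $|\support\nu'|\geq12=2\cdot6$, and the inclusion holds.

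\emph{Conclusion.} Since $5\alpha(\beta)<1$ and $|\{p,-p\}^+|=1\leq6$, Proposition~\ref{proposition: vortex probability} with $P=\{p,-p\}$ and $M=6$ yields
\[
    \mu_{B,\beta}\bigl(\{\omega_p\neq0\}\bigr)\leq\mu_{B,\beta}\bigl(\Pi^{\geq}_{B,\{p,-p\},6}\bigr)\leq\frac{5^{6-1}\alpha(\beta)^{6}}{1-5\alpha(\beta)}=\frac{5^{5}\alpha(\beta)^{6}}{1-5\alpha(\beta)}.
\]
Summing over $p\in(\support\nu)^+$ and feeding the bound back into the identity of Lemma~\ref{lemma: ratio} produces the lower bound in~\eqref{eq: lower bound of containing a specific vortex}.

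\emph{Main obstacle.} I expect the delicate point to be the boundary bookkeeping in the second paragraph: one must be sure that the minimal irreducible sub-configuration witnessing $\omega_p\neq0$ cannot be a configuration of fewer than $12$ plaquettes that leaks through $\delta P_B$, and it is exactly the combination of the connectedness of irreducible configurations with the quantitative distance hypothesis that excludes this. A secondary point to pin down is the precise meaning of $\delta B$ in the hypothesis and the verification that $\dist_B$ from any plaquette of $\support\nu$ to any boundary plaquette is at least $7$; everything else is a routine union bound.
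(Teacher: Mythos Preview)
Your proposal is correct and follows essentially the same approach as the paper: factor via Lemma~\ref{lemma: ratio}, lower-bound $\mu_{B,\beta}(\{\omega|_{\support\nu}=0\})$ by a union bound over $p\in(\support\nu)^+$, and control each $\mu_{B,\beta}(\{\omega_p\neq0\})$ by extracting a $\{p,-p\}$-irreducible sub-configuration (Lemma~\ref{lemma: irreducible form relative set}) and applying Proposition~\ref{proposition: vortex probability} with $M=6$. Your treatment of the boundary case---arguing that irreducibility forces connectedness and then using the distance hypothesis to rule out small configurations touching $\delta P_B$---is in fact more explicit than the paper's, which invokes Lemma~\ref{lemma: small vortices} directly and leaves the role of the hypothesis $\dist_B(\support\nu,\delta B)\geq 7$ implicit.
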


\begin{remark}
    For any given \( \nu \in \Sigma_{P_B} \), he ratio between the upper and lower bound in~\eqref{eq: lower bound of containing a specific vortex} can be made arbitrarily close to one by taking \( \beta \) sufficiently large.
\end{remark}

\begin{proof}
    Since the upper bound in~\eqref{eq: lower bound of containing a specific vortex} is a direct consequence of Lemma~\ref{lemma: agreement probability}, we only need to show that the lower bound in~\eqref{eq: lower bound of containing a specific vortex} holds. To this end, note first that 
	\begin{equation}\label{eq: product}
	    \begin{split}
	    &\mu_{B,\beta} \bigl( \{ \omega \in \Sigma_{P_B} \colon \omega|_{\support \nu} = \nu \} \bigr) 
	    \\&\qquad=
	    \frac{
	    \mu_{B,\beta} \bigl( \{ \omega \in \Sigma_{P_B} \colon \omega|_{\support \nu} = \nu \} \bigr) }{
	    \mu_{B,\beta} \bigl( \{ \omega \in \Sigma_{P_B} \colon \omega|_{\support \nu} = 0 \} \bigr) } \cdot 
	    \mu_{B,\beta} \bigl( \{ \omega \in \Sigma_{P_B} \colon \omega|_{\support \nu} = 0 \} \bigr),
	    \end{split}
	\end{equation}

	Assume that \( p \in P_B \) and that  \( \omega' \in \Sigma_{P_B} \) is such that \( \omega'|_p \neq 0 \). By~Lemma~\ref{lemma: irreducible form relative set} there exists an irreducible and non-trivial plaquette configuration \( \nu' \in \Sigma_{P_B} \) with \( p \in \support \nu' \) and \( \omega'|_{\support \nu'} = \nu' \). By Lemma~\ref{lemma: small vortices}, any non-trivial plaquette configuration \( \nu' \in \Sigma_{P_B} \) must satisfy \( |(\support \nu')^+ | \geq 6 \). Consequently, by Proposition~\ref{proposition: vortex probability}, applied with \( P \coloneqq \{ p,-p \} \) and \( M = 6 \), we have 
    \begin{equation*}
         \mu_{B,\beta} \bigl( \{ \omega \in \Sigma_{P_B} \colon \omega|_p \neq 0 \} \bigr) \leq   \frac{5^{6-1} \alpha(\beta)^{6}}{1 - 5\alpha(\beta) } .
    \end{equation*}
    Next note that for \( \omega \in \Sigma_{P_B} \), the event \( \omega|_{\support \nu} \neq 0 \) is equivalent to that \( \support \omega \cap (\support \nu)^+ \neq \emptyset \).
    Consequently, by a union bound, we obtain
    \begin{equation}\label{eq: the lower bound}
         \mu_{B,\beta} \bigl( \{ \omega \in \Sigma_{P_N} \colon \omega|_{\support \nu} = 0 \} \bigr) \geq 1 - \sum_{p \in (\support \nu)^+} \frac{5^{6-1} \alpha(\beta)^{6}}{1 - 5\alpha(\beta) } .
    \end{equation}
    On the other hand, by Lemma~\ref{lemma: ratio}, we have
	\begin{equation}\label{eq: first ratio}
	    \frac{\mu_{B,\beta} \bigl( \{ \omega \in \Sigma_{P_B} \colon \omega|_{\support \nu} = \nu \} \bigr)}{\mu_{B,\beta} \bigl( \{ \omega \in \Sigma_{P_B} \colon \omega|_{\support \nu} = 0 \} \bigr)} =   \phi_\beta  (\nu).
	\end{equation} 
    Combining~\eqref{eq: product},~\eqref{eq: the lower bound} and~\eqref{eq: first ratio}, we obtain the desired lower bound.
\end{proof}

\section{Paths, optimal paths and geodesics}\label{section: paths}

The following definition, which we recall from the introduction, will be central in the rest of this paper.
\begin{definition}\label{definition: graph definition}
    Given two boxes \( B \) and \( B' \) in \( \mathbb{Z}^4 \),  \( \omega  \in \Sigma_{P_B} \) and \(  \omega' \in \Sigma_{P_{B'}} \), let \( \mathcal{G}(\omega,\omega' ) \) be the graph with vertex set \( \support \omega \cup \support \omega' \) and an edge between two distinct plaquettes \( p_1,p_2 \in  \support \omega \cup \support \omega' \) if either \( \hat \partial p_1 \cap \hat \partial p_2 \neq \emptyset  \) or \( \hat \partial p_1 \cap \hat \partial(-p_2) \neq \emptyset  \). 
\end{definition}

If \( p_1,p_2 \in \support \omega \cup \support \omega'  \) are neighbors in \( \mathcal{G}(\omega,\omega' ) \), we write \( p_1 \sim p_2 \).

The main reason for introducing the graph \(  \mathcal{G}(\omega,\omega' ) \) is to be able to talk about paths and geodesics in this graph. To be able to talk about general properties of such paths, we introduce the following notation.

\begin{definition}\label{def: path}
    Let \( B \) be a box in \( \mathbb{Z}^4 \), and let the plaquettes \(  p_1,p_2, \ldots, p_m \in P_B \) be distinct. If for each \( k \in \{ 1,2, \ldots, m-1 \}\) we have  \( \hat \partial p_k \cap \pm \hat \partial p_{k+1} \neq \emptyset \),
    then \( \mathcal{P} \coloneqq (p_1, p_2, \ldots, p_m) \) is said to be a \emph{path} (from \( p_1 \) to \( p_m \)) in \( P_B \).
\end{definition}

When \( \mathcal{P} = (p_1,p_2, \ldots, p_m )\) is a path, we let \( \image (\mathcal{P}) \) denote the set \( \{ p_1,p_2, \ldots, p_m \} \), and let \( |\mathcal{P}| \coloneqq |\image (\mathcal{P})| = m \).

Note that if \( B \) and \( B' \) are two boxes in \( \mathbb{Z}^4 \) with \( B' \subseteq B \) and \( \mathcal{P} \) is a path in \( P_{B'} \), then \( \mathcal{P} \) is a path in \( P_{B'} \).
Also, if \( \omega \in \Sigma_{P_B} \), \(  \omega'\in \Sigma_{P_{B'}} \), and \( \mathcal{P} \) is a path in \( \mathcal{G}(\omega,\omega') \), then \( \mathcal{P} \) is a path in  \( P_B \). Conversely, if \( \mathcal{P} \) is a path in \( P_B \), \( g \in G \smallsetminus \{ 0 \} \),  and we define \( \omega \in \Sigma_{P_B}\) and \( \omega' \in \Sigma_{P_{B'}}\) by letting \( \omega' \equiv 0 \) and \(  \omega_p = g \) for all \( p \in P_B^+ \), then \( \mathcal{P} \) is a path in \( \mathcal{G}(\omega,\omega') \). Note that in this example, \( \omega \) has full support. The next lemma shows that in some cases, there is \( \omega  \in \Sigma_{P_B}\) and \( \omega'\in \Sigma_{P_{B'}}\) with relatively small supports such that a given path \( \mathcal{P} \) is a path in \( \mathcal{G}(\omega,\omega') \).

\begin{lemma}\label{lemma: small configuration containing a path}
    Let \( B \) be a box in \( \mathbb{Z}^4 \), and let \( \mathcal{P} = (p_1,p_2,\ldots, p_m) \) be a shortest path from \( p_1 \) to \( p_m \)  in \( P_B \). Then there is \( \omega^{(0)},\omega^{(1)} \in \Sigma_{P_B} \) such that 
    \begin{enumerate}[label=(\arabic*)]
        \item \( \mathcal{P} \) is a path in \( \mathcal{G} ( \omega^{(0)} , \omega^{(1)}) \), and \label{item: confix with path i}
        \item \( |\support \omega^{(0)}| + |\support \omega^{(1)}| \leq 12|\mathcal{P}| \).\label{item: confix with path ii} 
    \end{enumerate} 
\end{lemma}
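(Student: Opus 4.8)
The plan is to realize $\mathcal{P}$ inside $\mathcal{G}(\omega^{(0)},\omega^{(1)})$ by building $\omega^{(0)}$ and $\omega^{(1)}$ out of elementary single-edge configurations. Recall from~\eqref{d1dxjp} (and Lemma~\ref{lemma: minimal vortices}) that for every oriented edge $e\in E_B$ and every $g\in G\smallsetminus\{0\}$ the configuration $d(g\,e)$ lies in $\Sigma_{P_B}$, its support has at most $12$ elements, and $\bigl(d(g\,e)\bigr)_p=\pm g$ exactly when $\pm e\in\partial p$ (and $0$ otherwise); in particular $p\in\support d(g\,e)$ whenever $e\in\partial p$. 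Fix $g\in G\smallsetminus\{0\}$. I would choose, for each $k$, one edge $e_k\in\partial p_k$ (in a way specified below), put the even-indexed ones into one configuration and the odd-indexed ones into the other, and set
\begin{equation*}
    \omega^{(0)}\coloneqq d\Bigl(\textstyle\sum_{k \text{ even}}g\,e_k\Bigr),\qquad
    \omega^{(1)}\coloneqq d\Bigl(\textstyle\sum_{k \text{ odd}}g\,e_k\Bigr).
\end{equation*}
These lie in $\Sigma_{P_B}$, being $d$ of $1$-forms supported in $E_B$. Since $\support\omega^{(0)}\subseteq\bigcup_{k \text{ even}}\support d(g\,e_k)$ and likewise for $\omega^{(1)}$, we get $|\support\omega^{(0)}|+|\support\omega^{(1)}|\le 12m=12|\mathcal{P}|$, which is~\ref{item: confix with path ii}. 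And once we know $p_k\in\support\omega^{(0)}\cup\support\omega^{(1)}$ for all $k$, part~\ref{item: confix with path i} follows at once: $\image(\mathcal{P})$ then lies in the vertex set of $\mathcal{G}(\omega^{(0)},\omega^{(1)})$, and the adjacency defining its edges is precisely the relation $\hat\partial p_k\cap\pm\hat\partial p_{k+1}\neq\emptyset$ used to define a path in $P_B$ (Definition~\ref{def: path}), which $\mathcal{P}$ satisfies by hypothesis.

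Thus the point is to choose the $e_k$ so that $(\omega^{(i)})_{p_k}\neq 0$ whenever $k\equiv i\pmod 2$. Since $\bigl(d(g\,e_l)\bigr)_{p_k}\neq 0$ only if the unoriented edge underlying $e_l$ is an edge of $p_k$ (equivalently, $p_k$ and $p_l$ share an edge), the sum $(\omega^{(0)})_{p_k}=\sum_{l \text{ even}}\bigl(d(g\,e_l)\bigr)_{p_k}$ only picks up contributions from even $l$ with $p_k$ and $p_l$ sharing an edge. I will use two elementary facts: (a) two distinct plaquettes share at most one edge (a plaquette is determined by any three of its corners), and (b) if two distinct plaquettes of $\mathbb{Z}^4$ share an edge then their graph distance in $P_B$ is at most $2$ --- if they are not coplanar they already lie in a common $3$-cell and so are adjacent, and if they are coplanar and share an edge $e$ then the plaquette spanned by $e$ and a fresh coordinate direction shares a $3$-cell with each of them. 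Since $\mathcal{P}$ is a shortest path, the graph distance in $P_B$ between $p_k$ and $p_l$ equals $|k-l|$, so by (b) $p_k$ and $p_l$ can share an edge only when $|k-l|\le 2$; among same-parity indices this means $l\in\{k-2,k,k+2\}$.

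It therefore suffices to pick, for each $k$, an edge of $p_k$ that is not an edge of $p_{k-2}$ nor of $p_{k+2}$ (ignoring indices outside $\{1,\dots,m\}$); by (a) this excludes at most two of the four edges of $p_k$, so such an edge exists. Taking $e_k$ to be an orientation of such an edge, for even $k$ the sum $(\omega^{(0)})_{p_k}$ receives $\pm g$ from $l=k$, it receives $0$ from $l=k\pm2$ (the underlying edge of $e_{k\pm2}$ is, by construction, not an edge of $p_k$), and it receives $0$ from every other even $l$ (then $|k-l|\ge 4$, so $p_k$ and $p_l$ share no edge); hence $(\omega^{(0)})_{p_k}=\pm g\neq0$. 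Symmetrically $(\omega^{(1)})_{p_k}=\pm g\neq 0$ for odd $k$, so $p_k\in\support\omega^{(0)}\cup\support\omega^{(1)}$ for all $k$, which finishes the proof.

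The delicate point is the cancellation bookkeeping: one must be certain that the only surviving term in $(\omega^{(i)})_{p_k}$ is the ``diagonal'' one coming from $p_k$ itself. This is exactly why two configurations are used rather than one --- in a single configuration $d(\sum_k g\,e_k)$ the plaquette $p_k$ could carry edges of each of $p_{k-2},p_{k-1},p_{k+1},p_{k+2}$, i.e.\ possibly all four of its edges --- and why fact (b) is needed: it is what confines the conflicting same-parity indices to $k\pm2$ and thereby leaves an admissible edge at every $p_k$.
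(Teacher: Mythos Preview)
Your proof is correct. Both you and the paper build \(\omega^{(i)}=d\sigma^{(i)}\) from \(1\)-forms supported on carefully chosen edges, split by parity into two configurations; the difference lies only in the edge-selection rule. The paper proceeds greedily: at step \(j\) it picks an edge \(e_j\) in the boundary of the first plaquette not yet ``hit'' by \(\pm\{e_1,\dots,e_{j-1}\}\), and then argues (using the shortest-path property) that the index set \(J_k=\{j:e_j\in\pm\partial p_k\}\) has size one or two with consecutive indices, so each \(p_k\) lands in exactly one of \(\support\omega^{(0)},\support\omega^{(1)}\). You instead assign one edge \(e_k\in\partial p_k\) per plaquette and use the geometric facts (a) two distinct plaquettes share at most one edge and (b) edge-sharing plaquettes are at graph distance at most \(2\), which via the geodesic property confines the possibly interfering same-parity indices to \(k\pm 2\); you then simply exclude those two edges when choosing \(e_k\). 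Your route makes the non-cancellation at \(p_k\) transparent (a single surviving \(\pm g\) term by construction), at the cost of using exactly \(m\) edges where the paper may use fewer---though this does not affect the bound \(12|\mathcal{P}|\). The paper's route avoids invoking fact (b) explicitly but requires a slightly more delicate combinatorial argument about \(J_k\). One minor point: your fact~(b) in the coplanar case needs an intermediate plaquette \(q\in P_B\), which exists provided \(B\) is a genuine \(4\)-dimensional box (there are four candidate \(q\)'s, and all fail only when two of the coordinate extents of \(B\) collapse).
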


\begin{proof}
      
    We first choose a sequence of oriented edges as follows.
    \begin{enumerate}[label=(\roman*)]
        \item Let \( k_1 \coloneqq  1 \). If \( m \geq 2 \) and \( \partial p_1 \cap \pm \partial p_2 \neq \emptyset \), pick \( e_1 \in \partial p_1 \cap \pm \partial p_2 \). Otherwise, pick any \( e_1 \in \partial p_1 \).
    \end{enumerate} 
    Now assume that \( e_1, e_2,\ldots, e_{j-1} \) are given for some \( j \geq 2 \). If there is \( k \in \{ 1, \ldots, m \} \) such that  \(  \partial p_k \cap \pm \{ e_1,e_2, \ldots, e_{j-1} \} = \emptyset \), we choose \( e_{j} \) as follows.
    \begin{enumerate}[label=(\roman*), resume]
        \item Let \(  k_{j} \geq 1 \) be the smallest integer such that \( \partial p_{k_j} \cap \pm \{ e_1, e_2, \ldots, e_{j-1} \} = \emptyset \). 
    If \( k_{j}+1 \leq m \) and \( \partial p_{k_j} \cap \pm \partial p_{k_j+1} \neq \emptyset \), pick \( e_{j} \in \partial p_{k_j} \cap \pm \partial p_{k_j+1} \). Otherwise, pick any \( e_{j} \in \partial p_{k_j} \). 
    \end{enumerate}
    Let \( \ell \) be the smallest positive integer which is such that
    \(  \partial p_k \cap \pm \{ e_1,e_2, \ldots, e_\ell \} \neq \emptyset \)
    for all \( k \in \{ 1,2,\ldots, m\} \).
    
    Fix any \( g \in G \smallsetminus \{ 0 \} \) and define \( \sigma^{(0)},\sigma^{(1)} \in \Sigma_{E_N} \) by 
    \begin{equation*}
        \sigma^{(0)}_e = \sum_{\substack{j \in \{ 1,2, \ldots, \ell \} \mathrlap{\colon} \\ j\text{ is even}}} \bigl( g \mathbb{1}_{e= e_j}-g \mathbb{1}_{e= -e_j} \bigr),\quad e \in E_B
    \end{equation*}
    and 
    \begin{equation*}
        \sigma^{(1)}_e = \sum_{\substack{j \in \{ 1,2, \ldots, \ell \} \mathrlap{\colon}\\ j\text{ is odd}}}\bigl( g \mathbb{1}_{e= e_j}-g \mathbb{1}_{e= -e_j} \bigr),\quad e \in E_B.
    \end{equation*}
    We will show that \( \omega^{(0)} \coloneqq d\sigma^{(0)} \) and \( \omega^{(1)} \coloneqq d\sigma^{(1)} \) have the desired properties.

    Since \( \mathcal{P} \) is a path, \ref{item: confix with path i} is equivalent to that 
    \begin{equation}\label{eq: path in support}
        \{ p_1,p_2,\ldots, p_m \} \subseteq \support \omega^{(0)} \cup \support \omega^{(1)}.
    \end{equation}
    To see that this holds, fix some \( k \in \{ 1,2, \ldots, m \} \). Define
    \begin{equation*}
        J_k \coloneqq \bigl\{ j \in \{ 1,2, \ldots, \ell \} \colon e_j \in \pm \partial p_k  \bigr\}.
    \end{equation*}
    By the choice of the edges \( e_1,e_2,\ldots,e_\ell \), the set \( J_k \) is non-empty. 
    Assume first that \( |J_k| \geq 2 \), and let \( j,j' \in J_k \) be such that \( j<j' \). 
    By the choice of \( e_j \), we cannot have \( p_{k_j} \sim p_{k_{j'}} \). Since \( (p_{k_j},p_k,p_{k_{j'}}) \) is a path in \( \mathcal{G}(\omega^{(0)},\omega^{(1)})\) and \( e_j \in \pm \partial p \), we must have \( |j-j'|=1 \). Since \( j' > j \), we have \( j' = j+ 1 \). 
    To sum up, we have either \( J_k = \{ j \} \) for some \( j \in \{ 1,2, \ldots, \ell \} \), or \( J_k = \{ j,j+1 \} \) for some \( j \in \{ 1,2, \ldots, \ell-1 \} \).
    Consequently, we have
    \begin{equation*}
        p_k \in \begin{cases}
            \omega^{(0)} &\text{if } j \text{ is even} \cr
            \omega^{(1)} &\text{if } j \text{ is odd,}
        \end{cases}
    \end{equation*}
    and thus~\ref{item: confix with path i} holds.
    To see that~\ref{item: confix with path ii} holds, note simply that 
    \begin{equation*}
        |\support \omega^{(0)}| + |\support \omega^{(1)}| \leq 6\bigl(|\support \sigma^{(0)}| + |\support \sigma^{(1)}|\bigr) = 12\ell \leq 12|\mathcal{P}|.
    \end{equation*} 
    This concludes the proof.
\end{proof}

\begin{lemma}\label{lemma: upper path bound}
    Let \( B \) and \( B' \) be two boxes in \( \mathbb{Z}^4 \),  let \( P_1,P_2 \subseteq P_B \cup P_B \) be disjoint, and assume that \( \omega \in \Sigma_{P_B} \) and \( \omega' \in \Sigma_{P_{B'}} \) are such that that \( P_1 \leftrightarrow P_2 \) in \( \mathcal{G}(\omega ,\omega') \). Further, let \( \mathcal{P} \coloneqq (p_1,p_2, \ldots, p_\ell)  \) be a geodesic in \( \mathcal{G}(\omega,\omega') \) between \( p_1 \in P_1 \) and \( p_\ell \in P_2 \). Then
    \begin{equation*}
        |\mathcal{P}|  \leq  \bigl( |\support \omega  | + | \support \omega'|\bigr)/2 .
    \end{equation*}
\end{lemma}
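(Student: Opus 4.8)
The plan is to exhibit a collection of plaquettes, each of which lies in $\support\omega\cup\support\omega'$, whose cardinality is at least $|\mathcal{P}|=\ell$, and then observe that a counting argument relating this set back to the two supports yields the factor $1/2$. The geodesic $\mathcal{P}=(p_1,\dots,p_\ell)$ consists of $\ell$ distinct plaquettes, all of which are vertices of $\mathcal{G}(\omega,\omega')$ and hence all lie in $\support\omega\cup\support\omega'$. So immediately $\ell\le|\support\omega\cup\support\omega'|$. The point is to improve this to $\ell\le(|\support\omega|+|\support\omega'|)/2$, which is a stronger statement precisely because a plaquette in one support need not be in the other, so $|\support\omega\cup\support\omega'|$ can be as large as $|\support\omega|+|\support\omega'|$, not just their average.

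The key observation I would use is that each support is \emph{symmetric}: since $\omega\in\Sigma_{P_B}$ and $\omega'\in\Sigma_{P_{B'}}$, if $p\in\support\omega$ then $-p\in\support\omega$ as well, and similarly for $\omega'$. Thus $|\support\omega|=2|(\support\omega)^+|$ and $|\support\omega'|=2|(\support\omega')^+|$. Now, for each plaquette $p_k$ on the geodesic, exactly one of $\pm p_k$ is positively oriented; collecting these, I get a set $Q$ of positively oriented plaquettes with $|Q|=\ell$, and each element of $Q$ lies in $(\support\omega)^+\cup(\support\omega')^+$ (because $p_k$ or $-p_k$ lies in $\support\omega\cup\support\omega'$, and symmetry of each support means whichever sign is positive also lies there). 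Hence
\begin{equation*}
    \ell = |Q| \le |(\support\omega)^+\cup(\support\omega')^+| \le |(\support\omega)^+| + |(\support\omega')^+| = \tfrac{1}{2}\bigl(|\support\omega|+|\support\omega'|\bigr),
\end{equation*}
which is exactly the claimed bound.

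I expect the only subtlety — and it is minor — to be the bookkeeping in the first inequality chain: one must be careful that the map $p_k\mapsto$ (the positively oriented representative of $\{p_k,-p_k\}$) is injective on $\image(\mathcal{P})$. This holds because the $p_k$ are distinct plaquettes along a \emph{path} in $\mathcal{G}$, and a path by Definition~\ref{def: path} visits distinct plaquettes; moreover no two of them can be $\pm$ of each other, since $p$ and $-p$ are never both allowed to appear (they are the same vertex of the graph, or at any rate a path is required to consist of distinct plaquettes and $p\not\sim -p$ in the relevant sense — in fact one should note $p_k\ne -p_j$ for $k\ne j$ because otherwise the path would not be a geodesic, as $p$ and $-p$ have identical coboundaries up to sign and could be short-circuited). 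Once injectivity is in hand, the argument is a two-line counting estimate using symmetry of the supports, and the hypothesis that $P_1\leftrightarrow P_2$ and that $\mathcal{P}$ is a geodesic is used only to guarantee that $\mathcal{P}$ exists and that its plaquettes are genuinely distinct and lie in the relevant supports.
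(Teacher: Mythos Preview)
Your proposal is correct and follows essentially the same approach as the paper. The paper's proof observes that a geodesic satisfies \( \image(\mathcal{P}) \cap (-\image(\mathcal{P})) = \emptyset \) and \( \pm\image(\mathcal{P}) \subseteq \support\omega \cup \support\omega' \), yielding \( 2|\mathcal{P}| \le |\support\omega| + |\support\omega'| \); your passage to positive representatives is the same argument up to cosmetic repackaging, and your discussion of the ``subtlety'' (that no \( p_k \) equals \( -p_j \) on a geodesic) is exactly what the paper records as \( \image(\mathcal{P}) \cap (-\image(\mathcal{P})) = \emptyset \).
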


\begin{proof}
    Since \( \mathcal{P} \) is a geodesic in \( \mathcal{G} \), we must have 
    \begin{equation*}
        \image (\mathcal{P}) \cap \bigl(-\image(\mathcal{P})\bigr) = \emptyset,
    \end{equation*}
    and
    \begin{equation*}
        \pm \image(\mathcal{P}) \subseteq \support \omega \cup \support \omega'.
    \end{equation*}  
    Consequently, we have 
    \begin{equation}\label{eq: first path inequality}
        2|\mathcal{P}| \leq |\support \omega \cup \support \omega'| \leq |\support \omega | + |\support \omega'|.
    \end{equation}
    From this the desired conclusion immediately follows.
\end{proof}

\begin{remark}
    If we either assume that \(  \partial \hat \partial p \cap \delta P_B = \emptyset  \) for all \( p \in \image P \) or consider lattice gauge theory with zero boundary conditions, then one can quite easily show that~\eqref{eq: first path inequality} can be replaced with 
    \begin{equation*}
        10|\mathcal{P}|/3 \leq  |\support \omega| + | \support \omega'|.
    \end{equation*}
    If we have zero boundary conditions, then this stronger inequality can be used to replace the constant \( C_2 \) defined in~\eqref{eq: C2} with the smaller constant \( 3^{3/5} \cdot 10 \).
\end{remark}

\begin{definition}
    A path \( \mathcal{P} = ( p_1, p_2 \ldots, p_m ) \) is said to be \emph{optimal} if 
\begin{enumerate}[label=(\roman*)]
    \item \( p_1,p_2, \ldots,  p_{m-1} \) all have the same orientation, and
    \item for any \( i,j \in \{ 1,2, \ldots, m \} \), if \( \hat \partial p_i \cap (\hat \partial p_j \cup \hat \partial (-p_j)) \neq \emptyset \), then \( |i-j| \leq 1 \).
\end{enumerate} 
\end{definition}

The next lemma gives an upper bound on the number of optimal paths of a given length.

\begin{lemma}\label{lemma: number of optimal paths}
    Let \( B \) be a box in \( \mathbb{Z}^4 \), let \(  p_1 \in P_B \), and let \( m \geq 2 \). Then there are at most \( 40 \cdot 15^{m-2}  \) optimal paths \( \mathcal{P} = ( p_1,p_2,\ldots, p_m ) \) with  \( p_m \neq \pm  p_1 \).
\end{lemma}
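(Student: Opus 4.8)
The plan is to bound the number of optimal paths by estimating, one plaquette at a time, how many ways there are to extend the path.

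First I would record two elementary facts about the cubical structure of \( \mathbb{Z}^4 \): every plaquette lies in exactly four unoriented \(3\)-cells, and every \(3\)-cell has exactly six plaquette faces. From these it follows that, given a plaquette \( p \), the number of \emph{unoriented} plaquettes other than the one underlying \( p \) that lie in a common \(3\)-cell with \( p \) is at most \( 4\cdot 5 = 20 \), and hence the number of \emph{oriented} plaquettes \( p' \) with \( p' \neq \pm p \) and \( \hat\partial p \cap (\hat\partial p' \cup \hat\partial(-p')) \neq \emptyset \) is at most \( 40 \). Moreover, if in addition \( p \) and \( p' \) already lie in a prescribed common unoriented \(3\)-cell \( c \), then any plaquette lying in a common \(3\)-cell with \( p \) but \emph{not} a face of \( c \) must be a face of one of the three \(3\)-cells incident to \( p \) other than \( c \); thus there are at most \( 3\cdot 5 = 15 \) such unoriented plaquettes, and at most \( 30 \) oriented ones.

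With these counts in hand, I would count the optimal paths \( \mathcal{P} = (p_1,\ldots,p_m) \) with \( p_1 \) fixed and \( p_m \neq \pm p_1 \) by choosing \( p_2, p_3, \ldots, p_m \) in turn. For \( m = 2 \): \( p_2 \) is any oriented plaquette with \( \hat\partial p_1 \cap (\hat\partial p_2 \cup \hat\partial(-p_2)) \neq \emptyset \) and \( p_2 \neq \pm p_1 \), so there are at most \( 40 = 40\cdot 15^{0} \) such paths. For \( m \geq 3 \): since \( p_1,\ldots,p_{m-1} \) must all carry the orientation of the fixed plaquette \( p_1 \), the orientation of \( p_2 \) is forced, so there are at most \( 20 \) choices for \( p_2 \). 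For \( 2 \leq k \leq m-1 \) and given \( p_1,\ldots,p_k \): the plaquettes \( p_{k-1} \) and \( p_k \) are consecutive in the path, hence lie in a common unoriented \(3\)-cell \( c_k \); if \( p_{k+1} \) were also a face of \( c_k \), then \( p_{k-1} \) and \( p_{k+1} \) would lie in a common \(3\)-cell (one picks the orientation of \( c_k \) in \( \hat\partial p_{k-1} \)), contradicting optimality condition (ii) because \( |(k-1)-(k+1)| = 2 \). Thus \( p_{k+1} \) is a face of one of the three \(3\)-cells incident to \( p_k \) other than \( c_k \); when \( k+1 \leq m-1 \) its orientation is forced, giving at most \( 15 \) choices, and when \( k+1 = m \) there are at most \( 30 \) choices. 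Multiplying the bounds over the \( m-3 \) intermediate plaquettes \( p_3,\ldots,p_{m-1} \) together with the bounds for \( p_2 \) and \( p_m \), the number of optimal paths is at most \( 20 \cdot 15^{m-3} \cdot 30 = 40 \cdot 15^{m-2} \); the case \( m = 3 \) is the instance with no intermediate factor.

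The only non-routine part is the bookkeeping in the inductive step: one must check that \( c_k \) is genuinely among the four \(3\)-cells incident to \( p_k \) (so that deleting it leaves three), that each \(3\)-cell contributes at most five faces besides \( p_k \), and that the orientation of \( p_{k+1} \) is forced precisely when \( k+1 \leq m-1 \) but free when \( k+1 = m \) — this asymmetry is exactly what produces the leading factor \( 40 \) against the multiplicative factor \( 15 \). The underlying geometric inputs (four \(3\)-cells per plaquette, six faces per \(3\)-cell) are standard facts about the cell complex of \( \mathbb{Z}^4 \) and require only a direct check.
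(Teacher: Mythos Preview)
Your proposal is correct and follows essentially the same approach as the paper: both arguments count step by step, using that a plaquette in \( \mathbb{Z}^4 \) lies in four \(3\)-cells with five other faces each (giving \( 40 \) oriented neighbors), and that optimality condition~(ii) forbids \( p_{k+1} \) from lying in the \(3\)-cell shared by \( p_{k-1} \) and \( p_k \), leaving \( 3\cdot 5 \) unoriented candidates, halved by the orientation constraint except at the final step. The paper's phrasing differs only cosmetically (it writes \( p_j \notin \pm\partial\hat\partial p_{j-2} \) rather than naming the shared cell \( c_k \)), but the combinatorics and the resulting product \( 20\cdot 15^{m-3}\cdot 30 \) are identical.
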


\begin{proof} 
    Note first that for any \( p \in P_B \), we have \( |\hat \partial p| = 4 \), and if \( c \in \hat \partial p \), then \( |(\partial c)\smallsetminus \{ p \}| = 5 \). 
    
    Since \( \mathcal{P} \) is a path, we have 
    \begin{equation*}
        p_2 \in  \pm \bigl(\partial \hat \partial p_1 \smallsetminus \{ p_1 \} \bigr).
    \end{equation*}
    Since
    \begin{equation*}
        \Bigl|\pm \bigl(\partial \hat \partial p_1 \smallsetminus \{ p_1 \} \bigr) \Bigr| = 2 \cdot 4 \cdot 5 = 40,
    \end{equation*}
    the desired conclusion holds in the case \( m = 2 \). 
    On the other hand, for each plaquette \( p \in \pm \bigl(\partial \hat \partial p_1 \smallsetminus \{ p_1 \} \bigr) \), we also have \( -p \in \pm \bigl(\partial \hat \partial p_1 \smallsetminus \{ p_1 \} \bigr) \), and exactly one of \( p \) and \( -p \) has the same orientation as \( p_1 \). Since \( \mathcal{P} \) is optimal, if \( m \geq 3\) then the plaquettes \( p_1 \) and \( p_2 \) must have the same orientation, and hence in this case, given \( p_1 \) there are at most \( 40/2 = 20 \) possible choices of \( p_2 \).
    
    Next, assume that \( m \geq 4 \) and that \( j \in \{3, \ldots, m-1 \} \). Since \( \mathcal{P} \) is a path, we have
    \begin{equation*}
        p_{j} \in \pm \bigl( \partial \hat \partial p_{j-1} \smallsetminus \{ p_{j-1} \} \bigr),
    \end{equation*}
    and since \( \mathcal{P} \) is optimal, we have
    \begin{equation*}
        p_{j} \not \in \pm \partial \hat \partial p_{j-2}.
    \end{equation*}
    Since \( \mathcal{P} \) is a path, we have \( |\hat \partial p_{j-1} \cap \pm \hat \partial p_{j-2} | \geq 1 \). Since \( j<m \), the plaquettes \( p_j \) and \( p_{j-1} \) must have the same orientation, and hence, given \( p_1,p_2, \ldots, p_{j-1} \), there are at most \( 2 \cdot (4-1)\cdot 5 \cdot \frac12= 15 \) possible choices of \( p_j \). By the same argument, since the last plaquette can have any orientation, it follows that given \( p_1, p_2, \ldots,  p_{m-1} \),  there can be at most \( 2 \cdot (4-1)\cdot 5 = 30 \) possible choices of  \( p_m \). 
    
    To sum up, we have showed that there are exactly \( 40\)  optimal paths \( \mathcal{P} = (p_1,p_2) \) with \( p_2 \neq \pm p_1 \), and if \( m \geq 3 \), there are at most \( 20 \cdot 15^{m-3} \cdot 30\)  optimal paths \( \mathcal{P} = (p_1,p_2, \ldots, p_m) \) with \( p_m \neq \pm p_1 \). This concludes the proof.
\end{proof}

\begin{lemma}\label{lemma: optimal geodesic}
    Let \( B \) and \( B' \) be two boxes in \( \mathbb{Z}^4 \), and let \( p_1,p_2 \in P_B \) be disjoint. Further, let \( \omega  \in \Sigma_{P_B} \) and \( \omega' \in \Sigma_{P_{B'}} \) and assume that   \( \{ p_1 \} \leftrightarrow  \{ p_2\} \) in \( \mathcal{G}(\omega,\omega') \). Then there is an optimal geodesic \( \mathcal{P} \)  from \( p_1 \) to \( p_2 \) in \( \mathcal{G}(\omega,\omega') \).
\end{lemma}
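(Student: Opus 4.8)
The plan is to take an arbitrary geodesic between $p_1$ and $p_2$ in $\mathcal{G}(\omega,\omega')$ and reverse the orientations of some of its plaquettes so that it becomes optimal, exploiting the fact that adjacency in $\mathcal{G}(\omega,\omega')$ is insensitive to reversing a plaquette.

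First I would fix a geodesic $\mathcal{Q} = (q_1,\dots,q_m)$ from $q_1 = p_1$ to $q_m = p_2$ in $\mathcal{G}(\omega,\omega')$; one exists because, by hypothesis, $p_1$ and $p_2$ lie in the same connected component of this graph. As in the proof of Lemma~\ref{lemma: upper path bound}, one has $\image(\mathcal{Q}) \cap (-\image(\mathcal{Q})) = \emptyset$. Next I would check that \emph{every} geodesic automatically satisfies condition~(ii) of optimality: if $i < j$ and $\hat\partial q_i \cap (\hat\partial q_j \cup \hat\partial(-q_j)) \neq \emptyset$, then — since $q_i \neq q_j$ (the $q_k$ are distinct) and $q_i \neq -q_j$ (by the previous sentence) — this is precisely the assertion that $q_i \sim q_j$. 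Hence, whenever $j \geq i+2$, the sequence $(q_1,\dots,q_i,q_j,q_{j+1},\dots,q_m)$ would be a strictly shorter path between $q_1$ and $q_m$ (its vertices form a subset of the distinct vertices $q_1,\dots,q_m$, and its consecutive adjacencies are inherited from $\mathcal{Q}$ together with $q_i\sim q_j$), contradicting that $\mathcal{Q}$ is a geodesic; so $|i-j| \leq 1$.

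It remains to arrange condition~(i). The key observation I would record is that for plaquettes $p,q$ one has $p \sim q$ in $\mathcal{G}(\omega,\omega')$ if and only if $(\hat\partial p \cup \hat\partial(-p)) \cap (\hat\partial q \cup \hat\partial(-q)) \neq \emptyset$, which is manifestly unchanged when $p$ is replaced by $-p$ or $q$ by $-q$; moreover $\support\omega$ and $\support\omega'$ are symmetric, so $-q_k$ is a vertex of $\mathcal{G}(\omega,\omega')$ whenever $q_k$ is. Using this, I would define $\mathcal{P} = (p_1',\dots,p_m')$ by $p_1' := p_1$, $p_m' := p_2$, and, for $2 \leq k \leq m-1$, letting $p_k'$ be whichever of $q_k,-q_k$ has the same orientation as $p_1$. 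Then $p_1',\dots,p_{m-1}'$ share a common orientation, so~(i) holds; by the reversal-invariance of $\sim$ we have $p_k' \sim p_{k+1}'$ for every $k$; and the $p_k'$ are pairwise distinct, since $p_i' = p_j'$ with $i \neq j$ would force $q_i \in \{q_j,-q_j\}$, which is ruled out by distinctness of the $q_k$ together with $\image(\mathcal{Q}) \cap (-\image(\mathcal{Q})) = \emptyset$. Thus $\mathcal{P}$ is a path from $p_1$ to $p_2$ with the same number of vertices as $\mathcal{Q}$, hence a geodesic, and therefore it also satisfies~(ii) by the previous paragraph; so $\mathcal{P}$ is the desired optimal geodesic. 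The whole argument is bookkeeping, and the only point requiring care is the reversal-invariance of adjacency and the ensuing check that the orientation-fixing step preserves both the path structure and the distinctness of the vertices — both of which rely on the symmetry of the supports and on the $\pm$ in the definition of $\mathcal{G}(\omega,\omega')$.
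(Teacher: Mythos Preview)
Your proof is correct and follows essentially the same approach as the paper: take any geodesic and flip the orientations of the intermediate plaquettes so that condition~(i) holds, using the symmetry of the supports and the $\pm$ in the adjacency relation. Your version is in fact more thorough than the paper's, which simply asserts that the sign-flipped sequence is again a geodesic and that signs can be chosen to make it optimal, whereas you explicitly verify both that any geodesic automatically satisfies condition~(ii) and that the orientation-fixing preserves distinctness and adjacency.
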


\begin{proof}
    Since \( \{ p_1 \} \leftrightarrow  \{ p_2\} \) in \( \mathcal{G}(\omega ,\omega') \), there exist at least one geodesic from \( p_1 \) to \( p_2 \) in \( \mathcal{G}(\omega,\omega') \). Let \( (p^{(1)},p^{(2)},\ldots, p^{(m)}) \) be such a geodesic.
    Then, for any \( \tau_2,\ldots, \tau_{m-1} \in \{ -1,1 \} \), the path 
    \begin{equation*}
        \mathcal{P} \coloneqq \bigl( p^{(1)}, \tau_2 p^{(2)}, \ldots, \tau_{m-1}p^{(m-1)},p^{(m)} \bigr)
    \end{equation*}
    is also a geodesic from \( p_1 \) to \( p_2 \) in \( \mathcal{G}(\omega,\omega') \). In particular, we can choose \( \tau_2,\ldots, \tau_{m-1} \in \{ -1,1 \} \) such that \( \mathcal{P} \) is an optimal geodesic from \( p_1 \) to \( p_2 \) in \( \mathcal{G}(\omega,\omega') \). This concludes the proof.
\end{proof}

\section{Connected sets of plaquettes}\label{sec: connected sets}

\begin{definition}\label{definition: connected sets}
    Let \( B \) and \( B' \) be two boxes in  in \( \mathbb{Z}^4 \), and let \( \omega  \in \Sigma_{P_B} \) and \( \omega'\in \Sigma_{P_{B'}} \).
    Let \( P_1,P_2 \subseteq P_B \cup P_{B'} \) be two disjoint sets. If there is \( p_1 \in P_1 \) and \( p_2 \in P_2 \) and a path from \( p_1 \) to \( p_2 \) in \( \mathcal{G}(\omega,\omega') \), then we say that \( P_1 \) and \( P_2 \) are \emph{connected} in \( \mathcal{G}( \omega, \omega') \) and write  \( P_1  \leftrightarrow P_2 \) (in \( \mathcal{G}(\omega,\omega') \)). Otherwise, we say that \( P_1 \) and \( P_2 \) are \emph{disconnected}, and write \( P_1 \nleftrightarrow P_2 \) (in \( \mathcal{G}(\omega,\omega') \)). 
\end{definition}

Note that with this definition, if \( p_1,p_2 \in P_B \cup P_{B'} \), \( \omega  \in \Sigma_{P_B} \) and \( \omega' \in \Sigma_{P_{B'}} \), then there is a path from \( p_1 \) to \( p_2 \) in \( \mathcal{G} (\omega,\omega') \) if and only if \( \{ p_1\} \leftrightarrow \{ p_2\} \) in \( \mathcal{G}(\omega,\omega') \).
Note also that if \( P_1,P_2 \subseteq P_B \cup P_{B'} \), \( \omega \in \Sigma_{P_B} \) and \( \omega' \in \Sigma_{P_{B'}} \) then, by definition, \( P_1 \leftrightarrow P_2 \) in \( \mathcal{G}(\omega,\omega') \) if and only if  \( P_2 \leftrightarrow P_1 \) in \( \mathcal{G}(\omega,\omega') \). Moreover, \( P_1 \leftrightarrow P_2 \) in \( \mathcal{G}(\omega,\omega') \) if and only if \( \delta P_1 \leftrightarrow \delta P_2 \) in \( \mathcal{G}(\omega,\omega') \).

Re remark that if \( \omega \in \Sigma_{P_B} \), then the connected components of \( \mathcal{G}(\omega,0) \) are called \emph{vortices} in~\cite{c2018b} and~\cite{cao2020}, and correspond to the support of \emph{polymers} in~\cite{b1984}.

The main reason for introducing a notion of sets of plaquettes being connected is the following lemma.

\begin{lemma}\label{lemma: disconnected expectations}
    Let \( B \) be a box in \( \mathbb{Z}^4 \), and let \( \beta \geq 0 \). Further,  let  \( P_1,P_2 \subseteq P_{B}\) be symmetric and disjoint, and let \( f \colon \Sigma_{P_B} \to \mathbb{C} \) and \( g \colon \Sigma_{P_B} \to \mathbb{C} \) be such that \( f(\omega) = f(\omega|_{P_1}) \) and \( g(\omega) = g(\omega|_{P_2}) \) for all \( \omega \in \Sigma_{P_B} \)=. Finally,  let \( \omega^{(0)}, \omega^{(1)} \sim \mu_{B,\beta} \)   be independent. Then
    \begin{equation} \label{eq: connected set consequence}
        \begin{split}
            &\mathbb{E}_{B,\beta}  \times \mathbb{E}_{B,\beta} \Bigl[ f\bigl(\omega^{(0)}|_{P_1}\bigr)g \bigl(\omega^{(0)}|_{P_2}\bigr) \cdot \mathbb{1}\bigl[ P_1 \nleftrightarrow P_2 \text{ in } \mathcal{G}(\omega^{(0)},\omega^{(1)}) \bigr]\Bigr]
            \\&\qquad =
            \mathbb{E}_{B,\beta} \times \mathbb{E}_{B,\beta} \Bigl[ f\bigl(\omega^{(0)}|_{P_1}\bigr)g \bigl(\omega^{(1)}|_{P_2}\bigr) \cdot \mathbb{1}\bigl[ P_1 \nleftrightarrow P_2 \text{ in } \mathcal{G}(\omega^{(0)},\omega^{(1)})\bigr]\Bigr].
        \end{split}
    \end{equation} 
\end{lemma}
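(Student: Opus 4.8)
The plan is to prove~\eqref{eq: connected set consequence} by exhibiting a weight-preserving involution of the event
\[
    E \coloneqq \bigl\{ (\omega^{(0)},\omega^{(1)}) \in \Sigma_{P_B}\times\Sigma_{P_B} \colon P_1 \nleftrightarrow P_2 \text{ in } \mathcal{G}(\omega^{(0)},\omega^{(1)}) \bigr\}
\]
which swaps ``the part of the configuration attached to \( P_2 \)'' between \( \omega^{(0)} \) and \( \omega^{(1)} \). For an arbitrary pair \( (\omega^{(0)},\omega^{(1)}) \in \Sigma_{P_B}\times\Sigma_{P_B} \), let \( \mathcal{C}=\mathcal{C}(\omega^{(0)},\omega^{(1)}) \) denote the union of all connected components of \( \mathcal{G}(\omega^{(0)},\omega^{(1)}) \) that contain a plaquette of \( P_2 \). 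Each connected component is symmetric (if \( p \) is a vertex so is \( -p \), and \( p\sim -p \) by the second alternative in Definition~\ref{definition: graph definition}), so \( \mathcal{C} \) is a symmetric subset of \( \support\omega^{(0)}\cup\support\omega^{(1)} \). Define \( T(\omega^{(0)},\omega^{(1)})\coloneqq(\tilde\omega^{(0)},\tilde\omega^{(1)}) \) by
\[
    \tilde\omega^{(0)}\coloneqq \bigl(\omega^{(0)}-\omega^{(0)}|_{\mathcal{C}}\bigr)+\omega^{(1)}|_{\mathcal{C}},
    \qquad
    \tilde\omega^{(1)}\coloneqq \bigl(\omega^{(1)}-\omega^{(1)}|_{\mathcal{C}}\bigr)+\omega^{(0)}|_{\mathcal{C}} .
\]

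The first and central step is the geometric claim that for every symmetric union \( P \) of connected components of \( \mathcal{G}(\omega^{(0)},\omega^{(1)}) \), both \( \omega^{(0)}|_{P} \) and \( \omega^{(1)}|_{P} \) lie in \( \Sigma_{P_B} \). To see this, fix an oriented \( 3 \)-cell \( c \); any two distinct plaquettes of \( \partial c \) have \( c \) in their common co-boundary and are therefore adjacent in \( \mathcal{G}(\omega^{(0)},\omega^{(1)}) \) whenever both are vertices, so \( V\coloneqq \partial c\cap(\support\omega^{(0)}\cup\support\omega^{(1)}) \) is contained in a single component. Since \( P \) is a union of components, either \( V\subseteq P \) or \( V\cap P=\emptyset \); in the first case \( \sum_{p\in\partial c}(\omega^{(i)}|_P)_p=\sum_{p\in\partial c}\omega^{(i)}_p=0 \) by Lemma~\ref{lemma: Bianchi}, and in the second \( \sum_{p\in\partial c}(\omega^{(i)}|_P)_p=0 \) because any \( p \) there contributing a nonzero term would lie in \( V\cap P \). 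Hence \( \omega^{(i)}|_P \) is closed, and it has support in \( P_B \), so it lies in \( \Sigma_{P_B} \). Applying this with \( P=\mathcal{C} \) and with \( P=(\support\omega^{(0)}\cup\support\omega^{(1)})\setminus\mathcal{C} \) shows that \( \omega^{(i)}|_{\mathcal{C}} \) and \( \omega^{(i)}-\omega^{(i)}|_{\mathcal{C}} \) both lie in \( \Sigma_{P_B} \), so \( \tilde\omega^{(i)} \), being a sum of two closed \( 2 \)-forms supported in \( P_B \), lies in \( \Sigma_{P_B} \); in particular \( T \) is well defined.

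Next I would record three properties of \( T \). \emph{(i) \( T \) preserves \( \mathcal{G} \), \( E \) and \( \mathcal{C} \), and is an involution.} Since \( \omega^{(0)}|_{\mathcal{C}} \) and \( \omega^{(1)}-\omega^{(1)}|_{\mathcal{C}} \) have disjoint supports, \( \support\tilde\omega^{(0)}=(\support\omega^{(0)}\setminus\mathcal{C})\sqcup(\support\omega^{(1)}\cap\mathcal{C}) \), and symmetrically for \( \tilde\omega^{(1)} \), so \( \support\tilde\omega^{(0)}\cup\support\tilde\omega^{(1)}=\support\omega^{(0)}\cup\support\omega^{(1)} \); as the adjacency relation of Definition~\ref{definition: graph definition} depends only on the vertex set, \( \mathcal{G}(\tilde\omega^{(0)},\tilde\omega^{(1)})=\mathcal{G}(\omega^{(0)},\omega^{(1)}) \). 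Hence \( T \) maps \( E \) onto \( E \), fixes \( \mathcal{C} \), and a direct computation (using \( \tilde\omega^{(0)}|_{\mathcal{C}}=\omega^{(1)}|_{\mathcal{C}} \) and \( \tilde\omega^{(1)}|_{\mathcal{C}}=\omega^{(0)}|_{\mathcal{C}} \)) gives \( T\circ T=\mathrm{id} \). \emph{(ii) \( T \) preserves \( \mu_{B,\beta}\times\mu_{B,\beta} \).} By the factorization property (Lemma~\ref{lemma: factorization}), \( \phi_\beta(\omega^{(i)})=\phi_\beta(\omega^{(i)}|_{\mathcal{C}})\,\phi_\beta(\omega^{(i)}-\omega^{(i)}|_{\mathcal{C}}) \) for \( i\in\{0,1\} \), and the swap only exchanges the factors \( \phi_\beta(\omega^{(0)}|_{\mathcal{C}}) \) and \( \phi_\beta(\omega^{(1)}|_{\mathcal{C}}) \), so \( \phi_\beta(\tilde\omega^{(0)})\phi_\beta(\tilde\omega^{(1)})=\phi_\beta(\omega^{(0)})\phi_\beta(\omega^{(1)}) \); by~\eqref{eq: mu using phi} this is exactly invariance of the product measure. \emph{(iii) \( T \) fixes the integrand on \( E \).} On \( E \) no component of \( \mathcal{G}(\omega^{(0)},\omega^{(1)}) \) meets both \( P_1 \) and \( P_2 \), hence \( \mathcal{C}\cap P_1=\emptyset \) while \( P_2\cap(\support\omega^{(0)}\cup\support\omega^{(1)})\subseteq\mathcal{C} \); since \( \tilde\omega^{(0)} \) agrees with \( \omega^{(0)} \) off \( \mathcal{C} \) we get \( \tilde\omega^{(0)}|_{P_1}=\omega^{(0)}|_{P_1} \), and since \( \tilde\omega^{(1)} \) agrees with \( \omega^{(0)} \) on \( \mathcal{C} \) while both \( \tilde\omega^{(1)} \) and \( \omega^{(0)} \) vanish on \( P_2\setminus\mathcal{C} \), we get \( \tilde\omega^{(1)}|_{P_2}=\omega^{(0)}|_{P_2} \). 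Therefore \( f(\tilde\omega^{(0)}|_{P_1})=f(\omega^{(0)}|_{P_1}) \) and \( g(\tilde\omega^{(1)}|_{P_2})=g(\omega^{(0)}|_{P_2}) \).

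With (i)--(iii) in hand the identity follows by reindexing. The right-hand side of~\eqref{eq: connected set consequence} equals \( \sum_{(\eta^{(0)},\eta^{(1)})\in E}\mu_{B,\beta}(\{\eta^{(0)}\})\,\mu_{B,\beta}(\{\eta^{(1)}\})\, f(\eta^{(0)}|_{P_1})\, g(\eta^{(1)}|_{P_2}) \). Substituting \( (\eta^{(0)},\eta^{(1)})=T(\omega^{(0)},\omega^{(1)}) \), which by (i)--(ii) is a weight-preserving bijection of \( E \) onto itself, and applying (iii) to each term, transforms this sum into \( \sum_{(\omega^{(0)},\omega^{(1)})\in E}\mu_{B,\beta}(\{\omega^{(0)}\})\,\mu_{B,\beta}(\{\omega^{(1)}\})\, f(\omega^{(0)}|_{P_1})\, g(\omega^{(0)}|_{P_2}) \), which is the left-hand side of~\eqref{eq: connected set consequence}. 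I expect the main difficulty to be the geometric claim of the second paragraph --- that restricting a closed \( 2 \)-form to a union of connected components of \( \mathcal{G} \) again gives an element of \( \Sigma_{P_B} \), which is precisely where the structure of \( 3 \)-cells in \( \mathbb{Z}^4 \) and the Bianchi identity (Lemma~\ref{lemma: Bianchi}) enter --- together with the verification in (i) that the swap leaves \( \mathcal{G} \), and hence \( \mathcal{C} \), unchanged, which is what makes \( T \) an involution on \( E \).
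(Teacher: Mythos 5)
Your proof is correct, and it takes a genuinely different route from the paper's. The paper explores the (thickened) cluster of \( P_1 \) in \( \mathcal{G}(\omega^{(0)},\omega^{(1)}) \), observes that this cluster is determined without examining the configurations outside it, and then argues that, conditionally on the cluster data and on the event that \( P_2 \) is not touched, the two independent configurations are exchangeable outside the cluster, so the conditional expectations of \( g(\omega^{(0)}|_{P_2}) \) and \( g(\omega^{(1)}|_{P_2}) \) coincide. You instead work with the cluster \( \mathcal{C} \) of \( P_2 \) and replace the conditioning step by an explicit weight-preserving involution \( T \) that swaps \( \omega^{(0)}|_{\mathcal{C}} \) and \( \omega^{(1)}|_{\mathcal{C}} \), verifying measure preservation via the factorization property (Lemma~\ref{lemma: factorization}) and~\eqref{eq: mu using phi} and then reindexing the sum. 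The two arguments rest on the same underlying fact --- that the restriction of an element of \( \Sigma_{P_B} \) to a union of connected components of \( \mathcal{G}(\omega^{(0)},\omega^{(1)}) \) is again closed, so the Gibbs weight factorizes over clusters --- but your version makes this fact explicit and proves it (via the Bianchi identity and the observation that all supported plaquettes in the boundary of a single \( 3 \)-cell are pairwise adjacent), whereas the paper merely asserts \( d(\omega^{(i)}|_{\hat P_1})=0 \) and phrases the exchangeability informally. What your approach buys is a fully combinatorial, self-contained verification; what the paper's buys is brevity and a formulation (conditioning on an explored cluster) that it reuses almost verbatim in the coupling construction of Lemma~\ref{lemma: total variation bound}.
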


\begin{proof} 
    Given \( \omega^{(0)}, \omega^{(1)}\in \Sigma_{P_B} \), let 
    \begin{equation*} 
        {\hat P}_1 
        \coloneqq
        {\hat P}_1  (\omega,\omega')
        \coloneqq
        \bigl\{ p \in P_B \colon \partial ( \hat \partial p) \leftrightarrow P_1 \text{ in } \mathcal{G}( \omega, \omega')\bigr\}.
    \end{equation*} 
    Then \( \hat P_1 \) is the set of all plaquettes in \( P_B  \) which shares a 3-cell with some plaquette in \( P_B  \) which is connected to \( P_1 \). 
    Note that the set \(  {\hat {P_1}} \) can be determined without looking outside \(  {\hat P}_1 \) in \( \omega^{(0)} \) and \( \omega^{(1)} \), and that 
    \begin{equation*}
        d\bigl(\omega^{(0)}|_{\hat P_1} \bigr) = d\bigl(\omega^{(0)}|_{\hat P_1  } \bigr) = 0.
    \end{equation*}
    This implies that even if we know \( \omega^{(0)}|_{\hat P_1} \) and \( \omega^{(1)}|_{\hat P_1} \), we cannot distinguish between  \( \omega^{(0)}|_{P_{B} \smallsetminus \hat P_1} \) and \( \omega^{(0)}|_{P_{B} \smallsetminus \hat P_1} \). Consequently, we have
    \begin{align*}
        &\mathbb{E}_{B,\beta} \times \mathbb{E}_{B,\beta}   \Bigl[ g \bigl(\omega^{(0)}|_{P_2}\bigr)  \mid   P_2 \cap \hat P_1 = \emptyset,\, \omega^{(0)}|_{\hat P_1},\, \omega^{(1)}|_{\hat P_1} \Bigr]
        \\&\qquad=
        \mathbb{E}_{B,\beta}  \times \mathbb{E}_{B,\beta}   \Bigl[ g \bigl(\omega^{(1)}|_{P_2}\bigr)  \mid  P_2 \cap \hat P_1= \emptyset ,\, \omega^{(0)}|_{\hat P_1},\, \omega^{(1)}|_{\hat P_1} \Bigr].
    \end{align*}  
    This implies in particular that
    \begin{align*}  
        &
        \mathbb{E}_{B,\beta} \times \mathbb{E}_{B,\beta}  \Bigl[   f\bigl(\omega^{(0)}|_{P_1}\bigr)g \bigl(\omega^{(0)}|_{P_2}\bigr) \cdot \mathbb{1}\bigl[ P_2 \cap \hat P_1 = \emptyset \bigr]  \Bigr]  
        \\&\qquad= 
        \mathbb{E}_{B,\beta} \times \mathbb{E}_{B,\beta}  \Bigl[   f\bigl(\omega^{(0)}|_{P_1}\bigr)g \bigl(\omega^{(1)}|_{P_2}\bigr) \cdot \mathbb{1}\bigl[ P_2 \cap \hat P_1 = \emptyset \bigr]  \Bigr].
    \end{align*}  
    Noting that 
    \begin{equation*}
        P_1 \nleftrightarrow P_2 \text{ in } \mathcal{G}(\omega^{(0)}, \omega^{(1)})\Leftrightarrow P_2 \cap \hat P_1 = \emptyset,
    \end{equation*} 
    the desired conclusion follows. 
\end{proof}

\section{Distances between plaquettes}\label{sec: distances}

Assume that two boxes \( B \) and \( B' \) in \( \mathbb{Z}^4 \) with \( B' \subseteq B \), and sets \( P_1,P_2 \subseteq P_B \cup P_{B'} \) are given. Recall the definition of \( \dist_{B,B'}(P_1,P_2 ) \) from~\eqref{eq: definition of distance}. Using the notation of the previous section, we  have
\begin{equation*}
    \begin{split}
        \dist_{B,B'}(P_1,P_2) = \frac{1}{2} \min \Bigl\{ |\support \omega| + |\support \omega'| \colon \omega \in \Sigma_{P_B},\, \omega' \in \Sigma_{P_{B'}} \text{ s.t. } P_1 \leftrightarrow P_2 \text{ in } \mathcal{G}(\omega,\omega') \Bigr\}.
    \end{split}
\end{equation*}
We now define another measure of the distance between sets of plaquettes which will be useful to us.
\begin{equation*} 
    \dist_B^*(P_1,P_2) \coloneqq \min_{p_1 \in P_1,\, p_2 \in P_2} \min \bigl\{ |\mathcal{P}| \colon \mathcal{P} \text{ is a path from $p_1$ to $p_2$ in $P_B$} \bigr\}.
\end{equation*}
We mention that we do neither claim nor prove that the functions \( \dist_{B,B'} \) and \( \dist_B^* \) defined above are distance functions, and this will not be needed in the rest of this paper.

The following lemma gives a relationship between the distances \( \dist_{B,B'} \) and \( \dist_B^* \).

\begin{lemma}\label{lemma: distance comparison}
    Let \( B \) and \( B' \) be two boxes in \( \mathbb{Z}^4 \) with \( B' \subseteq B \), and let   \( P_1 ,P_2 \subseteq P_B \) be disjoint. Then
    \begin{equation}\label{eq: distance comparison}
    \dist_B^*(P_1,P_2) \leq \dist_{B,B'}(P_1,P_2).
\end{equation} 
and
    \begin{equation}\label{eq: distance comparison ii}
    \dist_{B}(P_1,P_2) \leq 6\dist_B^*(P_1,P_2).
\end{equation} 
\end{lemma}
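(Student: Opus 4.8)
The plan is to prove the two inequalities separately, since they require opposite-direction constructions. For~\eqref{eq: distance comparison}, I would take a pair \( (\omega,\omega') \in \Sigma_{P_B} \times \Sigma_{P_{B'}} \) achieving the minimum in the displayed formula for \( \dist_{B,B'}(P_1,P_2) \), so that \( P_1 \leftrightarrow P_2 \) in \( \mathcal{G}(\omega,\omega') \) and \( \bigl(|\support \omega| + |\support \omega'|\bigr)/2 = \dist_{B,B'}(P_1,P_2) \). By Definition~\ref{definition: connected sets} there is a path \( \mathcal{P} = (p_1,\ldots,p_m) \) in \( \mathcal{G}(\omega,\omega') \) from some \( p_1 \in P_1 \) to some \( p_m \in P_2 \); we may take \( \mathcal{P} \) to be a geodesic in \( \mathcal{G}(\omega,\omega') \). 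Since consecutive plaquettes in \( \mathcal{P} \) satisfy \( \hat\partial p_k \cap \pm\hat\partial p_{k+1} \neq \emptyset \) (this is exactly the adjacency in \( \mathcal{G} \), hence the condition in Definition~\ref{def: path}), \( \mathcal{P} \) is a path from \( p_1 \) to \( p_m \) in \( P_B \), so \( \dist_B^*(P_1,P_2) \leq |\mathcal{P}| \). Then I would invoke Lemma~\ref{lemma: upper path bound} (applied with \( P_1,P_2 \) being the singletons, or directly) to get \( |\mathcal{P}| \leq \bigl(|\support\omega| + |\support\omega'|\bigr)/2 = \dist_{B,B'}(P_1,P_2) \), which chains to give~\eqref{eq: distance comparison}. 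A small point to check: \( \mathcal{P} \) must be chosen so that its image and negated image are disjoint and contained in \( \support\omega \cup \support\omega' \), which is automatic for a geodesic, exactly as in the proof of Lemma~\ref{lemma: upper path bound}.

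For~\eqref{eq: distance comparison ii}, I would argue in the other direction: start from a path \( \mathcal{P} = (p_1,\ldots,p_m) \) in \( P_B \) from some \( p_1 \in P_1 \) to some \( p_m \in P_2 \) with \( |\mathcal{P}| = \dist_B^*(P_1,P_2) \); without loss of generality it is a shortest such path. By Lemma~\ref{lemma: small configuration containing a path} there exist \( \omega^{(0)},\omega^{(1)} \in \Sigma_{P_B} \) such that \( \mathcal{P} \) is a path in \( \mathcal{G}(\omega^{(0)},\omega^{(1)}) \) and \( |\support\omega^{(0)}| + |\support\omega^{(1)}| \leq 12|\mathcal{P}| \). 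In particular \( p_1 \) and \( p_m \) lie in the same connected component of \( \mathcal{G}(\omega^{(0)},\omega^{(1)}) \), so by the definition~\eqref{eq: definition of distance between points} (with \( B' = B \), and both forms in \( \Sigma_{P_B} \)) we get
\begin{equation*}
    \dist_B(p_1,p_m) \leq \tfrac12\bigl(|\support\omega^{(0)}| + |\support\omega^{(1)}|\bigr) \leq 6|\mathcal{P}| = 6\dist_B^*(P_1,P_2),
\end{equation*}
and since \( \dist_B(P_1,P_2) \leq \dist_B(p_1,p_m) \) by~\eqref{eq: definition of distance}, inequality~\eqref{eq: distance comparison ii} follows.

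The main obstacle I anticipate is being careful about the two slightly different notions of adjacency that appear in the paper: the introduction defines the edge relation in \( \mathcal{G}(\omega,\omega') \) via plaquettes sharing a common 3-cell, whereas Definition~\ref{definition: graph definition} phrases it as \( \hat\partial p_1 \cap \pm\hat\partial p_2 \neq \emptyset \), and Definition~\ref{def: path} of a path in \( P_B \) uses \( \hat\partial p_k \cap \pm\hat\partial p_{k+1} \neq \emptyset \). I would need to confirm that these coincide (two plaquettes share a 3-cell iff their coboundaries intersect up to sign), which is immediate from the definition of \( \hat\partial \), so that a path in \( \mathcal{G}(\omega,\omega') \) really is a path in \( P_B \) and vice versa. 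The only other thing to be slightly attentive to is the orientation bookkeeping: in~\eqref{eq: distance comparison ii} the path \( \mathcal{P} \) from Lemma~\ref{lemma: small configuration containing a path} already passes through distinct plaquettes, and the bound \( |\support\omega^{(0)}| + |\support\omega^{(1)}| \leq 12|\mathcal{P}| \) counts oriented plaquettes, so the factor \( 6 = 12/2 \) in~\eqref{eq: distance comparison ii} is exactly the \( \tfrac12 \) in the definition of \( \dist_B \); no further factors are lost. Everything else is routine.
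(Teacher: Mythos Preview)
Your proposal is correct and follows essentially the same approach as the paper: both use Lemma~\ref{lemma: upper path bound} on a geodesic in \( \mathcal{G}(\omega,\omega') \) for~\eqref{eq: distance comparison}, and Lemma~\ref{lemma: small configuration containing a path} applied to a shortest path in \( P_B \) for~\eqref{eq: distance comparison ii}. Your explicit care in taking a geodesic for the first inequality and in checking that the adjacency notions coincide is, if anything, slightly more precise than the paper's own write-up.
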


\begin{proof} 
    We first prove that~\eqref{eq: distance comparison ii} holds. 
    To this end, let \( \mathcal{P} \) be a path from \( p_1 \in P_1 \) to \( p_2 \in P_2 \) in \( P_B \) with \( |\mathcal{P}| = \dist_B^*(P_1,P_2) \). Then \( \mathcal{P} \) is a shortest path from \( p_1 \) to \( p_2 \), and hence by Lemma~\ref{lemma: small configuration containing a path} there is \( \omega^{(0)},\omega^{(1)} \in \Sigma_{P_B} \) such that \( \mathcal{P} \) is a path in \( \mathcal{G}(\omega^{(0)},\omega^{(1)}) \) and \( |\support \omega^{(0)}| + |\support \omega^{(1)}| \leq 12|\mathcal{P}| \). Since  \( \mathcal{P} \) is a path in \( \mathcal{G}(\omega^{(0)},\omega^{(1)}) \) from \( p_1 \in P_1 \) to \( p_2 \in P_2 \), we have \( P_1 \leftrightarrow P_2 \) in \( \mathcal{G}(\omega^{(0)},\omega^{(1)}) \), and hence 
    \begin{equation*}
        \dist_B(P_1,P_2) \leq \frac{1}{2} \bigl( |\support \omega^{(0)}| + |\support \omega^{(1)}| \bigr) 
        \leq \frac{12 |\mathcal{P}|}{2} = 6|\mathcal{P}| = 6\dist_{B}^*(P_1,P_2).
    \end{equation*} 
    This concludes the proof of~\eqref{eq: distance comparison ii}.

    We now show that~\eqref{eq: distance comparison} holds. To this end, assume that \( \omega \in \Sigma_{P_B} \) and \( \omega' \in \Sigma_{P_{B'}} \) are such that that \( P_1 \leftrightarrow P_2 \) in \( \mathcal{G}(\omega,\omega') \) and \( |\support \omega | + | \support \omega'| = 2\dist_{B,B'}(P_1,P_2) \). By the definition of \( \dist_{B,B'}(P_1,P_2) \), such plaquette configurations \( \omega \) and \( \omega' \) exists.  
    Since \( P_1 \leftrightarrow P_2 \) in \( \mathcal{G}(\omega,\omega') \), there exists at least one path in \( \mathcal{G}(\omega,\omega') \) between some \( p_1 \in P_1 \) and some \( p_2 \in P_2 \). Let \( \mathcal{P} \) be such a path. Then by definition, we have
    \begin{equation*}
        \dist_{B,B'}^*(P_1,P_2) \leq |\mathcal{P}|.
    \end{equation*}
    Moreover, by Lemma~\ref{lemma: upper path bound}, we have
    \begin{equation*}
        |\mathcal{P}| \leq \bigl(|\support \omega| + |\support \omega'| \bigr)/2 = \dist_{B,B'}(P_1,P_2).
    \end{equation*}
    Combining these equations, we obtain
    \begin{equation*}
        \dist_B^*(P_1,P_2) \leq \dist_{B,B'}(P_1,P_2)
    \end{equation*}
    as desired.
\end{proof}

\section{The probability of two sets being connected}\label{sec: probability of being connected}

The purpose of this section is to give a proof of the following result, which gives an upper bound on the probability that two given sets are connected.

\begin{proposition}\label{proposition: probability of connected vs probability of path}
    Let \( B \) and \( B' \) be two boxes in \( \mathbb{Z}^4 \) with \( B' \subseteq B \), let \( \beta \geq 0 \) be such that \(  30 \alpha(\beta) < 1 \), and let \( P_1,P_2 \subseteq  P_{B } \) be disjoint. Then
    \begin{equation*}
        \begin{split}
            &\mu_{B,\beta}\times\mu_{B',\beta}\Bigl( \bigl\{ \omega  \in \Sigma_{P_B},\, \omega' \in \Sigma_{P_B} \colon P_1 \leftrightarrow P_2 \text{ in }  \mathcal{G}(\omega,\omega') \bigr\} \Bigr) 
            \leq 
            \frac{C_1|P_1|\bigl( C_2 \alpha(\beta) \bigr)^{  \dist_{B,B'}(P_1,P_2) } }{2}  . 
        \end{split}
    \end{equation*} 
\end{proposition}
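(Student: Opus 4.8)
The plan is to bound the probability that $P_1 \leftrightarrow P_2$ in $\mathcal{G}(\omega, \omega')$ by summing, over all candidate optimal geodesics $\mathcal{P}$ connecting the two sets, the probability that the plaquettes along $\mathcal{P}$ are all frustrated in $(\omega,\omega')$. First I would observe that, by Lemma~\ref{lemma: optimal geodesic}, on the event $\{P_1 \leftrightarrow P_2\}$ there is an optimal geodesic $\mathcal{P} = (p_1, p_2, \ldots, p_\ell)$ from some $p_1 \in P_1$ to some $p_\ell \in P_2$ in $\mathcal{G}(\omega, \omega')$. By Lemma~\ref{lemma: upper path bound}, any such geodesic satisfies $|\mathcal{P}| \le \bigl(|\support\omega| + |\support\omega'|\bigr)/2$; combined with the definition of $\dist_{B,B'}$, this forces $|\mathcal{P}| \ge \dist_{B,B'}(P_1,P_2)$, so every relevant geodesic has length at least $m_0 \coloneqq \dist_{B,B'}(P_1,P_2)$. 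Then a union bound over the starting plaquette $p_1 \in P_1$ (there are $|P_1|$ choices, or rather we fix $p_1$ and union over $P_1$), over the length $\ell \ge m_0$, and over the optimal paths of that length starting at $p_1$, reduces matters to estimating, for a fixed path $\mathcal{P}$, the probability that $\image(\mathcal{P}) \subseteq \support\omega \cup \support\omega'$.

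Next I would estimate that single-path probability. Since $\image(\mathcal{P}) \subseteq \support\omega \cup \support\omega'$ means each $p_i$ is frustrated in $\omega$ or in $\omega'$, I would split $\image(\mathcal{P})$ into the part carried by $\omega$ and the part carried by $\omega'$, apply Lemma~\ref{lemma: irreducible form relative set} to extract, inside $\omega$ (resp. $\omega'$), an irreducible configuration whose support contains the $\omega$-part (resp. $\omega'$-part) of the path, and then invoke Proposition~\ref{proposition: vortex probability 2} with $P_0 \coloneqq \pm\image(\mathcal{P})$, so that $|P_0^+| = \ell$. This is precisely the situation that Proposition~\ref{proposition: vortex probability 2} was designed for: it yields a bound of order $2^\ell (M - \ell)\, 5^{M-\ell} \alpha(\beta)^M / (1-5\alpha(\beta))^2$ summed over $M \ge \ell$, which after summing the geometric-type series in $M$ gives something like $\frac{2^\ell \cdot 5^{-\ell}\alpha(\beta)^\ell}{(1-5\alpha(\beta))^2}\cdot\frac{5}{(1-5\alpha(\beta))^2}$ — in any case a constant times $(2\alpha(\beta)/5)^\ell$ or similar, up to the $(1-5\alpha(\beta))^{-2}$ denominators. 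Actually the key point is that for each fixed path of length $\ell$ the probability is bounded by $\frac{C}{(1-5\alpha(\beta))^2}\,\alpha(\beta)^{\ell}$ times a harmless polynomial, which by adjusting constants is $\le \frac{C}{(1-5\alpha(\beta))^2}(C'\alpha(\beta))^\ell$.

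Then I would combine this with Lemma~\ref{lemma: number of optimal paths}, which says there are at most $40 \cdot 15^{\ell-2}$ optimal paths of length $\ell$ starting at a fixed $p_1$. Multiplying the number of paths by the per-path probability produces a term of order $|P_1| \cdot 15^{\ell} \cdot \alpha(\beta)^\ell$ (up to constants and the denominators), and summing the geometric series over $\ell \ge m_0$ — which converges precisely because $30\alpha(\beta) < 1$ and $15 \cdot C_2' \alpha(\beta) < 1$ for the relevant constant — gives a bound of the form $\frac{C_1}{2}|P_1|\,(C_2\alpha(\beta))^{m_0}$ with $C_2 = 30$ and $C_1 = C_1(\beta)$ as defined in~\eqref{eq: C1}; the factor $30 = 2 \cdot 15$ is exactly the product of the "$2$" coming from the two orientations in the path-counting bound (or from the $2^{|P_0^+|}$ in Proposition~\ref{proposition: vortex probability 2}) and the "$15$" branching factor, and the shape of $C_1(\beta)$ with its $15^2(1-5\alpha(\beta))^2$ denominator and the $(1-30\alpha(\beta))^{-1}$ factor matches the bookkeeping of the two geometric sums (in $M$ and in $\ell$).

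The main obstacle is the careful bookkeeping in the single-path estimate: one must correctly identify which plaquettes of the geodesic lie in $\support\omega$ versus $\support\omega'$ (they need not alternate, and boundary effects near $\delta P_B$ could in principle interfere), verify that the irreducibility hypothesis~\ref{enum: vortex probability iii 2} of Proposition~\ref{proposition: vortex probability 2} is genuinely satisfied for the extracted configurations for \emph{every} admissible splitting $P = P_0 \cap \support\nu$, and then track all the constants through two nested geometric summations so that the final constants really collapse to $C_1(\beta)$ and $C_2 = 30$. Getting the constant $C_2 = 30$ rather than something larger requires using the optimality of the path (Lemma~\ref{lemma: number of optimal paths}) rather than a crude count of all paths, so one must be sure the geodesic can be taken optimal, which is exactly Lemma~\ref{lemma: optimal geodesic}.
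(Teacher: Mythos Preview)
Your overall strategy (optimal geodesic + union bound over paths + Proposition~\ref{proposition: vortex probability 2} + Lemma~\ref{lemma: number of optimal paths}) matches the paper's, but there is a genuine gap in the step where you deduce a lower bound on the geodesic length.

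You write that Lemma~\ref{lemma: upper path bound} combined with the definition of $\dist_{B,B'}$ ``forces $|\mathcal{P}| \ge \dist_{B,B'}(P_1,P_2)$''. This is false. Lemma~\ref{lemma: upper path bound} gives $|\mathcal{P}| \le \tfrac{1}{2}(|\support\omega|+|\support\omega'|)$, while the definition of $\dist_{B,B'}$ gives $\tfrac{1}{2}(|\support\omega|+|\support\omega'|) \ge \dist_{B,B'}(P_1,P_2)$. These two inequalities point in the same direction and do \emph{not} combine to bound $|\mathcal{P}|$ from below by $\dist_{B,B'}$. All you can say a priori is $|\mathcal{P}| \ge \dist_B^*(P_1,P_2)$, and by Lemma~\ref{lemma: distance comparison} one has $\dist_B^* \le \dist_{B,B'}$, possibly with strict inequality. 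So if you sum only over $\ell \ge \dist_{B,B'}$, your union bound omits all geodesics of length between $\dist_B^*$ and $\dist_{B,B'}-1$, and the argument is incomplete.

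The paper repairs this exactly by exploiting the two inequalities above \emph{separately}: for a fixed path $\mathcal{P}$ of length $\ell$, the event ``$\mathcal{P}$ is a geodesic in $\mathcal{G}(\omega,\omega')$'' forces $\tfrac{1}{2}(|\support\omega|+|\support\omega'|) \ge \max(\ell,\dist_{B,B'})$, so Proposition~\ref{proposition: vortex probability 2} is applied once with $P_0 = \pm\image(\mathcal{P})$ and $M = \max(\ell,\dist_{B,B'})$ (not summed over $M$; your remark about ``summing the geometric-type series in $M$'' is a confusion --- the geometric sum over support sizes is already inside Proposition~\ref{proposition: vortex probability 2}). This yields Lemma~\ref{lemma: geodesic probability}. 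One then sums over $\ell \ge \dist_B^*$ and splits the sum at $\ell = \dist_{B,B'}$: for $\ell < \dist_{B,B'}$ the exponent on $\alpha(\beta)$ stays at $\dist_{B,B'}$ while the path count grows as $15^\ell$, and for $\ell \ge \dist_{B,B'}$ one gets a straight geometric series in $30\alpha(\beta)$. The two pieces together produce the constants $C_1(\beta)$ and $C_2=30$.
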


Before giving a proof of this result at the end of this section, we will state and prove the following lemma, which gives an upper bound on the probability that a given path \( \mathcal{P} \) is a path in a random graph \( \mathcal{G}(\omega,\omega') \).

\begin{lemma}\label{lemma: geodesic probability}
    Let \( B \) and \( B' \) be two a boxes in \( \mathbb{Z}^4 \) with \( B' \subseteq B \), and let \( \beta \geq 0 \) be such that \(  5\alpha(\beta) < 1 \). Further, let \( p_1,p_2 \in P_{B} \) be distinct, and let \( \mathcal{P}  \) be a path from \( p_1 \) to \( p_2 \) in \( P_B \).
    Then
    \begin{equation*}
        \begin{split}
            &
            \mu_{B,\beta} \times \mu_{B',\beta} \bigl( \bigl\{ \omega \in   \Sigma_{P_B},\, \omega' \in   \Sigma_{P_{B'}} \colon \mathcal{P} \text{ is a geodesic in } \mathcal{G}(\omega,\omega') \bigr\} \bigr)  
            \leq  
             \frac{    5^{-|\mathcal{P}|} (10\alpha(\beta))^{M}}{2(1 - 5\alpha(\beta))^2 },
        \end{split}
    \end{equation*} 
    where \( M = \max\Bigl( |\mathcal{P}|, \dist_{B,B'}(p_1,p_2) \Bigr) \).
\end{lemma}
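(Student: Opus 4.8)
The plan is to bound the probability that $\mathcal{P}$ is a geodesic in $\mathcal{G}(\omega,\omega')$ by relating the event to one where a certain irreducible plaquette configuration is "visible" inside $\omega$ and $\omega'$, and then apply Proposition~\ref{proposition: vortex probability 2}. First I would fix $\mathcal{P} = (p_1,\ldots,p_\ell)$ with $\ell = |\mathcal{P}|$, and observe that if $\mathcal{P}$ is a geodesic in $\mathcal{G}(\omega,\omega')$, then in particular $\pm\image(\mathcal{P}) \subseteq \support\omega \cup \support\omega'$, so $\image(\mathcal{P})$ meets $\support\omega \cup \support\omega'$ in $\ell$ plaquettes (one orientation each). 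The key point is that, since $\mathcal{P}$ is a \emph{geodesic}, these $\ell$ plaquettes cannot all lie in a single small configuration: by Lemma~\ref{lemma: upper path bound}, any $\omega,\omega'$ realizing $\mathcal{P}$ as a geodesic must satisfy $2\ell \le |\support\omega| + |\support\omega'|$, and more importantly the portion of $\support\omega \cup \support\omega'$ that "carries" the path must have size at least $2M$ where $M = \max(\ell, \dist_{B,B'}(p_1,p_2))$ — the $\dist_{B,B'}$ lower bound coming from the definition of $\dist_{B,B'}$ itself (if a configuration connects $p_1$ to $p_2$, its total support is at least $2\dist_{B,B'}(p_1,p_2)$).

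Next I would make this precise by extracting, via Lemma~\ref{lemma: irreducible form relative set}, irreducible pieces: set $P_0 := \{p_1, -p_1\}$ (or perhaps $\{p_1,-p_1,p_2,-p_2\}$, to be decided based on how the distance bound is used), and note that on the event in question there exist $\nu \in \Sigma_{P_B}$, $\nu' \in \Sigma_{P_{B'}}$ with $\omega|_{\support\nu} = \nu$, $\omega'|_{\support\nu'} = \nu'$, with $P_0 \subseteq \support\nu \cup \support\nu'$, satisfying the $P$-irreducibility condition \ref{enum: vortex probability iii 2}, and with $|\support\nu| + |\support\nu'| \ge 2M$. The size lower bound is the crux: the connected component of $p_1$ in $\mathcal{G}(\omega,\omega')$ contains all of $\pm\image(\mathcal{P})$ (since $\mathcal{P}$ is a path connecting $p_1$ to $p_2$ within it), hence contains at least $\ell$ distinct unoriented plaquettes; and because that component, restricted appropriately, yields a closed form connecting $p_1$ and $p_2$, its support also has size $\ge 2\dist_{B,B'}(p_1,p_2)$. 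So the relevant irreducible configuration has support $\ge 2M$, placing the pair $(\omega,\omega')$ in $\bar\Pi_{P_0, M}^{\ge}$.

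Then I would invoke Proposition~\ref{proposition: vortex probability 2} with $P_0$ as above (so $|P_0^+| = 1$), giving
\begin{equation*}
\mu_{B,\beta} \times \mu_{B',\beta}\bigl(\bar\Pi_{P_0,M}^{\ge}\bigr) \le \frac{2^{1}(M-1)\, 5^{M-1}\alpha(\beta)^M}{(1-5\alpha(\beta))^2} = \frac{2(M-1)(5\alpha(\beta))^M}{5(1-5\alpha(\beta))^2}.
\end{equation*}
To reach the claimed bound $\frac{5^{-|\mathcal{P}|}(10\alpha(\beta))^M}{2(1-5\alpha(\beta))^2}$, I would need an additional gain of roughly $5^{-\ell} \cdot 2^M / (M-1)$ relative to what Proposition~\ref{proposition: vortex probability 2} gives directly; this extra factor must come from a more careful enumeration — counting the geodesic $\mathcal{P}$ among optimal paths (using Lemma~\ref{lemma: optimal geodesic} to pass to an optimal geodesic and Lemma~\ref{lemma: number of optimal paths} to bound their number by $40 \cdot 15^{\ell-2}$), combined with tracking that the $\ell$ fixed plaquettes of $\mathcal{P}$ contribute $\alpha(\beta)^{\ell}$ worth of activity "for free" while the remaining $M - \ell$ plaquettes of the irreducible extension each contribute a factor absorbed into the $5$-to-$1$ growth estimate of Lemma~\ref{lemma: counting vortex configurations}. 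The main obstacle I anticipate is precisely this bookkeeping: correctly separating the "path part" (whose location is essentially determined, up to the optimal-path count) from the "extension part" (handled by the vortex-counting propositions) without double-counting, and verifying that the numerical constants collapse to exactly $5^{-|\mathcal{P}|}(10\alpha(\beta))^M / (2(1-5\alpha(\beta))^2)$; the factor $10 = 2 \cdot 5$ strongly suggests the argument runs the counting of Lemma~\ref{lemma: counting vortex configurations} with a factor $2$ per plaquette (for the two choices of which of $\omega,\omega'$ a plaquette lands in, as in the proof of Proposition~\ref{proposition: vortex probability 2}) together with the factor $5$ per 3-cell, over $M$ steps, while the starting configuration along $\mathcal{P}$ is pinned down.
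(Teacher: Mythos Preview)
Your proposal has a genuine gap in the choice of $P_0$ when invoking Proposition~\ref{proposition: vortex probability 2}. You take $P_0=\{p_1,-p_1\}$ (or $\{p_1,-p_1,p_2,-p_2\}$), so $|P_0^+|\in\{1,2\}$, and then correctly observe that the resulting bound is off by a factor of roughly $5^{-|\mathcal{P}|}$. You try to recover this factor by a separate path--counting argument using Lemmas~\ref{lemma: optimal geodesic} and~\ref{lemma: number of optimal paths}; but those lemmas are used only in the \emph{next} step (the proof of Proposition~\ref{proposition: probability of connected vs probability of path}), where one sums over all admissible $\mathcal{P}$. In the present lemma $\mathcal{P}$ is \emph{fixed}, so the number of optimal paths is irrelevant.

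The missing idea is to take $P_0=\image\mathcal{P}\cup(-\image\mathcal{P})$, so that $|P_0^+|=|\mathcal{P}|$. This choice makes both difficulties disappear at once. First, if $\mathcal{P}$ is a path in $\mathcal{G}(\omega,\omega')$ then $P_0\subseteq\support\omega\cup\support\omega'$, and any $(\nu,\nu')$ satisfying condition~\ref{enum: vortex probability ii 2} of Proposition~\ref{proposition: vortex probability 2} contains all of $\image\mathcal{P}$; hence $p_1\leftrightarrow p_2$ in $\mathcal{G}(\nu,\nu')$, giving $|\support\nu|+|\support\nu'|\ge 2\dist_{B,B'}(p_1,p_2)$ directly from the definition, while the geodesic hypothesis and Lemma~\ref{lemma: upper path bound} give $|\support\nu|+|\support\nu'|\ge 2|\mathcal{P}|$. (With your small $P_0$ this fails: the $\{p_1,-p_1\}$-irreducible piece of $\omega$ could be a single minimal vortex of size $12$ that does not even touch $p_2$, so there is no reason for the extracted $(\nu,\nu')$ to have combined support $\ge 2M$.) Second, the bound from Proposition~\ref{proposition: vortex probability 2} now already carries the factor $2^{|\mathcal{P}|}\,5^{M-|\mathcal{P}|}\alpha(\beta)^{M}$, and the elementary inequality $x\le 2^{x-1}$ applied to the polynomial prefactor converts the remaining $(M-|\mathcal{P}|+1)$ into $2^{M-|\mathcal{P}|}$, yielding $5^{-|\mathcal{P}|}(10\alpha(\beta))^M$ with no extra bookkeeping.
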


\begin{proof}
    By Lemma~\ref{lemma: upper path bound}, if \( \omega \in \Sigma_{P_B} \), \( \omega' \in \Sigma_{P_{B'}} \), and \( \mathcal{P} \) is a geodesic in \( \mathcal{G}(\omega, \omega') \), then 
    \begin{equation*}
        \frac{|\support \omega  |+| \support \omega'|}{2} \geq |\mathcal{P}|.
    \end{equation*} 
    On the other hand, by the definition of \( \dist_{B,B'} \), in this case we must also have 
    \begin{equation*}
        \frac{|\support \omega  |+| \support \omega'|}{2} \geq \dist_{B,B'}(p_1,p_2).
    \end{equation*} 
    Consequently,
    \begin{align*}
        &\mu_{B,\beta} \times \mu_{B',\beta} \bigl( \bigl\{ \omega\in   \Sigma_{P_B} ,\, \omega' \in   \Sigma_{P_{B'}} \colon \mathcal{P} \text{ is a geodesic from $p_1$ to $p_2$ in } \mathcal{G} (\omega,\omega') \bigr\} \bigr) 
        \\&\qquad
        \leq   
        \mu_{B,\beta} \times \mu_{B',\beta} \Bigl( \Bigl\{ \omega \in   \Sigma_{P_B},\, \omega' \in   \Sigma_{P_{B'}} \colon \mathcal{P} \text{ is a path from $p_1$ to $p_2$ in } \mathcal{G} (\omega,\omega') \text{ and } 
        \\&\hspace{12em}  \frac{|\support \omega | + |  \support \omega'|}{2} \geq \max\bigl(  |\mathcal{P}|, \dist_{B,B'}(p_1,p_2) \bigr) \Bigr\} \Bigr) .
    \end{align*} 
    Applying Proposition~\ref{proposition: vortex probability 2} with \( P \coloneqq \image\mathcal{P} \cup (- \image \mathcal{P}) \) and \( M \coloneqq \max\bigl(  |\mathcal{P}| , \dist_{B,B'}(p_1,p_2) \bigr) \), we thus obtain
    \begin{equation*}
        \begin{split}
            &
            \mu_{B,\beta} \times \mu_{B',\beta} \bigl( \bigl\{ \omega \in   \Sigma_{P_B} ,\, \omega' \in   \Sigma_{P_{B'}} \colon \mathcal{P} \text{ is a geodesic in } \mathcal{G}(\omega,\omega') \bigr\} \bigr) 
            \\&\qquad
            \leq  
             \frac{  2^{|\mathcal{P}|} (M-|\mathcal{P}|) \, 5^{M-|\mathcal{P}|} \alpha(\beta)^{M}}{(1 - 5\alpha(\beta))^2 }. 
        \end{split}
    \end{equation*} 
    Since \( p_1 \) and \( p_2 \) are distinct, we have \( |\mathcal{P}| \geq 2 \), and hence \( M -|\mathcal{P}| \geq 1. \) Using the inequality \( x \leq 2^{x-1} \), valid for all \( x \geq 1 \), 
    the desired conclusion follows.
\end{proof}

We now give a proof of Proposition~\ref{proposition: probability of connected vs probability of path}.

\begin{proof}[Proof of Proposition~\ref{proposition: probability of connected vs probability of path}]
    Let \( \omega \in\Sigma_{P_B} \) and \( \omega' \in\Sigma_{P_{B'}} \), and assume that \( P_1 \leftrightarrow P_2 \) in \( \mathcal{G}(\omega,\omega') \). Then, by Lemma~\ref{lemma: optimal geodesic}, there is \( p_1 \in P_1 \), \( p_2 \in P_2 \), and an optimal geodesic from \( p_1 \) to \( p_2 \) in \( \mathcal{G}(\omega, \omega') \).
    Consequently, we have the upper bound
    \begin{equation}\label{eq: connected to skeleton probability}
        \begin{split}
            &\mu_{B,\beta}\times\mu_{B',\beta}\Bigl( \bigl\{ \omega \in \Sigma_{P_B} ,\omega' \in \Sigma_{P_{B'}}   \colon P_1 \leftrightarrow P_2 \text{ in }  \mathcal{G}(\omega,\omega) \bigr\} \Bigr) 
            \\&\qquad \leq  
            \mu_{B,\beta}\times\mu_{B',\beta}\Bigl( \bigl\{ \omega \in \Sigma_{P_B} ,\, \omega' \in \Sigma_{P_{B'}}   \colon  \exists p_1 \in P_1,\, p_2 \in P_2 \text{ and }  \\[-.8ex]&\hspace{11em} \text{an optimal geodesic } \mathcal{P} \text{ from $p_1$ to $p_2$ in } \mathcal{G}(\omega,\omega')\bigr\}\Bigr) 
        \end{split}
    \end{equation}
    
    By Lemma~\ref{lemma: number of optimal paths}, given \( m \geq 2 \) and \( p_1 \in P_1 \), there are at most \(  40 \cdot 15^{m-2} \) optimal paths \( \mathcal{P} \) of length \( m \) which starts at \( p_1 \) and ends at some \( p \in P_B \). 
    Since \( P_1 \) and \( P_2 \) are disjoint, we have \( \dist_B^*(P_1,P_2)  \geq 2 \). 
    Consequently, for any \( m \geq \dist_B^*(P_1,P_2) \), there can be at most \( |P_1|\cdot 40 \cdot 15^{m-2} \) optimal paths which starts at some \( p_1 \in P_1\) and ends at some \( p_2 \in P_2 \).
    In particular, this implies that for any \( m \geq \dist_B^*(P_1,P_2) \), there are at most \( |P_1|\cdot 40 \cdot 15^{m-2} \) paths which starts at some \( p_1 \in P_1\) and ends at some \( p_2 \in P_2 \) which can be an optimal geodesic in \( \mathcal{G}(\omega,\omega') \) for some \( \omega \in \Sigma_{P_B} \) and \( \omega' \in \Sigma_{P_{B'}} \).
    On the other hand, given a path \( \mathcal{P} \) from \( p_1 \) to \( p_2 \), by Lemma~\ref{lemma: geodesic probability} we have
    \begin{equation*}
        \begin{split}
            &
            \mu_{B,\beta} \times \mu_{B,\beta} \bigl( \bigl\{ \omega \in   \Sigma_{P_B} ,\, \omega' \in   \Sigma_{P_{B'}} \colon \mathcal{P} \text{ is a geodesic in } \mathcal{G}(\omega,\omega') \bigr\} \bigr) 
            \\&\qquad
            \leq  
            \frac{5^{-|\mathcal{P}|} (10\alpha(\beta))^{\max ( |\mathcal{P}|, \dist_{B,B'}(p_1,p_2) )} }{2(1 - 5\alpha(\beta))^2 }. 
        \end{split}
    \end{equation*} 
    Combining there observations, and summing over all \( m \geq \dist_B^*(P_1,P_2) \), we thus obtain
    \begin{align} 
            &\mu_{B,\beta}\times\mu_{B',\beta}\Bigl( \bigl\{ \omega  \in \Sigma_{P_B}  ,\, \omega'\in \Sigma_{P_{B'}}   \colon  \exists p_1 \in P_1,\, p_2 \in P_2 \text{ and }  \\[-.8ex]&\hspace{8em} \text{an optimal geodesic } \mathcal{P}  \text{ from $p_1$ to $p_2$ in } \mathcal{G}(\omega,\omega')\bigr\}\Bigr)  \nonumber
            \\&\qquad\leq
            \sum_{m = \dist_B^*(P_1,P_2)}^\infty 40 |P_1| \cdot 15^{m-2}
            \cdot 
            \frac{5^{-m} \bigl( 10\alpha(\beta) \bigr)^{\max ( m, \dist_{B,B'}(P_1,P_2) )} }{2(1 - 5\alpha(\beta))^2 } \nonumber
            \\&\qquad=
            \frac{20 |P_1|}{15^2 (1 - 5\alpha(\beta))^2} \sum_{m = \dist_{B}^*(P_1,P_2)}^\infty  3^{m} \bigl( 10\alpha(\beta) \bigr)^{\max ( m, \dist_{B,B'}(P_1,P_2) )} 
             \label{eq: last line}
            .  
    \end{align}  
    We now rewrite the sum in~\eqref{eq: last line} as follows. First, note that by Lemma~\ref{lemma: distance comparison}, we have 
    \[
    \dist_{B}^*(P_1,P_2) \leq \dist_{B,B'}(P_1,P_2).
    \]
    Consequently,
    \begin{align} 
            &
            \sum_{m = \dist_B^*(P_1,P_2)}^\infty 
            3^{m} \bigl( 10 \alpha(\beta) \bigr)^{\max ( m, \dist_{B,B'}(p_1,p_2) )} \nonumber
            \\&\qquad= 
            \sum_{m = \dist_B^*(P_1,P_2)}^{\dist_{B,B'}(P_1,P_2) -1} 
            3^{m} \bigl( 10 \alpha(\beta) \bigr)^{  \dist_{B,B'}(P_1,P_2) }
            + 
            \sum_{m = \dist_{B,B'}(P_1,P_2)}^\infty 
            \bigl( 30 \alpha(\beta) \bigr)^{ m} 
            .  \label{eq: two sums}
    \end{align}  
    The first term in~\eqref{eq: two sums} is a finite geometric sum, and the second term is an infinite geometric sum which converge since \( 30\alpha(\beta)<1 \). Hence the previous equation is equal to

    \begin{equation*}
        \begin{split}
            & 
            \frac{3^{\dist_{B,B'}(P_1,P_2) } - 3^{\dist_B^*(P_1,P_2)}}{3-1}  \bigl( 10 \alpha(\beta) \bigr)^{  \dist_{B,B'}(P_1,P_2) } 
            + 
            \frac{\bigl(30 \alpha(\beta) \bigr)^{\dist_{B,B'}(P_1,P_2) } }{1-30 \alpha(\beta)   }.
        \end{split}
    \end{equation*}  
    Since \( 30 \alpha(\beta) < 1 \), we can bound the previous equation from above by
    \begin{equation*}
        \begin{split}
            & \biggl[
             \frac{1 }{2}  
            + 
            \frac{1 }{1-30 \alpha(\beta) }
            \biggr] \bigl( 30 \alpha(\beta) \bigr)^{  \dist_{B,B'}(P_1,P_2) }.
        \end{split}
    \end{equation*}
    Combining the previous equations, we obtain
    \begin{align*}
        &\mu_{B,\beta}\times\mu_{B',\beta}\Bigl( \bigl\{ \omega  \in \Sigma_{P_B},\,  \omega'  \in \Sigma_{P_{B'}}   \colon P_1 \leftrightarrow P_2 \text{ in }  (\omega,\omega') \bigr\} \Bigr) 
        \\&\qquad\leq  
        \frac{20 |P_1|}{15^2 (1 - 5\alpha(\beta))^2} \biggl[
             \frac{1 }{2}  
            + 
            \frac{1 }{1-30 \alpha(\beta)}
            \biggr] \bigl( 30 \alpha(\beta) \bigr)^{  \dist_{B,B'}(P_1,P_2). }
    \end{align*}
    Recalling the definition of \( C_1 \) from~\eqref{eq: C1} and the definition of \( C_2 \) from~\eqref{eq: C2}, we obtain the desired conclusion.
\end{proof}

\section{Covariance and connected sets}\label{sec: covariance}

We now use the notion of connected sets to obtain an upper bound on the covariance of local functions.

\begin{lemma}\label{lemma: covariance and chains}
    Let \( B \) be a box in \( \mathbb{Z}^4 \), and let \( \beta \geq 0 \). Further, let \( P_1 \subseteq P_B \) and \( P_2  \subseteq P_B \) be symmetric and disjoint, and let \( f_1,f_2 \colon \Sigma_{B,2} \to \mathbb{C} \) be such that \( f_1( \omega) = f_1( \omega|_{P_1}) \) and \( f_2( \omega) = f_2( \omega|_{P_2}) \) for all \(  \omega \in \Sigma_{P_B} \). Finally, let \( \omega, \omega' \sim \mu_{B,\beta} \) be independent. Then 
    \begin{equation*}
        \Bigl| \Cov \bigl(  f_1 ( \omega  ), f_2 (\omega) \bigr) \Bigr|
        \leq
        2\| f_1 \|_\infty \|  f_2 \|_\infty \, \mu_{B,\beta}\times\mu_{B,\beta} \Bigl( \bigl\{ \omega, \omega' \in  \Sigma_{P_B} \colon  P_1 \leftrightarrow P_2 \text{ in } \mathcal{G}(\omega,\omega') \bigr\} \Bigr).
    \end{equation*}
\end{lemma}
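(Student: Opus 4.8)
The plan is to use a standard coupling identity for the covariance together with Lemma~\ref{lemma: disconnected expectations}. First I would write, using independence of \( \omega \) and \( \omega' \) and the fact that \( \omega' \) has the same law as \( \omega \),
\begin{equation*}
    \Cov\bigl( f_1(\omega), f_2(\omega) \bigr)
    = \mathbb{E}_{B,\beta}\times\mathbb{E}_{B,\beta}\Bigl[ f_1\bigl(\omega|_{P_1}\bigr)\bigl( f_2\bigl(\omega|_{P_2}\bigr) - f_2\bigl(\omega'|_{P_2}\bigr)\bigr)\Bigr],
\end{equation*}
since \( \mathbb{E}_{B,\beta}[f_2(\omega'|_{P_2})] = \mathbb{E}_{B,\beta}[f_2(\omega|_{P_2})] = \mathbb{E}_{B,\beta}[f_2] \) and \( \omega' \) is independent of \( \omega \), so that \( \mathbb{E}[f_1(\omega|_{P_1}) f_2(\omega'|_{P_2})] = \mathbb{E}[f_1]\,\mathbb{E}[f_2] \).

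Next I would split the expectation according to whether \( P_1 \leftrightarrow P_2 \) in \( \mathcal{G}(\omega,\omega') \) or not. On the event \( \{ P_1 \nleftrightarrow P_2 \} \), Lemma~\ref{lemma: disconnected expectations} (applied with \( f = f_1 \) and \( g = f_2 \), which is legitimate since \( f_1,f_2 \) depend only on the restrictions to \( P_1,P_2 \) respectively, and \( P_1,P_2 \) are symmetric and disjoint) gives
\begin{equation*}
    \mathbb{E}_{B,\beta}\times\mathbb{E}_{B,\beta}\Bigl[ f_1\bigl(\omega|_{P_1}\bigr) f_2\bigl(\omega|_{P_2}\bigr)\,\mathbb{1}\bigl[P_1\nleftrightarrow P_2\bigr]\Bigr]
    = \mathbb{E}_{B,\beta}\times\mathbb{E}_{B,\beta}\Bigl[ f_1\bigl(\omega|_{P_1}\bigr) f_2\bigl(\omega'|_{P_2}\bigr)\,\mathbb{1}\bigl[P_1\nleftrightarrow P_2\bigr]\Bigr],
\end{equation*}
so the two corresponding contributions to the displayed difference cancel exactly. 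Hence
\begin{equation*}
    \Cov\bigl( f_1(\omega), f_2(\omega) \bigr)
    = \mathbb{E}_{B,\beta}\times\mathbb{E}_{B,\beta}\Bigl[ f_1\bigl(\omega|_{P_1}\bigr)\bigl( f_2\bigl(\omega|_{P_2}\bigr) - f_2\bigl(\omega'|_{P_2}\bigr)\bigr)\,\mathbb{1}\bigl[P_1\leftrightarrow P_2 \text{ in } \mathcal{G}(\omega,\omega')\bigr]\Bigr].
\end{equation*}

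Finally I would bound the right-hand side in absolute value: \( |f_1(\omega|_{P_1})| \le \|f_1\|_\infty \) and \( |f_2(\omega|_{P_2}) - f_2(\omega'|_{P_2})| \le 2\|f_2\|_\infty \), and the indicator has expectation \( \mu_{B,\beta}\times\mu_{B,\beta}(P_1\leftrightarrow P_2) \), which yields the claimed bound \( 2\|f_1\|_\infty\|f_2\|_\infty\,\mu_{B,\beta}\times\mu_{B,\beta}\bigl(\{P_1\leftrightarrow P_2\}\bigr) \). The only mildly delicate point is the bookkeeping in the first step — making sure the cross term factorizes as \( \mathbb{E}[f_1]\mathbb{E}[f_2] \) and that all four expectations are with respect to the product measure so that Lemma~\ref{lemma: disconnected expectations} applies verbatim; there is no real analytic obstacle here, the argument is purely a coupling/symmetrization computation.
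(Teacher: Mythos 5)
Your argument is correct and is essentially the paper's own proof: both rest on Lemma~\ref{lemma: disconnected expectations} to show that the contribution from the event \( \{P_1 \nleftrightarrow P_2\} \) cancels between \( f_2(\omega|_{P_2}) \) and \( f_2(\omega'|_{P_2}) \), leaving only the connected event, which is then bounded trivially. The only difference is cosmetic — you start from the symmetrized identity \( \Cov = \mathbb{E}\times\mathbb{E}\bigl[f_1(\omega)\bigl(f_2(\omega)-f_2(\omega')\bigr)\bigr] \) and then split, whereas the paper splits \( \mathbb{E}[f_1 f_2] \) first and reassembles; the resulting computation is identical.
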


\begin{proof}
    Note first that
    \begin{equation*}
        \begin{split}
            \mathbb{E}_{B,\beta}\bigl[  f_1(\omega)  f_2(\omega )\bigr]
            &
            =
            \mathbb{E}_{B,\beta}\times \mathbb{E}_{B,\beta}\Bigl[ f_1(\omega)  f_2(\omega ) \cdot \mathbb{1} \bigl[ P_1 \leftrightarrow P_2 \text{ in } \mathcal{G}(\omega,\omega') \bigr] \Bigr] 
            \\&\qquad + 
            \mathbb{E}_{B,\beta} \times \mathbb{E}_{B,\beta}\Bigl[ f_1(\omega)  f_2(\omega )\cdot \mathbb{1} \bigl[ P_1 \nleftrightarrow P_2 \text{ in } \mathcal{G}(\omega,\omega') \bigr] \Bigr].
        \end{split}
    \end{equation*}
    Since \( f_1(\omega'') = f_1(\omega''|_{P_1}) \) and \( f_2(\omega'') = f_1(\omega''|_{P_2}) \) for all \(  \omega'' \in \Sigma_{P_B} \), we can apply Lemma~\ref{lemma: disconnected expectations} to obtain
    \begin{equation*}
        \begin{split}
            & 
            \mathbb{E}_{B,\beta}\times \mathbb{E}_{B,\beta} \Bigl[ f_1(\omega) f_2(\omega) \cdot \mathbb{1} \bigl[ P_1 \nleftrightarrow P_2 \text{ in } \mathcal{G}(\omega,\omega') \bigr] \Bigr]
            \\&\qquad 
            = 
            \mathbb{E}_{B,\beta}\times \mathbb{E}_{B,\beta} \Bigl[  f_1(\omega)f_2(\omega') \cdot \mathbb{1} \bigl[ P_1 \nleftrightarrow P_2 \text{ in } \mathcal{G}(\omega,\omega') \bigr] \Bigr]
            \\&\qquad
            = 
            \mathbb{E}_{B,\beta}\times \mathbb{E}_{B,\beta} \bigl[  f_1(\omega)  f_2(\omega')  \bigr]
            -
            \mathbb{E}_{B,\beta}\times \mathbb{E}_{B,\beta} \Bigl[ f_1(\omega) f_2(\omega') \cdot \mathbb{1} \bigl[ P_1 \leftrightarrow P_2 \text{ in } \mathcal{G}(\omega,\omega') \bigr] \Bigr]
            \\&\qquad
            = 
            \mathbb{E}_{B,\beta} \bigl[ f_1(\omega)  \bigr] \, 
            \mathbb{E}_{B,\beta} \bigl[  f_2(\omega')  \bigr]
            -
            \mathbb{E}_{B,\beta} \times \mathbb{E}_{B,\beta} \Bigl[ f_1(\omega) f_2(\omega') \cdot \mathbb{1} \bigl[ P_1 \leftrightarrow P_2 \text{ in } \mathcal{G}(\omega,\omega') \bigr] \Bigr].
        \end{split}
    \end{equation*}
    Combining the previous equations, we obtain
    \begin{equation*}
        \begin{split}
            &\mathbb{E}_{B,\beta} \bigl[  f_1(\omega)  f_2(\omega)\bigr]
            \\&\qquad
            =
            \mathbb{E}_{B,\beta} \bigl[ f_1(\omega)  \bigr] \, 
            \mathbb{E}_{B,\beta} \bigl[  f_2(\omega')  \bigr] 
            -
            \mathbb{E}_{B,\beta}\times \mathbb{E}_{B,\beta}\Bigl[  f_1(\omega)  f_2(\omega') \cdot \mathbb{1} \bigl[ P_1 \leftrightarrow P_2 \text{ in } \mathcal{G}(\omega,\omega') \bigr] \Bigr] 
            \\&\qquad\qquad
            +
            \mathbb{E}_{B,\beta}\times \mathbb{E}_{B,\beta} \Bigl[ f_1(\omega) f_2(\omega) \cdot \mathbb{1} \bigl[ P_1 \leftrightarrow P_2 \text{ in } \mathcal{G}(\omega,\omega') \bigr] \Bigr],
        \end{split}
    \end{equation*}
    and hence
    \begin{equation*}
        \begin{split}
            \Cov \Bigl(  f_1 ( \omega), f_2 (\omega) \Bigr)
            &=
            \mathbb{E}_{B,\beta} \times \mathbb{E}_{B,\beta}\Bigl[  f_1(\omega) f_2(\omega) \cdot \mathbb{1} \bigl[ P_1\leftrightarrow P_2 \text{ in } \mathcal{G}(\omega,\omega') \bigr] \Bigr]
            \\&\qquad
            -
            \mathbb{E}_{B,\beta} \times \mathbb{E}_{B,\beta}\Bigl[ f_1(\omega)f_2(\omega') \cdot \mathbb{1} \bigl[ P_1 \leftrightarrow P_2 \text{ in } \mathcal{G}(\omega,\omega') \bigr] \Bigr].
        \end{split}
    \end{equation*}
    From this the desired conclusion immediately follows.
\end{proof}

We now combine the previous lemma with Proposition~Proposition~\ref{proposition: probability of connected vs probability of path} to obtain our first main result.

\begin{proof}[Proof of Theorem~\ref{theorem: correlation length}]
    By combining Lemma~\ref{lemma: covariance and chains} and Proposition~\ref{proposition: probability of connected vs probability of path}, we immediately obtain
    \begin{equation*}
        \Bigl| \Cov \bigl(  f_1(d\sigma ), f_2 (d\sigma) \bigr)\Bigr| \leq 
            C_1 \bigl( C_2 \alpha(\beta)\bigr)^{ \dist_{B}(P_1,P_2) } \|f_1\|_\infty \, \|f_2 \|_\infty .
    \end{equation*}
    This concludes the proof of the first part of Theorem~\ref{theorem: correlation length}. 
    
    To see that the second claim of the theorem holds, we apply the first part of the theorem with \( f_1(\omega) = \tr \rho(\omega_{p_1}) \), \( f_2(\omega) = \tr \rho(\omega_{p_2}) \), \( P_1 = \{ p_1 \} \), and \( P_2 = \{ p_2 \} \), and note that since \( \rho \) is unitary, we have \( \| f_1 \|_\infty \leq \dim \rho \) and \( \| f_2 \|_\infty \leq \dim \rho \).
\end{proof}

\section{Total variation and connected sets}\label{sec: total variation}

\begin{lemma}\label{lemma: total variation bound}
    Let \( B' \subsetneq B\) be two boxes in \( \mathbb{Z}^4 \), and let \( \beta \geq 0 \). Let \( P_0 \subseteq P_{B'} \). Further, let \(  \omega  \sim \mu_{B,\beta} \) and \(  \omega'  \sim \mu_{B',\beta} \). Then
    \begin{equation*}
        \begin{split}
            &\dist_{TV} ( \omega|_{P_0},  \omega'|_{P_0}) 
            \leq  
            \mu_{B,\beta}\times \mu_{B',\beta}\Bigl( \bigl\{ \omega \in \Sigma_{P_B}, \omega' \in \Sigma_{P_{B'}} \colon
             P_0 \leftrightarrow (P_B\smallsetminus P_{B'})   \text{ in }   \mathcal{G}(\omega,\omega') \bigr\} \Bigr).
        \end{split}
    \end{equation*}
\end{lemma}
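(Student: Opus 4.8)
The plan is to use the coupling characterisation of total variation distance: if $(\zeta,\zeta')$ is any coupling of $\mu_{B,\beta}$ and $\mu_{B',\beta}$ then $\dist_{TV}(\zeta|_{P_0},\zeta'|_{P_0})\le\mathbb{P}(\zeta|_{P_0}\neq\zeta'|_{P_0})$, so it suffices to build one coupling whose disagreement event on $P_0$ is contained in the connection event for an \emph{independent} pair. (We may assume $P_0$ is symmetric.) Concretely, sample $\omega\sim\mu_{B,\beta}$ and $\omega''\sim\mu_{B',\beta}$ independently and, exactly as in the proof of Lemma~\ref{lemma: disconnected expectations}, let $\hat P_0=\hat P_0(\omega,\omega'')=\{p\in P_B:\partial(\hat\partial p)\leftrightarrow P_0\text{ in }\mathcal{G}(\omega,\omega'')\}$ be the set of plaquettes sharing a $3$-cell with some plaquette connected to $P_0$. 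Writing $\mathrm{Esc}$ for the event $\{P_0\leftrightarrow(P_B\smallsetminus P_{B'})\text{ in }\mathcal{G}(\omega,\omega'')\}$, I put $\zeta:=\omega$ and set $\zeta':=\omega''$ on $\mathrm{Esc}$ and $\zeta':=\omega|_{\hat P_0}+\omega''|_{P_{B'}\smallsetminus\hat P_0}$ on $\mathrm{Esc}^c$; so on $\mathrm{Esc}^c$ one re-samples the second configuration on the explored region $\hat P_0$ to agree with the first.

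Next I would check that $\zeta'$ is well defined and that disagreement forces $\mathrm{Esc}$. By the Bianchi lemma, as in Lemma~\ref{lemma: disconnected expectations}, $\omega|_{\hat P_0}\in\Sigma_{P_B}$, $\omega''|_{\hat P_0}\in\Sigma_{P_{B'}}$, and $\omega''=\omega''|_{\hat P_0}+\omega''|_{P_{B'}\smallsetminus\hat P_0}$ with both summands closed. Moreover, on $\mathrm{Esc}^c$ the support of $\omega|_{\hat P_0}$ lies in $P_{B'}$: the frustrated plaquettes of $\omega$ inside $\hat P_0$ all belong to the $\mathcal{G}(\omega,\omega'')$-cluster of $P_0$, and since $\support\omega''\subseteq P_{B'}$, one of them lying in $P_B\smallsetminus P_{B'}$ would witness $\mathrm{Esc}$. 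Hence on $\mathrm{Esc}^c$ the two summands of $\zeta'$ have disjoint support and are closed forms supported in $P_{B'}$, so $\zeta'\in\Sigma_{P_{B'}}$; and $\zeta'|_{\hat P_0}=\omega|_{\hat P_0}=\zeta|_{\hat P_0}$, which together with the fact that every frustrated plaquette of $P_0$ (in $\omega$ or $\omega''$) lies in $\hat P_0$ gives $\zeta|_{P_0}=\zeta'|_{P_0}$ on $\mathrm{Esc}^c$. Thus $\{\zeta|_{P_0}\neq\zeta'|_{P_0}\}\subseteq\mathrm{Esc}$.

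The heart of the proof is verifying the marginals. That $\zeta\sim\mu_{B,\beta}$ is immediate; the claim is $\zeta'\sim\mu_{B',\beta}$. As in Lemma~\ref{lemma: disconnected expectations}, $\hat P_0$ is \emph{self-certifying}: for each set $S$ the event $\{\hat P_0=S\}$ depends on $(\omega,\omega'')$ only through $(\omega|_S,\omega''|_S)$; and since $\mathcal{G}(\omega,\omega'')$ — hence also $\hat P_0$ and $\mathrm{Esc}$ — depends on $(\omega,\omega'')$ only through $\support\omega\cup\support\omega''$, the event $\{\hat P_0=S\}\cap\mathrm{Esc}^c$ is invariant under interchanging $\omega|_S$ and $\omega''|_S$. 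Using this, the factorisation property of $\phi_\beta$ (Lemma~\ref{lemma: factorization}), and the formula $\mu_{B,\beta}(\omega)=\phi_\beta(\omega)/Z_B$ with $Z_B=\sum_{\omega\in\Sigma_{P_B}}\phi_\beta(\omega)$ (and similarly for $B'$) from \eqref{eq: mu using phi}, I would fix $\chi\in\Sigma_{P_{B'}}$, decompose $\mathbb{P}(\zeta'=\chi)$ over $\mathrm{Esc}$ and over the value $S$ of $\hat P_0$ on $\mathrm{Esc}^c$, factor out $\phi_\beta(\chi)$ via $\phi_\beta(\chi)=\phi_\beta(\chi|_S)\phi_\beta(\chi|_{P_{B'}\smallsetminus S})$ and $\phi_\beta(\omega)=\phi_\beta(\omega|_S)\phi_\beta(\omega|_{P_B\smallsetminus S})$, and recognise the surviving bracket as $\sum_{\omega\in\Sigma_{P_B}}\phi_\beta(\omega)=Z_B$: the $\mathrm{Esc}$-part contributes $\sum_{\omega:\,\mathrm{Esc}(\omega,\chi)}\phi_\beta(\omega)$, while the $\mathrm{Esc}^c$-part, after the swap of the two roles of the $S$-supported configuration, becomes $\sum_{\omega:\,\neg\mathrm{Esc}(\omega,\chi)}\phi_\beta(\omega)$ (classifying such $\omega$ by $S=\hat P_0(\omega,\chi)$ and using factorisation once more). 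Hence $\mathbb{P}(\zeta'=\chi)=\phi_\beta(\chi)/Z_{B'}=\mu_{B',\beta}(\chi)$.

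Finally, since $(\zeta|_{P_0},\zeta'|_{P_0})$ is a coupling of the two restricted laws with $\{\zeta|_{P_0}\neq\zeta'|_{P_0}\}\subseteq\mathrm{Esc}$, and $\mathbb{P}(\mathrm{Esc})=\mu_{B,\beta}\times\mu_{B',\beta}\bigl(\{P_0\leftrightarrow(P_B\smallsetminus P_{B'})\text{ in }\mathcal{G}(\omega,\omega')\}\bigr)$, the stated bound follows. The main obstacle is the marginal computation for $\zeta'$: it requires carefully invoking the self-certifying property of $\hat P_0$, using the Bianchi lemma to guarantee that the relevant restrictions are closed so that the factorisation property applies cleanly, and exploiting that all the events in play depend on the two configurations only through the union of their supports, which is precisely what legitimises the symmetry step; the remainder is bookkeeping with $\phi_\beta$ and the partition functions $Z_B,Z_{B'}$.
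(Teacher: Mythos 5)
Your proposal is correct, and it rests on the same core idea as the paper's proof: the coupling characterisation of total variation distance together with a cluster-exploration/resampling construction whose disagreement event on \( P_0 \) is contained in \( \{P_0 \leftrightarrow P_B\smallsetminus P_{B'}\} \) for an independent pair. The implementation is dual to the paper's, though. The paper explores the halo of the cluster of the \emph{boundary} region \( P_B\smallsetminus P_{B'} \) and sets \( \hat\omega' := \omega'|_{\hat P}+\omega|_{P_{B'}\smallsetminus\hat P} \) unconditionally, which works because the complement of that cluster automatically lies in \( P_{B'} \); you instead explore the cluster of \( P_0 \) and must restrict the resampling to \( \mathrm{Esc}^c \) precisely so that \( \omega|_{\hat P_0} \) is supported in \( P_{B'} \) — a point you correctly identify and justify. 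Your verification of the marginal \( \zeta'\sim\mu_{B',\beta} \) via the weight-preserving swap involution on \( \{\hat P_0=S\}\cap\mathrm{Esc}^c \) is more explicit than the paper's brief appeal to the conditional distributional identity for the unexplored region, and it is sound: the graph (hence \( \hat P_0 \) and \( \mathrm{Esc} \)) depends on the pair only through the union of supports, the restrictions to the halo are closed by the Bianchi lemma so the factorisation property of \( \phi_\beta \) applies, and the involution preserves the product weight. Both routes yield exactly the stated bound; the paper's is marginally shorter, while yours makes the measure-preservation step airtight.
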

    
\begin{proof} 
    Given \( \omega  \in \Sigma_{P_B} \) and \( \omega' \in \Sigma_{P_{B'}} \), let  
    \begin{equation*}
        \hat P \coloneqq \{ p \in P_{B'} \colon \{ p\} \leftrightarrow P_B\smallsetminus P_{B'} \text{ in } \mathcal{G}(\omega ,\omega') \}
    \end{equation*}
    (see Figure~\ref{fig: b}).
    %
    Next, define  
    \begin{equation*}
        \hat{\hat P}
        \coloneqq
        (P_B\smallsetminus P_{B'}) \cup \bigl\{ p \in P_{B'} \colon \partial ( \hat \partial p) \leftrightarrow P_B \smallsetminus P_{B'} \text{ in } \mathcal{G}( \omega, \omega')\bigr\}.
    \end{equation*} 
    (see Figure~\ref{fig: c}) and note that \( P_B\smallsetminus P_{B'} \subseteq \hat{\hat P} \) and that
    \[
        \support \omega|_{\hat{\hat P}}  \cup \support \omega'|_{\hat{\hat P}}  = \hat P.
    \]  
    \begin{figure}[htb]
     \centering
     \begin{subfigure}[b]{0.3\textwidth}
        \centering
        \begin{tikzpicture}[scale=0.7] 
    
            \fill[fill = SpringGreen, fill opacity=0.3] (-3.3,1.6) circle (1.3mm); 
            \fill[fill = SpringGreen, fill opacity=0.3] (-3.5,-1.1) circle (1.4mm); 
            \fill[fill = SpringGreen, fill opacity=0.3] (-2.9,1.2) circle (2.5mm); 
            \fill[fill = SpringGreen, fill opacity=0.3] (-1.8,-0.4) circle (3.5mm);
            \fill[fill = SpringGreen, fill opacity=0.3] (-1,0.2) circle (4mm); 
            \fill[fill = SpringGreen, fill opacity=0.3] (-.55,1.8) circle (3mm);
            \fill[fill = SpringGreen, fill opacity=0.3] (-0.3,-0.9) circle (4mm);
            \fill[fill = SpringGreen, fill opacity=0.3] (0,2.5) circle (4mm);
            \fill[fill = SpringGreen, fill opacity=0.3] (0.35,-1.2) circle (2.5mm);
            \fill[fill = SpringGreen, fill opacity=0.3] (2,0.3) circle (4mm); 
    
            \fill[fill = SkyBlue, fill opacity=0.3] (-2.2,0.5) circle (2.5mm);
            \fill[fill = SkyBlue, fill opacity=0.3] (-1.3,-0.2) circle (3.5mm);
            \fill[fill = SkyBlue, fill opacity=0.3] (-0.4,-0.4) circle (3mm);  
            \fill[fill = SkyBlue, fill opacity=0.3] (0.0,1.2) circle (2.5mm);
            \fill[fill = SkyBlue, fill opacity=0.3] (-0.1,0.5) circle (2mm);
            \fill[fill = SkyBlue, fill opacity=0.3] (0.5,-0.1) circle (5mm);
            \fill[fill = SkyBlue, fill opacity=0.3] (0.8,1.0) circle (4.5mm);
            \fill[fill = SkyBlue, fill opacity=0.3] (1.3,-0.5) circle (3.1mm);  
    
            \draw[semithick] (-4,-1.5) -- (3,-1.5) -- (3,3) -- (-4,3) -- (-4,-1.5);
            \draw[] (-3,-1) -- (2,-1) -- (2,2.5) -- (-3,2.5) -- (-3,-1);
    
            \draw (-4,3) node[anchor=north west] {\small \( B \)};
            \draw (-3,2.5) node[anchor=north west] {\small \( B' \)};
    
            \draw[semithick] (-4,-1.5) -- (3,-1.5) -- (3,3) -- (-4,3) -- (-4,-1.5);
            \draw[] (-3,-1) -- (2,-1) -- (2,2.5) -- (-3,2.5) -- (-3,-1);
    
            \draw (-4,3) node[anchor=north west] {\small \( B \)};
            \draw (-3,2.5) node[anchor=north west] {\small \( B' \)};
            
            \end{tikzpicture}
            \caption{}\label{fig: a}
    \end{subfigure}
    \hfil
    \begin{subfigure}[b]{0.3\textwidth}
        \begin{tikzpicture}[scale=0.7]

            \fill[fill = SpringGreen, fill opacity=0.3] (-3.3,1.6) circle (1.3mm);
            \fill[fill = SpringGreen, fill opacity=0.3] (-3.5,-1.1) circle (1.4mm);
            \fill[fill = SpringGreen, fill opacity=0.3] (-1.8,-0.4) circle (3.5mm);
            \fill[fill = SpringGreen, fill opacity=0.3] (-1,0.2) circle (4mm); 
            \fill[fill = SpringGreen, fill opacity=0.3] (-.55,1.8) circle (3mm);
    
            \fill[fill = SkyBlue, fill opacity=0.3] (-2.2,0.5) circle (2.5mm);
            \fill[fill = SkyBlue, fill opacity=0.3] (-1.3,-0.2) circle (3.5mm);
            \fill[fill = SkyBlue, fill opacity=0.3] (0.0,1.2) circle (2.5mm);
            \fill[fill = SkyBlue, fill opacity=0.3] (-0.1,0.5) circle (2mm);
            \fill[fill = SkyBlue, fill opacity=0.3] (0.5,-0.1) circle (5mm);
            \fill[fill = SkyBlue, fill opacity=0.3] (0.8,1.0) circle (4.5mm);
            \fill[fill = SkyBlue, fill opacity=0.3] (1.3,-0.5) circle (3.1mm);
      
            \fill[fill = SpringGreen, fill opacity=0.3] (-2.9,1.2) circle (2.5mm); 
            \fill[fill = SpringGreen, fill opacity=0.3] (-0.3,-0.9) circle (4mm);
            \fill[fill = SpringGreen, fill opacity=0.3] (0,2.5) circle (4mm);
            \fill[fill = SpringGreen, fill opacity=0.3] (0.35,-1.2) circle (2.5mm);
            \fill[fill = SpringGreen, fill opacity=0.3] (2,0.3) circle (4mm);  
    
            \fill[fill = SkyBlue, fill opacity=0.4] (-0.4,-0.4) circle (3mm);   
    
            \begin{scope} 
                \clip (-2.9,1.2) circle (2.5mm) -- cycle
                    (-0.3,-0.9) circle (4mm) -- cycle
                    (0,2.5) circle (4mm) -- cycle
                    (0.35,-1.2) circle (2.5mm) -- cycle
                    (2,0.3) circle (4mm) -- cycle
                    (-0.4,-0.4) circle (3mm) -- cycle;
              
                \fill[red] (-3,-1) -- (-3,2.5) -- (2,2.5) -- (2,-1) -- (-3,-1); 
            \end{scope} 
    
            \draw[semithick] (-4,-1.5) -- (3,-1.5) -- (3,3) -- (-4,3) -- (-4,-1.5);
            \draw[] (-3,-1) -- (2,-1) -- (2,2.5) -- (-3,2.5) -- (-3,-1);
    
            \draw (-4,3) node[anchor=north west] {\small \( B \)};
            \draw (-3,2.5) node[anchor=north west] {\small \( B' \)};
            
        \end{tikzpicture}
        \caption{}\label{fig: b}
    \end{subfigure} 
    \hfil
    \begin{subfigure}[b]{0.3\textwidth}
        \begin{tikzpicture}[scale=0.7]
    
            \draw[semithick] (-4,-1.5) -- (3,-1.5) -- (3,3) -- (-4,3) -- (-4,-1.5);
            \draw[] (-3,-1) -- (2,-1) -- (2,2.5) -- (-3,2.5) -- (-3,-1);
    
            \draw (-4,3) node[anchor=north west] {\small \( B \)};
            \draw (-3,2.5) node[anchor=north west] {\small \( B' \)};

            \fill[fill = SpringGreen, fill opacity=0.3] (-3.3,1.6) circle (1.3mm);
            \fill[fill = SpringGreen, fill opacity=0.3] (-3.5,-1.1) circle (1.4mm);
            \fill[fill = SpringGreen, fill opacity=0.3] (-1.8,-0.4) circle (3.5mm);
            \fill[fill = SpringGreen, fill opacity=0.3] (-1,0.2) circle (4mm); 
            \fill[fill = SpringGreen, fill opacity=0.3] (-.55,1.8) circle (3mm);
    
            \fill[fill = SkyBlue, fill opacity=0.3] (-2.2,0.5) circle (2.5mm);
            \fill[fill = SkyBlue, fill opacity=0.3] (-1.3,-0.2) circle (3.5mm);
            \fill[fill = SkyBlue, fill opacity=0.3] (0.0,1.2) circle (2.5mm);
            \fill[fill = SkyBlue, fill opacity=0.3] (-0.1,0.5) circle (2mm);
            \fill[fill = SkyBlue, fill opacity=0.3] (0.5,-0.1) circle (5mm);
            \fill[fill = SkyBlue, fill opacity=0.3] (0.8,1.0) circle (4.5mm);
            \fill[fill = SkyBlue, fill opacity=0.3] (1.3,-0.5) circle (3.1mm);
    
            \fill[fill = SpringGreen, fill opacity=0.3] (-2.9,1.2) circle (2.5mm); 
            \fill[fill = SpringGreen, fill opacity=0.3] (-0.3,-0.9) circle (4mm);
            \fill[fill = SpringGreen, fill opacity=0.3] (0,2.5) circle (4mm);
            \fill[fill = SpringGreen, fill opacity=0.3] (0.35,-1.2) circle (2.5mm);
            \fill[fill = SpringGreen, fill opacity=0.3] (2,0.3) circle (4mm); 
            \fill[fill = SkyBlue, fill opacity=0.4] (-0.4,-0.4) circle (3mm);   
    
            \fill[fill = red]   
            (-4,-1.5) -- (3,-1.5) -- (3,3) -- (-4,3) -- cycle
            (-3,-1) -- (-3,2.5) -- (2,2.5) -- (2,-1) -- cycle
            (-2.9,1.2) circle (2.5mm) -- cycle
            (-0.3,-0.9) circle (4mm) -- cycle
            (0,2.5) circle (4mm) -- cycle
            (0.35,-1.2) circle (2.5mm) -- cycle
            (2,0.3) circle (4mm) -- cycle
            (-0.4,-0.4) circle (3mm);    
    
            \draw[semithick] (-4,-1.5) -- (3,-1.5) -- (3,3) -- (-4,3) -- (-4,-1.5);
            \draw[] (-3,-1) -- (2,-1) -- (2,2.5) -- (-3,2.5) -- (-3,-1);
    
                \draw (-4,3) node[anchor=north west] {\small \( B \)};
                \draw (-3,2.5) node[anchor=north west] {\small \( B' \)};
    
            \end{tikzpicture} 
            \caption{}\label{fig: c}
        \end{subfigure}

        \caption{In the three figures above we let the green and blue disks represent the connected components of \( \mathcal{G}(\omega,0) \) and \( \mathcal{G}(0,\omega') \) respectively for some \( \omega \in \Sigma_{P_B} \) and \( \omega' \in \Sigma_{P_{B'}} \). The clusters of partially overlapping disks correspond to connected components in \( \mathcal{G}(\omega,\omega') \). In Figure~\ref{fig: b} the red area corresponds to the set \( \smash{\hat P} \), and in Figure~\ref{fig: c} the red area corresponds to the set \( \smash{\hat{\hat P}} \).}
        \label{figure: disks}
    \end{figure}
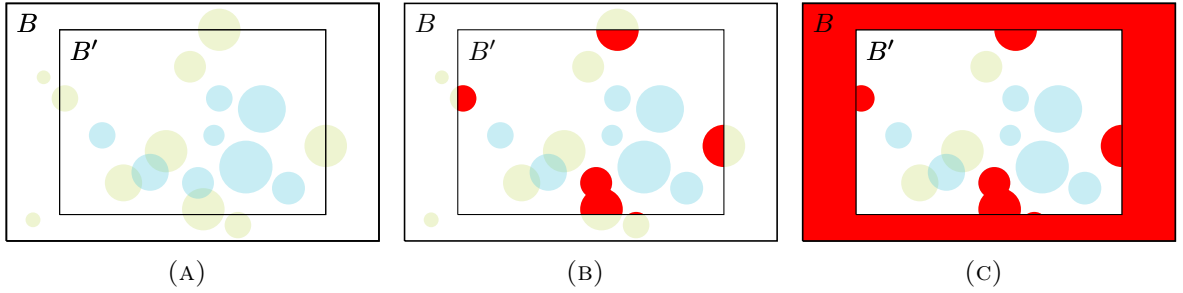%
    Note also that the set \( \hat P \) can be determined without looking outside \( \hat {\hat P} \) in \( \omega \) and \( \omega' \). Together, these observations imply that if we let \( \omega \sim \mu_{B,\beta} \) and \( \omega' \sim \mu_{B',\beta} \) be independent, then, conditioned on \( \hat P = \hat P(\omega,\omega')\), we have
    \begin{equation}\label{eq: coupling req old}
         \omega |_{P_{B'} \smallsetminus \hat P} 
         \overset{d}{=}  
         \omega'|_{P_{B'} \smallsetminus \hat P}.
    \end{equation}
    We now use this observation to construct a coupling  \( (\hat \omega,\hat \omega') \) of \( \hat \omega \sim \mu_{B,\beta} \) and \( \hat \omega' \sim \mu_{B',\beta} \).
    To this end, let \( \omega \sim \mu_{B,\beta} \) and \( \omega' \sim \mu_{B',\beta} \) be independent.
    Given \( \hat P = \hat P(\omega,\omega') \), define 
    \begin{equation*}
        \begin{cases} 
            \hat \omega \coloneqq  \omega\cr
            \hat \omega' \coloneqq \omega'|_{\hat P} + \omega|_{P_{B'}\smallsetminus \hat P}.
        \end{cases}
    \end{equation*}
    By~\eqref{eq: coupling req old}, we have  
    \begin{equation}\label{eq: coupling req new}
        \hat \omega |_{P_{B'} \smallsetminus \hat P} \overset{d}{=} \hat \omega'|_{P_{B'} \smallsetminus \hat P},
    \end{equation}
    and hence \( \hat \omega' \sim \mu_{B',\beta} \). Since \( \hat \omega \sim \mu_{B,\beta} \) by definition, it follows that \( (\hat \omega, \hat \omega') \) is a coupling of \( \hat \omega \sim \mu_{B,\beta} \) and \( \hat \omega' \sim \mu_{B',\beta} \).
    By the coupling characterization of total variation distance, we therefore have
    \begin{equation*}
        \begin{split}
            &\dist_{TV} ( \omega|_{P_0},  \omega'|_{P_0}) 
            \leq 
            \mu_{B,\beta}\times \mu_{B',\beta}\Bigl( \bigl\{ \omega \in \Sigma_{P_B} ,\, \omega' \in \Sigma_{B',\beta}\colon
            \hat \omega |_{P_0} \neq \hat \omega'|_{P_0}\bigr\} \Bigr).
        \end{split}
    \end{equation*}
    If \( \hat \omega |_{P_0} \neq \hat \omega'|_{P_0}  \), then
    \begin{equation*}
        P_0 \leftrightarrow \hat P_B \smallsetminus P_{B'},
    \end{equation*} 
    and hence it follows that
    \begin{equation*}
        \begin{split}
            &\dist_{TV} ( \omega |_{P_0},  \omega'|_{P_0})
            \leq  
            \mu_{B,\beta}\times \mu_{B,\beta}\Bigl( \bigl\{ \omega ,\omega' \in \Sigma_{P_B} \colon
            P_0 \leftrightarrow (P_B\smallsetminus P_{B'}) \text{ in }   \mathcal{G}(\omega ,\omega') \bigr\} \Bigr).
        \end{split}
    \end{equation*}
    This completes the proof.
\end{proof}

We now use the previous lemma to give a proof of Theorem~\ref{theorem: chatterjees correlation result}.

\begin{proof}[Proof of Theorem~\ref{theorem: chatterjees correlation result}]
    By Lemma~\ref{lemma: total variation bound}, we have
    \begin{equation*}
        \begin{split}
            &\dist_{TV} ( \omega|_{P_0},  \omega'|_{P_0})  
            \leq  
            \mu_{B,\beta}\times \mu_{B',\beta}\Bigl( \bigl\{ \omega \in \Sigma_{P_B}, \omega' \in \Sigma_{P_{B'}} \colon
             P_0 \leftrightarrow (P_B\smallsetminus P_{B'}) \text{ in }   (\omega,\omega')) \bigr\} \Bigr).
        \end{split}
    \end{equation*}
    On the other hand, by Proposition~\ref{proposition: probability of connected vs probability of path}, we have
    \begin{equation*}
        \begin{split}
            &
            \mu_{B,\beta}\times \mu_{B',\beta}\Bigl( \bigl\{ \omega \in \Sigma_{P_B}, \omega' \in \Sigma_{P_{B'}} \colon
             P_0 \leftrightarrow (P_B\smallsetminus P_{B'})  \text{ in }   (\omega,\omega') \bigr\} \Bigr) 
            \\&\qquad \leq 
            C_1 |\P_0|  \bigl( C_2 \alpha(\beta) \bigr)^{ \dist_{B,B'}(P_0,P_B\smallsetminus P_{B'}) }.
        \end{split}
    \end{equation*}
    Combining the previous equations, we thus obtain
    \begin{align*}
        &\dist_{TV} ( \omega|_{P_0},  \omega'|_{P_0})   
        \leq 
         C_1  | P_0| \bigl( C_2 \alpha(\beta) \bigr)^{ \dist_{B,B'}(P_0,P_B\smallsetminus P_{B'}) } 
    \end{align*}
    as desired.
\end{proof}

\section{The expected spin at a plaquette}\label{sec: expected spin}

In this section, we give proofs of Theorem~\ref{theorem: typical spin at plaquette} and Theorem~\ref{theorem: correlation length ii}.

\begin{proof}[Proof of Theorem~\ref{theorem: typical spin at plaquette}]
    We first define two useful events.
    Let 
    \begin{equation*}
        A \coloneqq \bigl\{ \sigma \in \Sigma_{E_B} \colon \exists \text{ irreducible } \nu \in \Sigma_{P_B} \text{ with }  (d\sigma)|_{\support \nu} = \nu,\,  p \in \support \nu \text{ and } |(\support \nu)^+|\geq 11  \bigr\},
    \end{equation*}
    and, for \( e \in \partial p \) and \( g \in G  \), let 
    \begin{equation*}
        A_{e,g} \coloneqq \bigl\{ \sigma \in \Sigma_{E_B} \colon \forall  p' \in  \partial e  \text{ we have } (d\sigma)_{ p'} = g 
        \bigr\}.
    \end{equation*}  
    Note that since \( \dist_{B} \bigl( \{p\}, \delta P_B \bigr) > 6 \), these events are all well-defined. Moreover, by Lemma~\ref{lemma: small vortices} and Lemma~\ref{lemma: minimal vortices}, since \( \dist_{B} \bigl( \{p\}, \delta P_B \bigr) > 11 \), we have
    \begin{equation}\label{eq: first event equality}
        \bigl\{ \sigma \in \Sigma_{E_N} \colon (d\sigma)_p \neq 0 \bigr\} = A \cup \bigcup_{e \in \partial p} \bigsqcup_{g \in G\smallsetminus \{ 0 \}} A_{e,g}
    \end{equation}

We now give upper bounds of the \( \mu_{B,\beta}\)-measure of the events defined above.
    To this end, note first that by Proposition~\ref{proposition: vortex probability}, applied with \( P = \{ p,-p \} \) and \( M = 11 \), we have
    \begin{equation}\label{eq: A}
        \mu_{B,\beta}( A) \leq \frac{5^{10} \alpha(\beta)^{11}}{1 - 5\alpha(\beta) } .
    \end{equation}
    If for some \( e \in \partial p \) the event \( A_{e,0}^c \) happen, then there must exist an irreducible  plaquette configuration \( \nu \in \Sigma_{P_B}\) with \( (d\sigma)|_{\support \nu} = \nu \) and \( \support \nu \cap \hat \partial e \neq \emptyset \). From Proposition~\ref{proposition: vortex probability}, applied with \( p' \in \hat \partial e \) and \( M= 6 \),  and a union bound, we obtain
    \begin{equation}\label{eq: Ae0}
        0 \leq 1 - \mu_{B,\beta}(A_{e,0}) \leq \sum_{p' \in \hat \partial e} \frac{5^{5} \alpha(\beta)^{6}}{1 - 5\alpha(\beta) }.
    \end{equation}
    Next, note that if  \( \{ e,e' \} \subseteq \partial p \) and \( g \in G\smallsetminus \{ 0 \} \) then it follows from Proposition~\ref{proposition: vortex probability}, applied with \( P = (\hat \partial e \cup \hat \partial (-e)) \cup (\hat \partial e' \cup \hat \partial (-e'))  \) and \( M = |P^+| = 6+6-1 = 11 \), that
    \begin{equation}\label{eq: Aeeg}
        \sum_{g \in G\smallsetminus \{ 0 \}} \mu_{B,\beta}(A_{e,g} \cap A_{e',g}) \leq
         \frac{ \alpha(\beta)^{11}}{1 - 5\alpha(\beta) } . 
    \end{equation}

    Finally, if we for \( e = dx_j \in \partial p \) and \( g \in G \smallsetminus \{ 0 \} \) define \( \nu^{(e,g)} \) by
    \begin{equation*}
        \nu^{(e,g)}_{e'} \coloneqq d(g \, dx_j)
    \end{equation*}
    then \( d(\nu^{(e,g)})=0 \), \( \support  \nu^{(e,g)}  = \pm \hat \partial e \), and by Lemma~\ref{lemma: minimal vortices}, we have
    \begin{equation*}
        \sigma \in A_{e,g} \Leftrightarrow (d\sigma)|_{\pm \hat \partial e} = \nu^{(e,g)}.
    \end{equation*}
    Moreover, we have
    \begin{equation*}
        \sigma \in A_{e,0} \Leftrightarrow (d\sigma)|_{\pm \hat \partial e} = 0.
    \end{equation*}
    Consequently, we can apply Lemma~\ref{lemma: ratio} with \( \nu = \nu^{(e,g)} \) to obtain
    \begin{equation}\label{eq: g comparison}
        \frac{\mu_{B,\beta}(A_{e,g})}{\mu_{B,\beta}(A_{e,0})} = \frac{\mu_{B,\beta}\bigl( \{ \sigma \in \Sigma_{E_B} \colon (d\sigma)|_{\support \nu^{(e,g)}}=\nu^{(e,g)}\} \bigr)}{\mu_{B,\beta}\bigl( \{ \sigma \in \Sigma_{E_B} \colon (d\sigma)|_{\support \nu^{(e,g)}}=0 \} \bigr)} = \phi_\beta(g)^{12}
    \end{equation}

    We now combine the above equations to obtain~\eqref{eq: spin expectation}. To this end, note first that by the triangle inequality we have
    \begin{align}
        &\biggl| \mathbb{E}_{B,\beta}\Bigl[f\bigl((d\sigma)_p\bigr) \Bigr] - \Bigl( f(0) +
        \sum_{e \in \partial p} \sum_{g \in G} 
         \bigl(f(g)-f(0) \bigr) \, \phi(g)^{12} \Bigr) \biggr| \nonumber
         \\&\qquad 
         =
         \biggl| \mathbb{E}_{B,\beta}\bigl[f\bigl((d\sigma)_p\bigr)-f(0)\bigr] -  
        \sum_{e \in \partial p} \mu_{B,\beta} (A_{e,0})  \sum_{g \in G } 
         \bigl(f(g)-f(0) \bigr) \, \phi(g)^{12}  \nonumber
         \\&\qquad\qquad +  
         \sum_{e \in \partial p} \bigl( \mu_{B,\beta} (A_{e,0}) -1 \bigr) \sum_{g \in G} 
         \bigl(f(g)-f(0) \bigr) \, \phi(g)^{12} 
         \biggr|\nonumber
         \\&\qquad 
         \leq
         \biggl| \mathbb{E}_{B,\beta}[f((d\sigma)_p)-f(0)] -  
        \sum_{e \in \partial p} \mu_{B,\beta} (A_{e,0}) \sum_{g \in G } 
         \bigl(f(g)-f(0) \bigr) \, 
         \phi(g)^{12}   \biggr|\label{eq: first error term}
         \\&\qquad\qquad+ 
         \biggl| 
        \sum_{e \in \partial p} \bigl(1- \mu_{B,\beta} (A_{e,0})\bigr) \sum_{g \in G} 
         \bigl(f(g)-f(0) \bigr) \, 
         \phi(g)^{12}   \biggr|.\label{eq: second error term}
    \end{align}
    To get an upper bound on~\eqref{eq: first error term}, we first rewrite~\eqref{eq: first event equality} as a union of disjoint events;
    \begin{equation}\label{eq: second event equality}
        \{ \sigma \in \Sigma_{E_N} \colon (d\sigma)_p \neq 0 \} = A \sqcup \bigsqcup_{g \in G\smallsetminus \{ 0 \}} \biggl( \bigsqcup_{\substack{E \subseteq \partial p \mathrlap{\colon} \\ E \neq \emptyset}} (A^c  \bigcap_{e' \in E} A_{e',g} \bigcap_{e'' \in \partial p \smallsetminus E} A_{e'',g}^c) \biggr).
    \end{equation}
    Using first~\eqref{eq: g comparison} and then~\eqref{eq: second event equality}, we immediately obtain
    \begin{equation*}\label{eq: part i}
        \begin{split}
            &\Bigl| \mathbb{E}_{B,\beta}\bigl[ f\bigl((d\sigma)_p\bigr)-f(0) \bigr] -   \sum_{e \in \partial p} \mu_{B,\beta} (A_{e,0}) \sum_{g \in G }   \bigl( f(g) - f(0) \bigr) \, \phi_{B,\beta} (g)  \Bigr|
            \\&\qquad\overset{\eqref{eq: g comparison}}{=}
            \Bigl| \mathbb{E}_{B,\beta}\bigl[ f\bigl((d\sigma)_p\bigr)-f(0) \bigr] -  \sum_{e \in \partial p} \sum_{g \in G}   \bigl( f(g) - f(0) \bigr) \, \mu_{B,\beta} (A_{e,g})  \Bigr|
            \\&\qquad \overset{\eqref{eq: second event equality}}{\leq}
            \Bigl( \mu_{B,\beta}(A) + \sum_{\{e_1,e_2 \} \subseteq \partial p} \sum_{g \in G\smallsetminus \{ 0 \}}  \mu_{B,\beta} (A_{e_1,g} \cap A_{e_2,g})   \Bigr) \cdot \max_{g \in G} \bigl|f(g) -f(0)\bigr|   
            \\&\qquad \overset{\eqref{eq: A},\eqref{eq: Aeeg}}{\leq}
            \biggl( \frac{5^{10} \alpha(\beta)^{11}}{1 - 5\alpha(\beta) } +   \binom{|\partial p|}{2}  \frac{ \alpha(\beta)^{11}}{1 - 5\alpha(\beta) } \biggr)\cdot \max_{g \in G} \bigl|f(g) -f(0)\bigr|   
            .
        \end{split}
    \end{equation*}
    To obtain an upper bound on~\eqref{eq: second error term}, we use~\eqref{eq: Ae0} to get  
    \begin{equation*}\label{eq: part ii}
    \begin{split}
        &\biggl|\sum_{e \in \partial p} \bigl(1- \mu_{B,\beta} (A_{e,0})\bigr) \sum_{g \in G} 
         \bigl(f(g)-f(0) \bigr) \, 
         \phi(g)^{12}   \biggr|
         \\&\qquad\leq
         \sum_{e \in \partial p} \sum_{p' \in \hat \partial e} \frac{5^{5} \alpha(\beta)^{6}}{1 - 5\alpha(\beta) } \cdot \max_{g \in G} \bigl|f(g) -f(0)\bigr| \cdot \sum_{g \in G\smallsetminus \{ 0 \}} \phi_\beta(g)^{12} .
    \end{split}
    \end{equation*} 
    Finally, note that \( \binom{|\partial p|}{2} = 6 \),  that \( \sum_{e\in \partial p} |\hat \partial e| = 4 \cdot 6 \), and that
    \begin{equation*}
        \sum_{g \in G\smallsetminus \{ 0 \}} \phi_\beta(g)^{12} \leq \bigl(\sum_{g \in G\smallsetminus \{ 0 \}} \phi_\beta(g)^{2} \bigr)^6 = \alpha(\beta)^6 
    \end{equation*}
    Combining the above equations, we thus get
    \begin{equation*}
        \begin{split}
            &\biggl| \mathbb{E}_{B,\beta}\Bigl[f\bigl((d\sigma)_p\bigr) \Bigr] - \Bigl( f(0) +
        \sum_{e \in \partial p} \sum_{g \in G} 
         \bigl(f(g)-f(0) \bigr) \, \phi(g)^{12} \Bigr) \biggr|
         \\&\qquad\leq 
         \biggl( \frac{5^{10} \alpha(\beta)^{11}}{1 - 5\alpha(\beta) } +   
         \frac{ 6\alpha(\beta)^{11}}{1 - 5\alpha(\beta) }
         +
         \frac{24 \cdot 5^{5} \alpha(\beta)^{12}}{1 - 5\alpha(\beta) }  
        \biggr)\cdot \max_{g \in G} \bigl|f(g) -f(0)\bigr|.
        \end{split}
    \end{equation*}
    Since \( 5 \alpha(\beta)< 1 \), we have \( 5^{10} + 6 + 24 \cdot 5^{5} \alpha(\beta) < 5^{11}\), and hence the desired conclusion follows. 
\end{proof}

\begin{proof}[Proof of Theorem~\ref{theorem: correlation length ii}]
    Note first that, by definition, we have
    \begin{equation*}
        \mathbb{E}_{B,\beta}\bigl[f_1\bigl((d\sigma)_{p_1}\bigr)f_2\bigl((d\sigma)_{p_2}\bigl)\bigr] = \Cov \Bigl( f_1\bigl((d\sigma)_{p_1}\bigr),f_2\bigl((d\sigma)_{p_2}\bigr) \Bigr) + \mathbb{E}_{B,\beta}\bigl[\rho\bigl((d\sigma)_{p_1}\bigr)\bigr]\mathbb{E}_{B,\beta}\bigl[\rho\bigl((d\sigma)_{p_2}\bigr)\bigr].
    \end{equation*}
    Consequently, by the triangle inequality we have
    \begin{align*}
        &\biggl| \mathbb{E}_{B,\beta}\bigl[f_1\bigl((d\sigma)_{p_1}\bigr)f_2\bigl((d\sigma)_{p_2}\bigr)\bigr] - \prod_{j\in\{1,2 \}}\Bigl( f_j(0) + \sum_{e \in \partial p_j}  \sum_{g \in G} \bigl(f_j(g)-f_j(0) \bigr) \, \phi_\beta(g)^{12}  \Bigr) \biggr|
        \\&\qquad\leq
        \biggl| \Cov \Bigl( f_1\bigl((d\sigma)_{p_1}\bigr),f_2\bigl((d\sigma)_{p_2}\bigr) \Bigr) \biggr| 
        \\&\qquad\qquad + 
        \biggl| \mathbb{E}_{B,\beta}\bigl[f_1\bigl((d\sigma)_p\bigr) \bigr] \mathbb{E}_{B,\beta}\bigl[f_2\bigl((d\sigma)_{p'}\bigr)\bigr] - \prod_{j\in\{1,2 \}}\Bigl( f_j(0) + \sum_{e \in \partial p}  \sum_{g \in G} \bigl(f_j(g)-f_j(0) \bigr) \, \phi_\beta(g)^{12}  \Bigr) \biggr|.
    \end{align*}
    For the second term, note that for any \( x,y,a,b \in \mathbb{C} \), we have \( |xy-ab| \leq |(x-a)(y-b)| + |a(y-b)| + |b(x-a)| \). Using this inequality and Theorem~\ref{theorem: typical spin at plaquette}, we obtain
    \begin{align*}
        &
        \biggl| \mathbb{E}_{B,\beta}\bigl[f_1\bigl((d\sigma)_{p_1}\bigr) \bigr] \mathbb{E}_{B,\beta}\bigl[f_2\bigl((d\sigma)_{p_2}\bigr)\bigr] - \prod_{j\in\{1,2 \}}\Bigl( f_j(0) + \sum_{e \in \partial p_j}  \sum_{g \in G} \bigl(f_j(g)-f_j(0) \bigr) \, \phi_\beta(g)^{12}  \Bigr) \biggr|
        \\&\qquad\leq
        \prod_{j \in \{0,1 \}}  \frac{\bigl( 5 \alpha(\beta)\bigr)^{11}}{1 - 5\alpha(\beta) } 
            \cdot \max_{g \in G } \bigl|f_j(g) -f_j(0)\bigr|
            \\&\qquad\qquad +
            \sum_{j \in \{0,1 \}}  \frac{ \bigl( 5 \alpha(\beta)\bigr)^{11}}{1 - 5\alpha(\beta) } 
            \cdot \max_{g \in G } \bigl|f_j(g) -f_j(0)\bigr| \cdot \max_{g \in G } \bigl|f_{1-j}(g)\bigr|.
    \end{align*}
    Combining the previous equations and using Theorem~\ref{theorem: correlation length}, we get
    \begin{align*}
        &\biggl| \mathbb{E}_{B,\beta}\bigl[f_1\bigl((d\sigma)_{p_1}\bigr)f_2\bigl((d\sigma)_{p_2}\bigr)\bigr] - \prod_{j\in\{1,2 \}}\Bigl( f_j(0) + \sum_{e \in \partial p}  \sum_{g \in G} \bigl(f_j(g)-f_j(0) \bigr) \, \phi_\beta(g)^{12}  \Bigr) \biggr|
        \\&\qquad\leq
        C_1 \| f_1 \|_\infty \| f_2\|_\infty \bigl( C_2 \alpha(\beta)\bigr)^{ \dist_{B}(\{ p_1 \},\{ p_2 \}) } 
         +
         \biggl(  \frac{\bigl( 5 \alpha(\beta)\bigr)^{11}}{1 - 5\alpha(\beta) } \biggr)^2 \prod_{j \in \{0,1 \}} 
             \max_{g \in G } \bigl|f_j(g) -f_j(0)\bigr|
            \\&\qquad\qquad +
              \frac{\bigl( 5 \alpha(\beta)\bigr)^{11}}{1 - 5\alpha(\beta) } 
             \sum_{j \in \{0,1 \}} \max_{g \in G } \bigl|f_j(g) -f_j(0)\bigr| \cdot \max_{g \in G } \bigl|f_{1-j}(g)\bigr|.
    \end{align*}
    Noting that, for \( j \in \{ 0,1 \} \), we have
    \begin{equation*}
         \max_{g \in G } \bigl|f_j(g) -f_j(0)\bigr| \leq 2\max_{g \in G } \bigl|f_j(g)\bigr|,
    \end{equation*}
    we obtain~\eqref{eq: eq in last theorem} as desired.
\end{proof}

\medskip

\paragraph{\textbf{Acknowledgements.}}
The author acknowledges support from the Knut and Alice Wallenberg Foundation and from the Swedish Research Council, Grant No. 2015-05430. The author would also like to thank Fredrik Viklund and Jonatan Lenells for comments on the content of this manuscript.

\end{document}